\renewcommand\section{\@startsection {section}{1}{\z@}
{-30pt \@plus -1ex \@minus -.2ex}
{2.3ex \@plus.2ex}
{\normalfont\normalsize\bfseries\boldmath}}
\renewcommand\subsection{\@startsection{subsection}{2}{\z@}
{-3.25ex\@plus -1ex \@minus -.2ex}
{1.5ex \@plus .2ex}
{\normalfont\normalsize\bfseries\boldmath}}
\renewcommand{\@seccntformat}[1]{\csname the#1\endcsname. }
\newtheorem{theorem}{Theorem}
\newtheorem{lemma}[theorem]{Lemma}
\theoremstyle{definition}
\newtheorem{definition}[theorem]{Definition}
\newtheorem*{notation}{Notation}
\newtheorem*{remark}{Remark}
\begin{document}

\begin{center}
{\uppercase{\bf On the distribution of monochromatic complete subgraphs and
arithmetic progressions}}

\vskip 10pt

{\bf Aaron Robertson}\footnote{Principal corresponding author; \texttt{arobertson@colgate.edu}},
{\bf William Cipolli}\footnote{\texttt{wcipolli@colgate.edu}}, and 
{\bf Maria Dasc\u{a}lu}\footnote{Undergraduate student, \texttt{mdascalu@colgate.edu}}

\textit{Department of Mathematics, Colgate University, Hamilton, New York}

\end{center}

\begin{abstract} 
We investigate the distributions of the number of: (1) monochromatic complete
subgraphs over edgewise 2-colorings of complete graphs; and
(2) monochromatic arithmetic progressions over 2-colorings of intervals, as
statistical Ramsey theory questions. We present convincing
evidence that both distributions are very well-approximated
by the   Delaporte distribution.
\end{abstract}

\section{Introduction}
Ramsey theory deals with finding order among chaos, two  fundamental results espousing this being Ramsey's Theorem and van der Waerden's Theorem. Ramsey's Theorem,
in particular, proves the existence, for any $k \in \mathbb{Z}^+$, of a minimal positive integer $R(k)$ such that every 2-coloring of the edges of a complete graph on $R(k)$ vertices contains a monochromatic complete subgraph on $k$ vertices. Van der Waerden's Theorem, in particular, states that there exists a least positive integer $w(k)$ such that every $2$-coloring of $\{1, 2, \dots, w(k)\}$ contains a monochromatic arithmetic progression of length $k$.

While the definitions of both Ramsey and van der Waerden numbers are simple, the computations of both, especially Ramsey numbers, are notoriously difficult. 
For examples, the most recent Ramsey number was determined by McKay and Radziszowksi \cite{MR}, who used almost 10 years of cpu time;
Kouril \cite{K} used over 200 processors and 250 days
to show that $w(6)=1132$. Given the exponential nature of these numbers, the remaining unknown numbers seem intractable at the present time.
Given the difficulty of computing Ramsey numbers exactly we explore the
potential value of a statistical approach with an ultimate goal of gaining some insight into $R(k)$ and $w(k)$. Starting with Ramsey numbers, let all edgewise $2$-colorings of the
complete graphs on $n$
vertices be equally likely and define $X_k=X_k(n)$ as the random variable giving the total number of monochromatic subgraphs on $k$ vertices (i.e, $K_k$).
Our goal is to find a very good approximation for the probability mass function (pmf) of
$X_k$.

In \cite{G}, it is shown that
$X_k$ is asymptotically Poisson as $k \rightarrow \infty$ (with certain conditions on $n$ and $k$); that is, with an appropriate restriction on $n$, as $k \rightarrow \infty$ we have
$\mathbb{P}(X_k =j) \approx \frac{\lambda^j e^{-\lambda}}{j!}$, where
$\lambda = \frac{{n \choose k}}{2^{{k \choose 2}-1}}$.
However, since this is an asymptotic (in $k$) result, using this for
small values of $k$ is not appropriate.  In this article, we present (hopefully very convincing) evidence
of what the distribution of $X_k$ for small $k$ may be.

\section{Sampling Algorithm for 2-Colored \boldmath $K_n$}
Given a user input of positive integers $n$, $k$, and $g$, our Python program {\tt GraphCount}\footnote[2]{Available at {\tt http://www.aaronrobertson.org}.} generates $g$ graphs, each on $n$ vertices, using an adjacency list. It colors the edges between pairs of vertices randomly using the Python random module. It then counts the total number of monochromatic complete subgraphs on
$k$ vertices of each given graph.  Compiling all results will give
us an empirical pmf.

Algorithm 1 is recursive  with a base case of $k=3$. For $k=3$, the Triangle-Counting Algorithm (Algorithm 2) is used. 

\IncMargin{1em}
\begin{algorithm} 
\BlankLine
\SetKwInOut{Input}{input}\SetKwInOut{Output}{output}
\Input{List of edge colorings of graph and $k$}
\Output{List $\mathcal{F}$ of  vertices of monochromatic  cliques of size $k$}
Run Algorithm 1/2 with $k-1$; call the output  subcliques\;
\For{every vertex $A$ in the graph, starting from $A=k$}
{\If{$A$ can support a clique of size $k$;}{
\For{each subclique $S$}{
ensure   $A > i$ for every vertex $i$ in $S$\;

\If{the subclique and $A$ form a monochromatic clique of size $k$}{
create a tuple of the subclique and $A$\;
add the tuple to $\mathcal{F}$\;
}
}
}}
return   $\mathcal{F}$\;
\BlankLine
\end{algorithm}
\vspace*{-20pt}
\centerline{\small {\bf Algorithm 1:} Recursive Counting Algorithm}

\vskip 20pt
{\tt GraphCount} has a run-time of  $O(n^2)$ for $k=3$ and $O(n^k)$ for $k \ge 4$. Table 1
 compiles a list of approximate run-times.

\begin{algorithm}[t!] 
\BlankLine

\SetKwInOut{Input}{input}\SetKwInOut{Output}{output}
\Input{$2$-colored graph}
\Output{List $\mathcal{F}$ of vertices of monochromatic triangles}
\For{every pair of vertices $A<B$ of the graph}{
determine the color $c$ connecting $A$ and $B$\;
let $\mathcal{N}$ be the set of neighbors common to $A$ and $B$\;
\For{every $C \in \mathcal{N}$}{
\If{color of edge between $A$ and $C$ is c and color of edge between $B$ and $C$ is $c$}{
add ($A$, $B$, $C$) to $\mathcal{F}$\;}
}}
return  $\mathcal{F}$\;
\BlankLine
\end{algorithm}

\newpage
\vspace*{-40pt}
 \centerline{\small   {\bf Algorithm 2:} Triangle-Counting Algorithm}
\vskip 20pt

Our goal is to run {\tt GraphCount} with $g \geq 1,000,000$ in order to
obtain an empirical probability mass function that is a fairly good approximation of the probability mass function.  However, this will only
allow us (given reasonable time constraints) to investigate $k=6$ for
a few values of $n$.  Furthermore,
as can be seen from the table of run-times (Table 1), gathering enough
samples to attempt distribution fitting for $k=7$ would require approximately
75 years on a single computer or a couple of years on the cluster
of 36 computers we have available to us (with dedicated use, which
we do not have).  So, at this
time, pursuit of $k=7$ is not realistic with our algorithm.

\begin{center} 
\begin{tabular} { |c|c|c|} 
\hline
Input $k$ & Input $n$ & Time per graph (sec)  \\
\hline \hline
3 & 6 & 0.0002 \\
4 & 18 & 0.00275 \\ 
5 & 43 & 0.15 \\
5 & 49 & 0.296 \\
6 & 102 & 22.59 \\
6 & 165 & 407 \\
7 & 205 & 2368 \\
\hline
\end{tabular}
\vskip 5pt 
{\small {\bf Table 1}: Run-times for {\tt GraphCount}}
\end{center}

In Figure 1, we present the empirical probability mass
functions for the number of monochromatic $K_k$ subgraphs over
$2$-colorings of the edges of $K_n$ for small $k$ and $n$.  You will notice
a similar shape for all presented.  This occurred in all histograms
we obtained (for sufficiently large sample sizes).

\vskip 30pt

\begin{figure}[h!]\tiny 
\begin{center} \hspace*{-.1in}
		\begin{tabular}{ccc}
		\includegraphics[scale=.21]{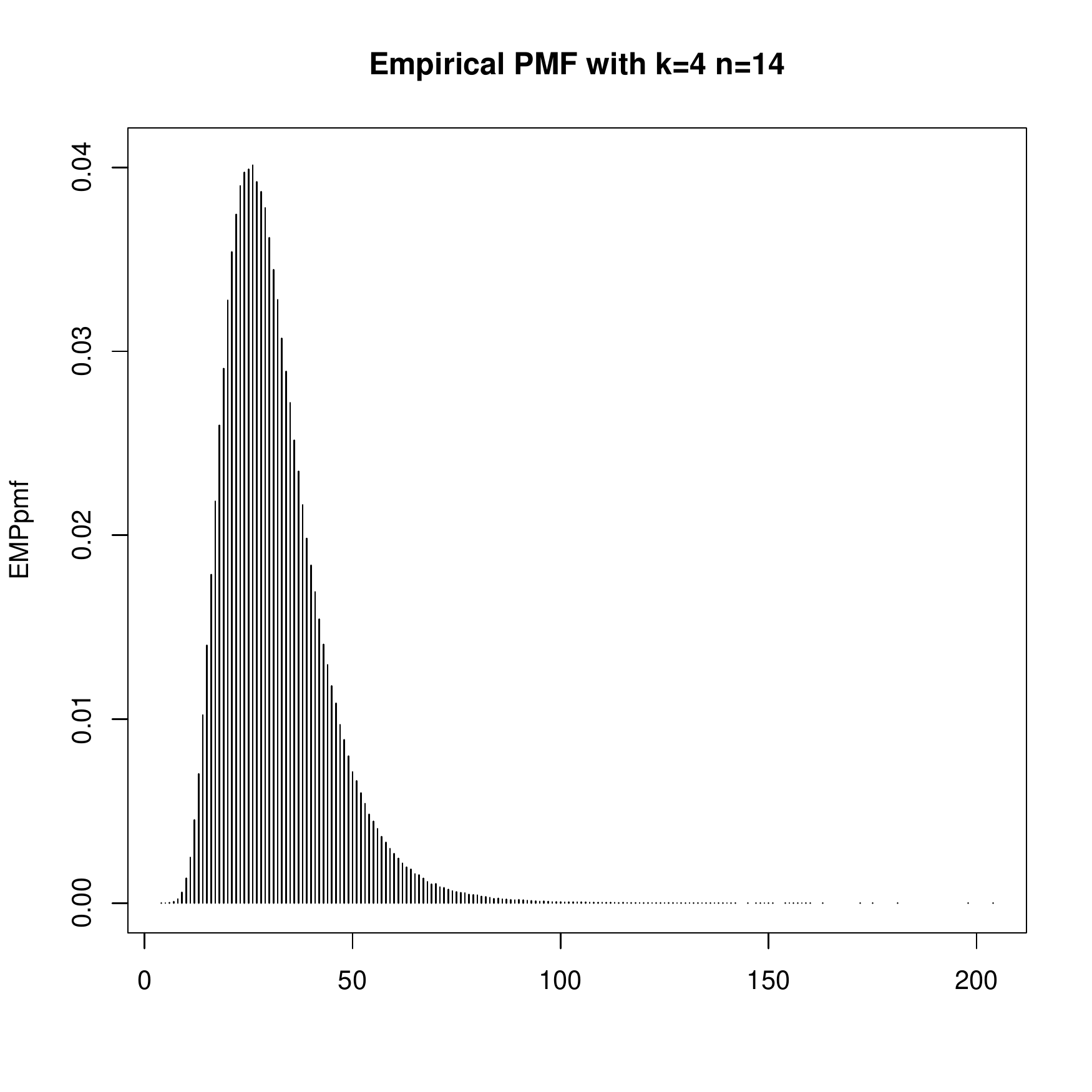} &  
		\includegraphics[scale=.21]{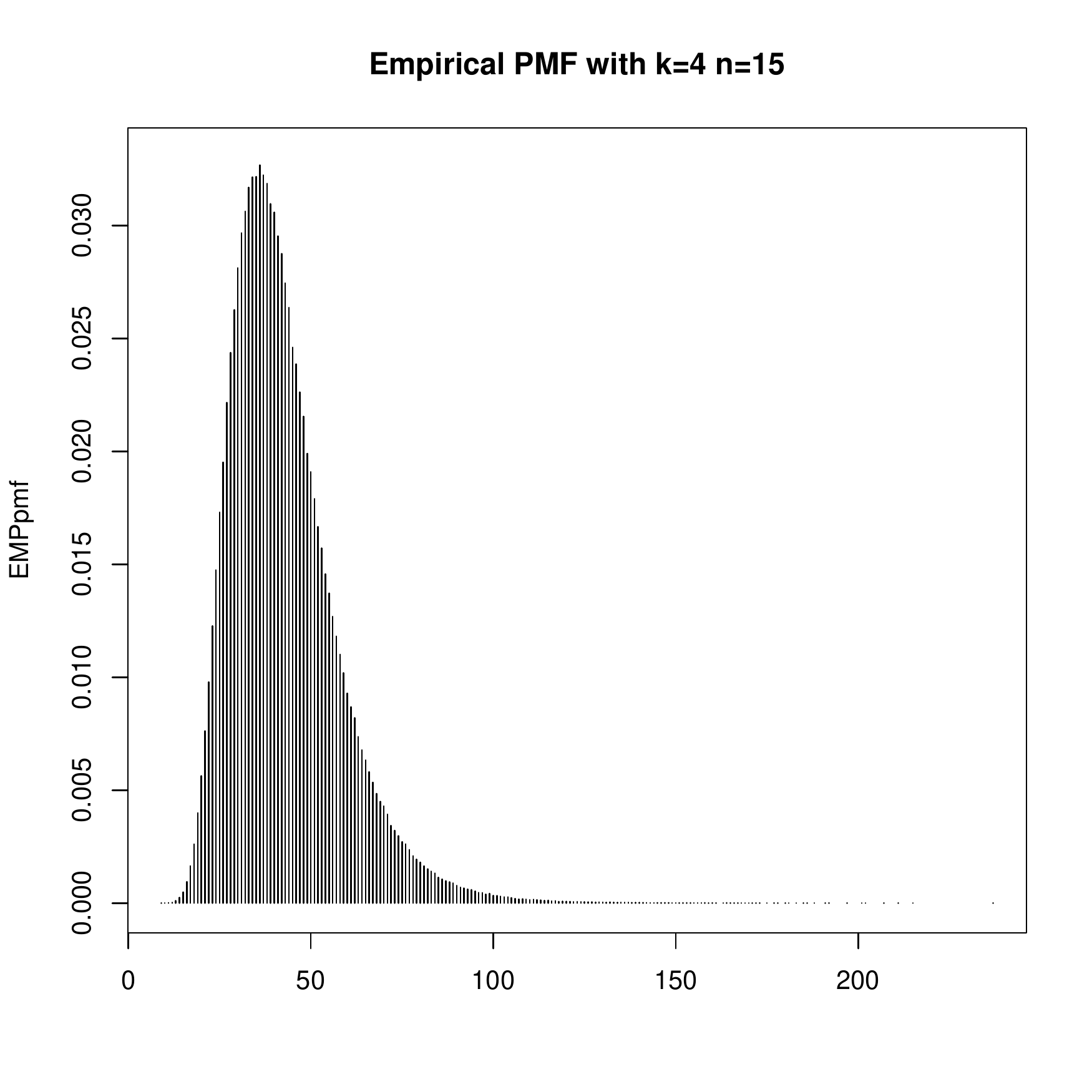} &   
		\includegraphics[scale=.21]{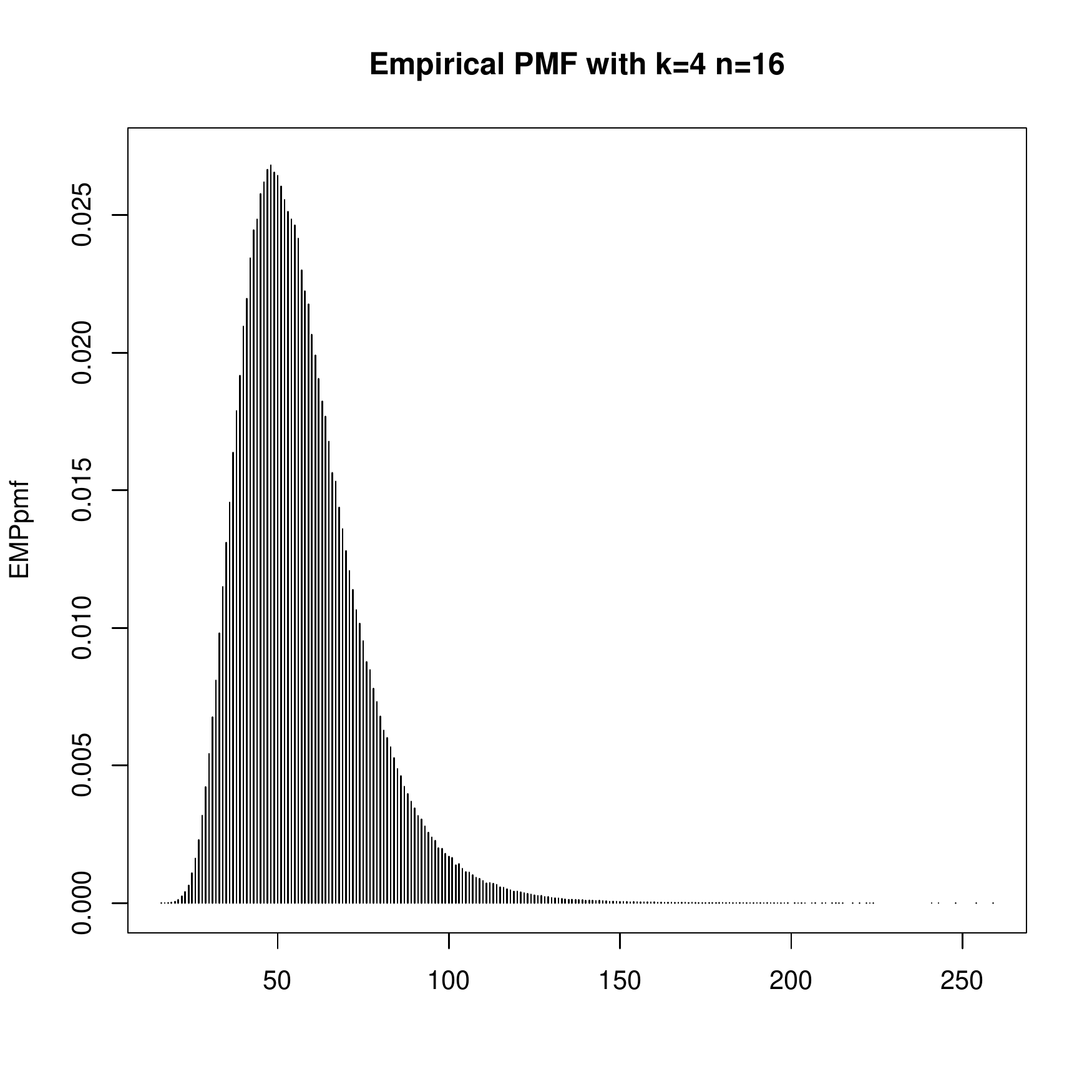} \vspace{-.4em}\\
			Sample size = 1M & Sample size = 1M & Sample size = 1M\\ 
		\includegraphics[scale=.21]{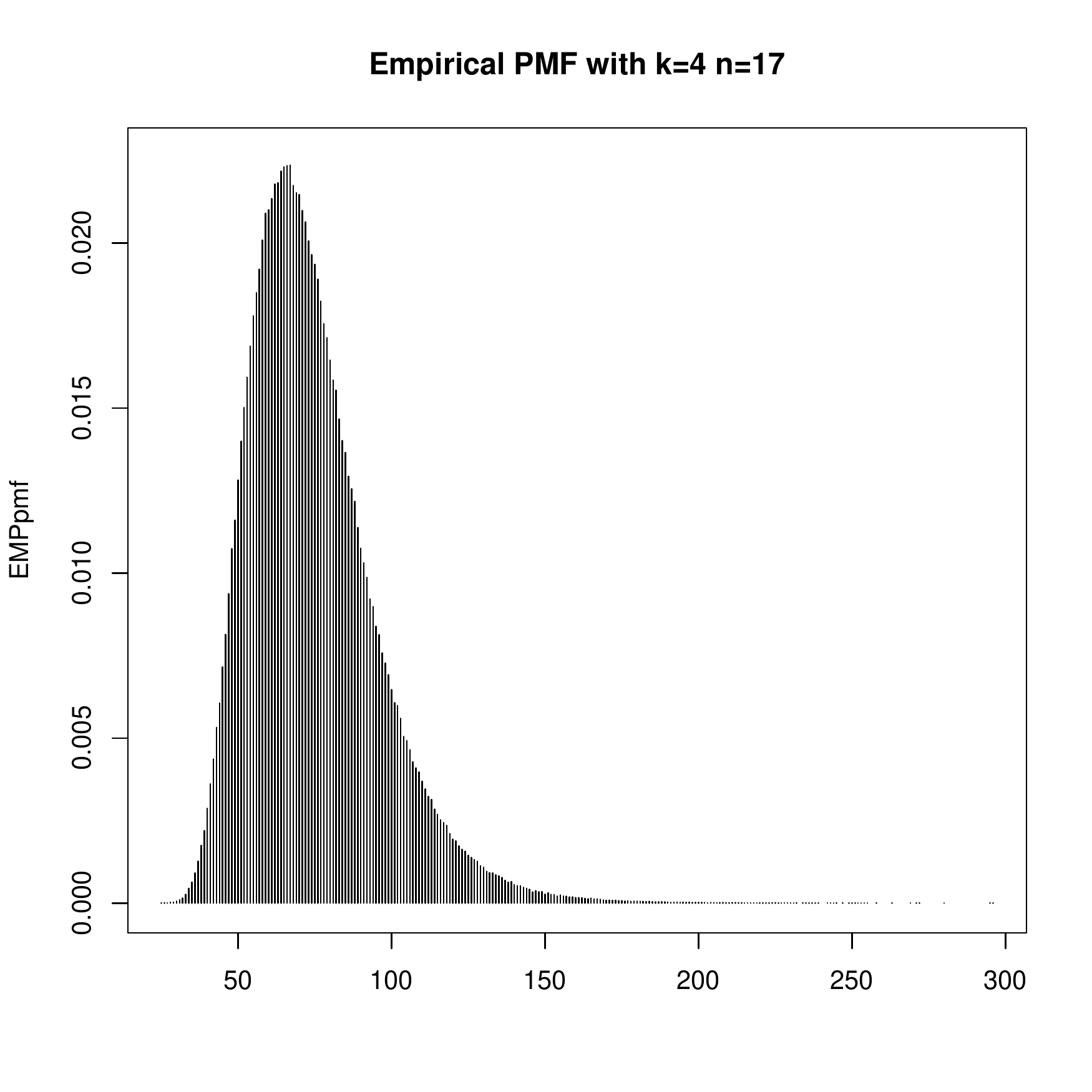} &  
		\includegraphics[scale=.21]{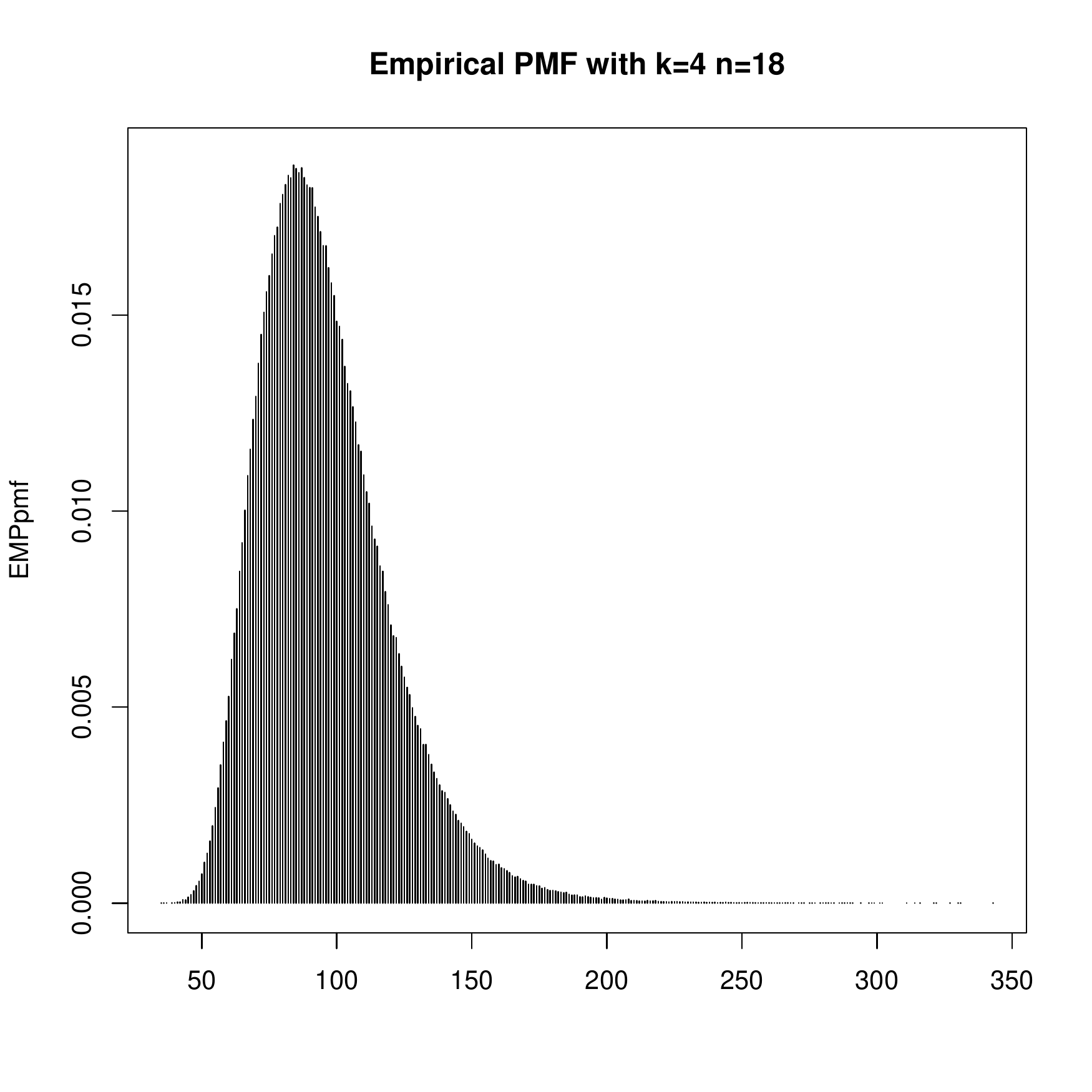} & 
		\includegraphics[scale=.21]{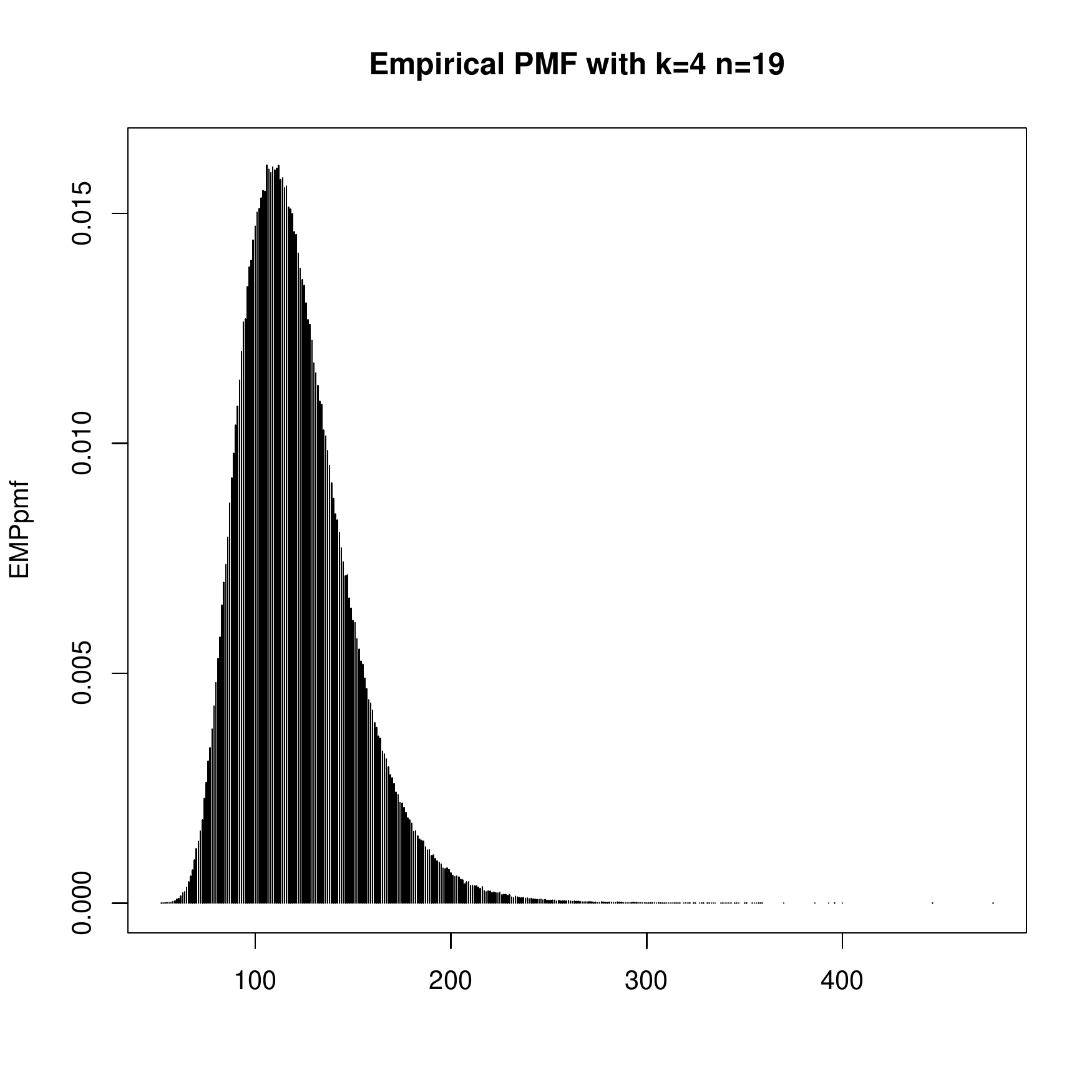} \vspace{-.4em}\\
		Sample size = 1M & Sample size = 1M & Sample size = 1M\\ 
		\includegraphics[scale=.21]{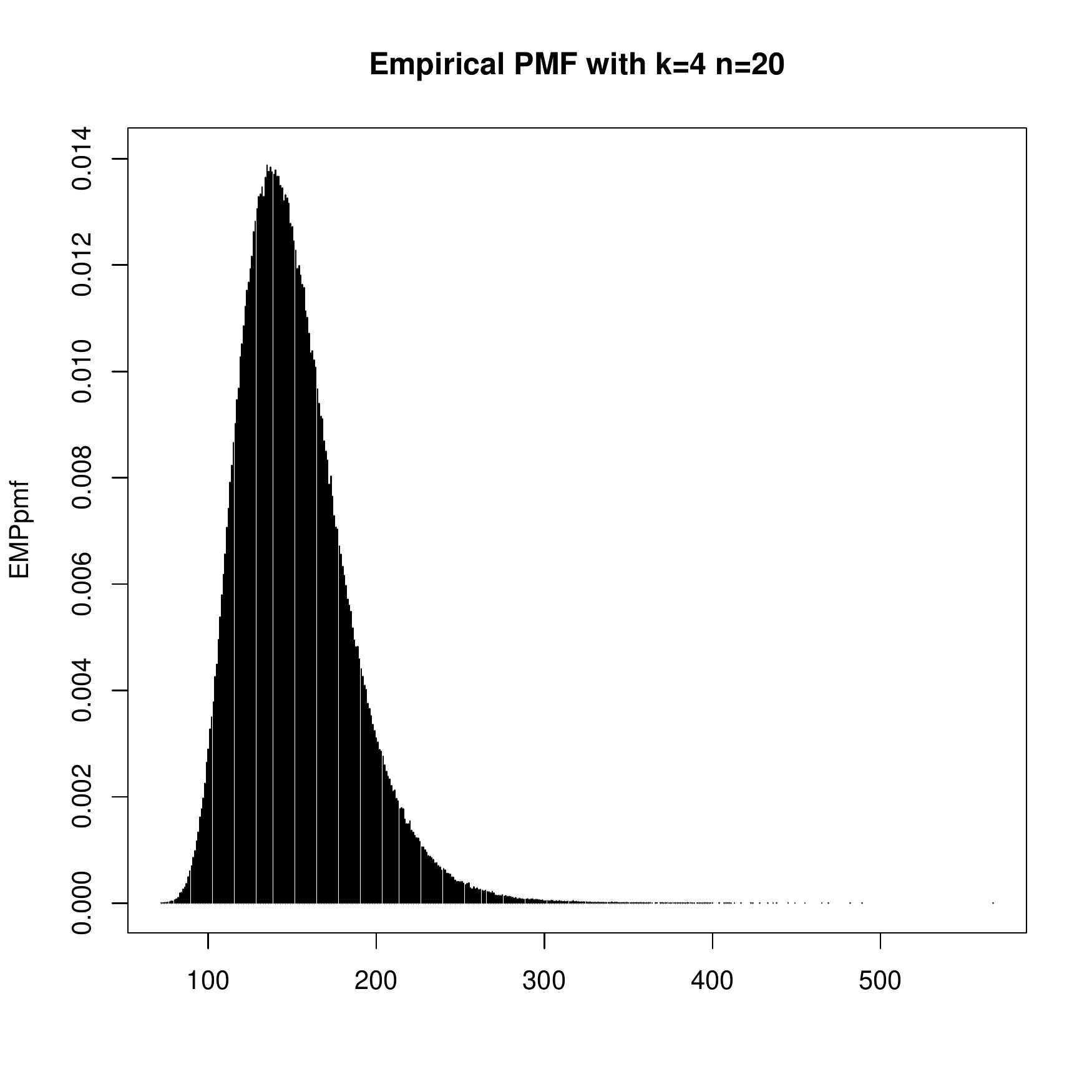} &  
		\includegraphics[scale=.21]{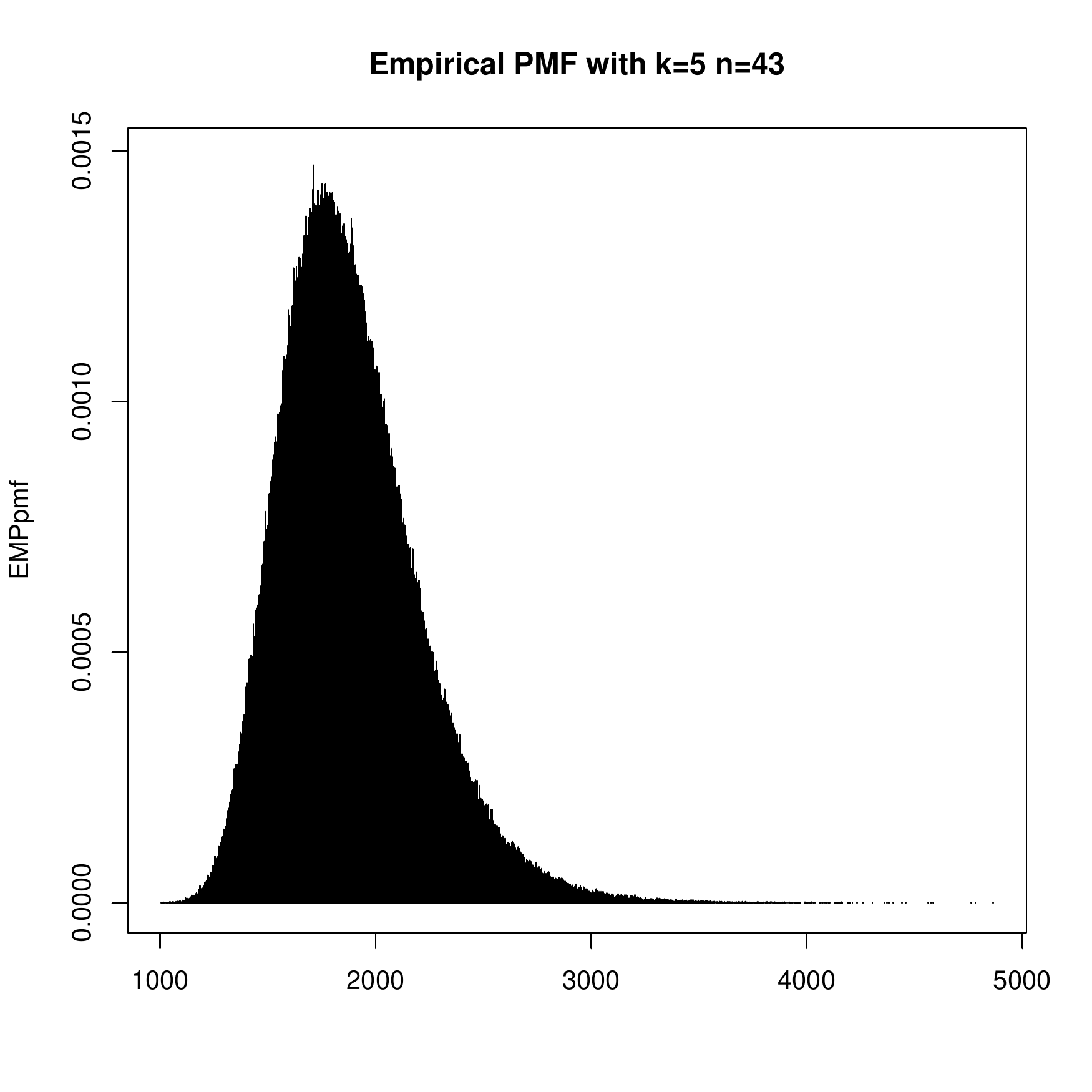} &   
		\includegraphics[scale=.21]{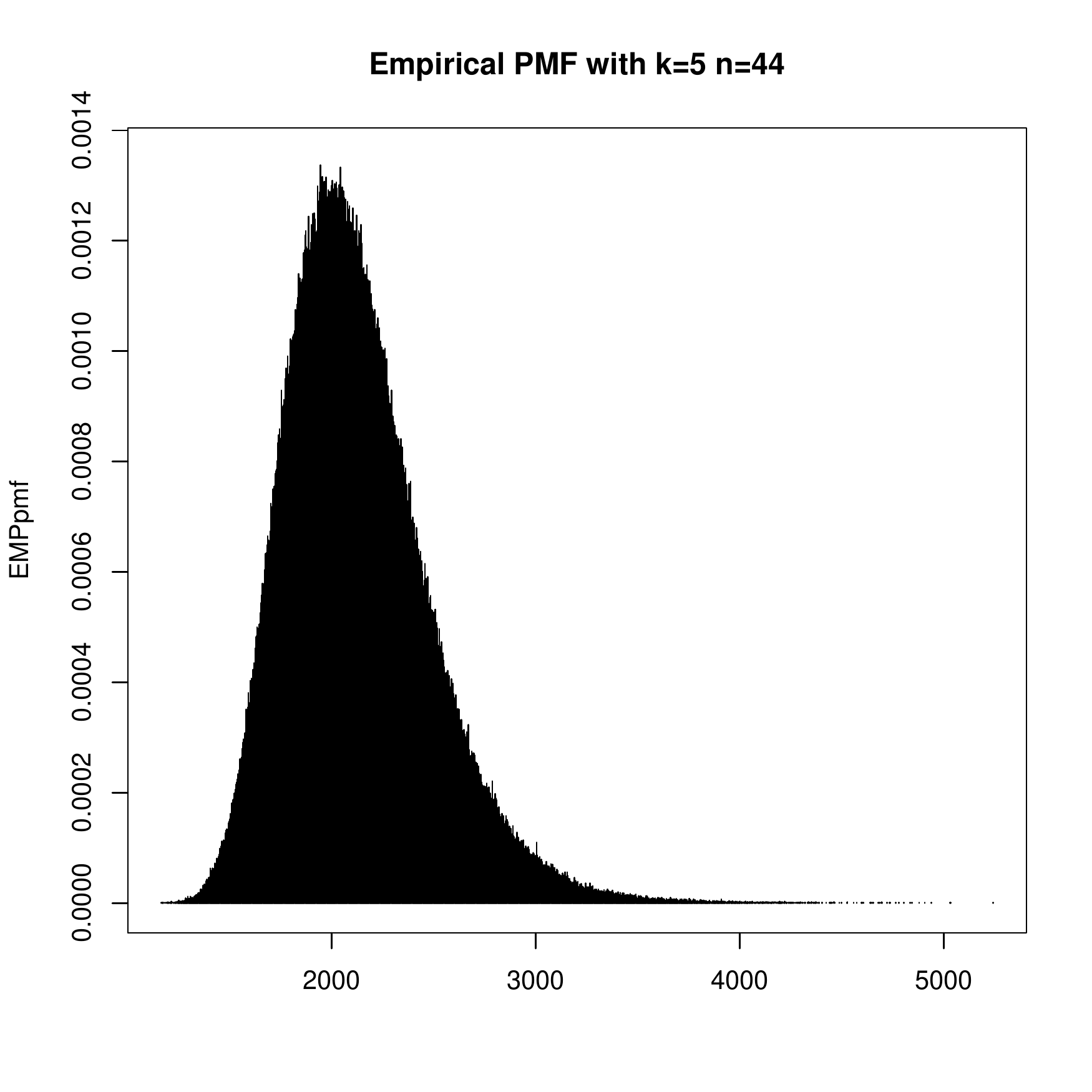} \vspace{-.4em}\\
		Sample size = 1M & Sample size = 1.1M & Sample size = 1M\\ 
		\includegraphics[scale=.21]{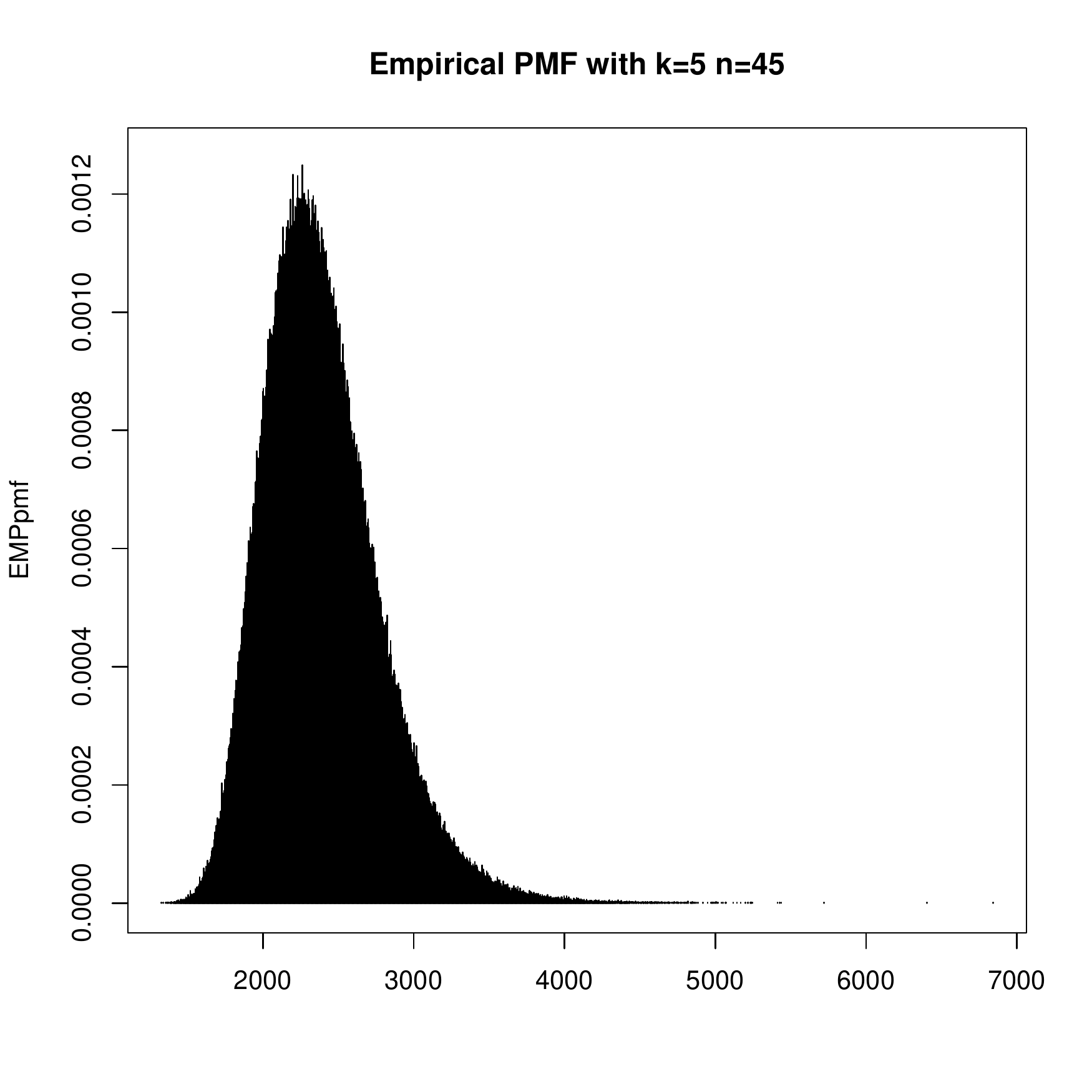} &  
		\includegraphics[scale=.21]{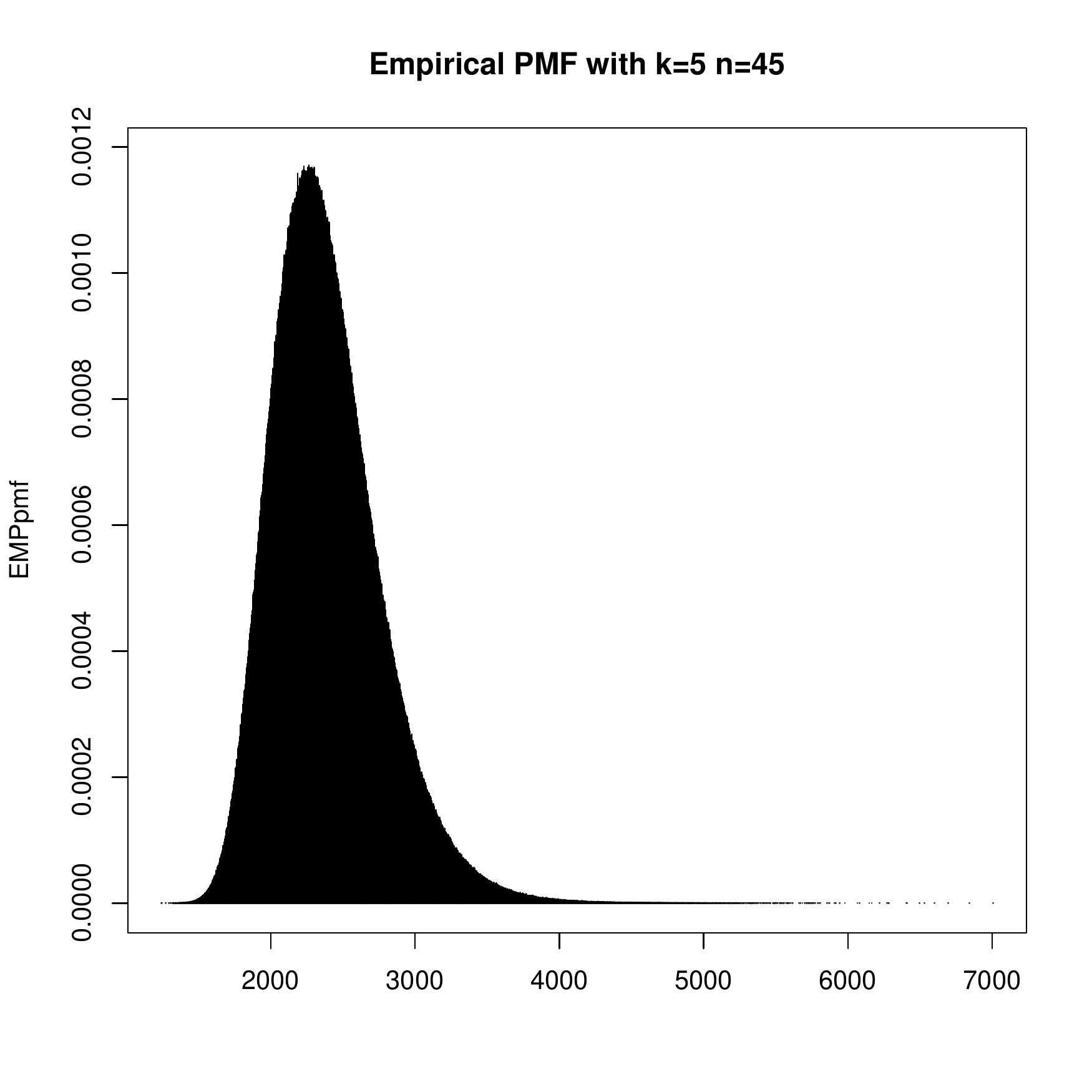} &
		\includegraphics[scale=.21]{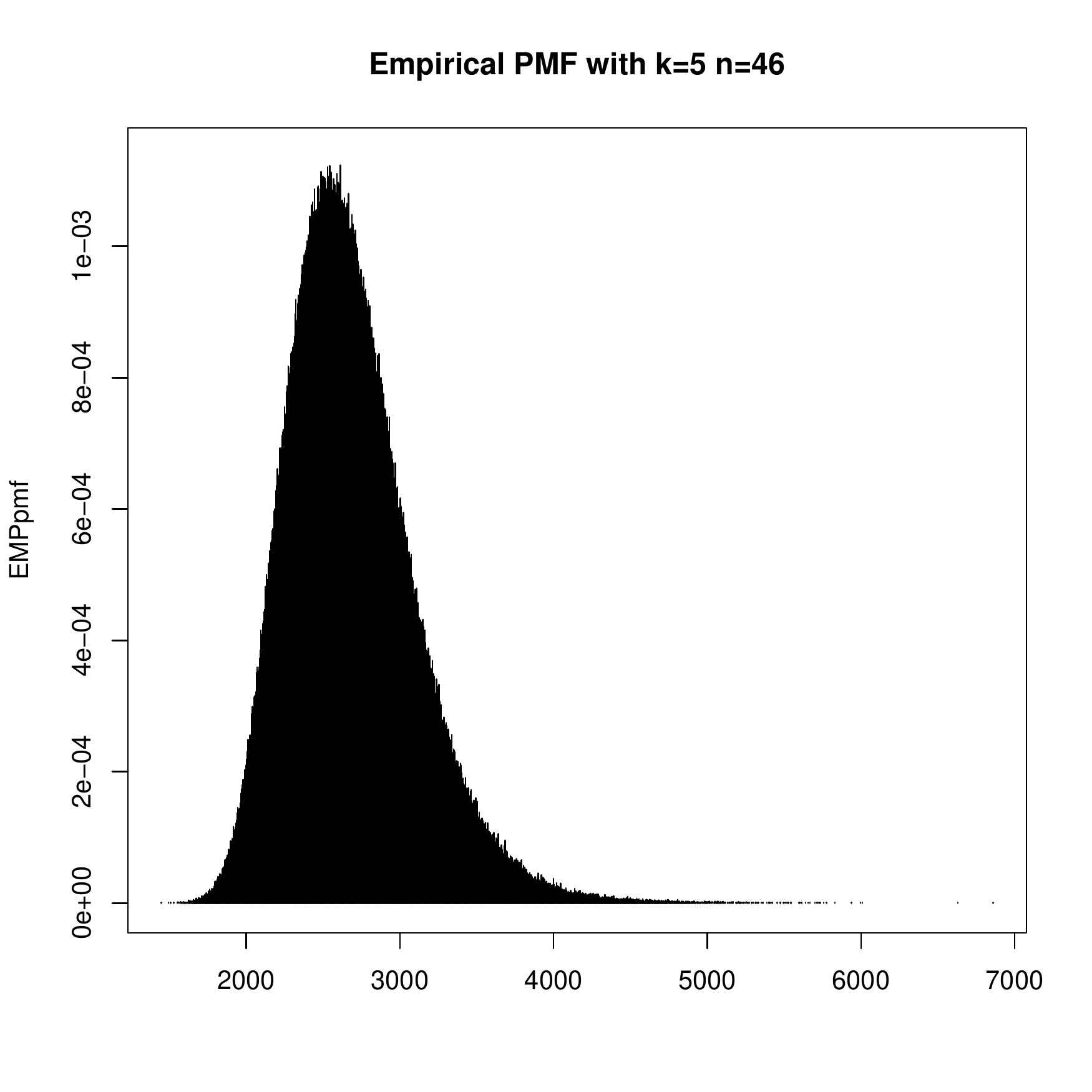}\vspace{-.4em}\\
		Sample size = 1M & Sample size = 17.4M & Sample size = 1.1M\\ 
	\end{tabular}\normalsize
	\caption{Empirical probability mass functions (pmfs) for various scenarios.}
\end{center}
\end{figure}

\section{Fitting the Empirical Probability Mass Function}
\noindent
We can view the random variable $X_k$ as a sum of indicator random variables
$Y_i$, where $Y_i = 1$ if  the $i^{\mathrm{th}}$ $K_k$ is monochromatic and
$Y_i=0$ otherwise.
Since $n$ is typically much larger than $k$, most pairs of $K_k$'s are
independent.  Hence, we can view $X_k$ as the sum of (somewhat) weakly
dependent indicator random variables, each of which have a small probability
of being $1$.  By weak dependence, we mean that the probability of
two randomly chosen $K_k$'s are dependent is near $0$.  To see this,
note that the probability that two such subgraphs are dependent requires them
to share at least $2$ vertices, so that this probability is
$$\frac{{k \choose 2}{n-2 \choose k-2}}{{n \choose k}} \approx \frac{k^4}{2n^2}.$$  
Noting that $n$ is typically much larger than $k^2$, we see that this probability is quite low.

Now, as $k$ tends to infinity, the proportion of dependent pairs of
subgraphs goes to 0.  Hence, asymptotically, we can view the set of all subgraphs
as almost entirely independent.  In the situation where the subgraphs
are completely independent, since the probability that any
given subgraph is monochromatic is small, through the Poisson process we get a Poisson distribution and the
result in \cite{G}; see \cite{J} for a general theorem about when we can
have a limiting Poisson distribution with weak overall dependence.

However, when we investigate fixed values of (small) $k$, as we
can see from Figure 2, the Poisson distribution is not a good fit.
This is to be expected since the Poisson distribution has a variance
equal to its expectation, while
we  know that $\mathbb{E}(X_k) \neq$ Var$(X_k)$ for $k \geq 4$ (see Lemma \ref{ExpVar} below). For the $k=3$ case,
we do have $\mathbb{E}(X_k) \approx $ Var$(X_k)$ for large $n$; see Table 2 in the next section.  We can also see from Figure 2 that the Poisson distribution appears under-dispersed (while Lemma \ref{ExpVar} below proves
this).
More fundamentally, for fixed values of $k$,
the dependence between some of the subgraphs is not
accounted for with Poisson modeling. 

\begin{lemma} For any $k \geq 4$, we have 
$\displaystyle \lim_{n \rightarrow \infty} \frac{\mathbb{E}(X_k(n))}{\mathrm{Var}(X_k(n))} =0.$
\label{ExpVar}
\end{lemma}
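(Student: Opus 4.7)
The plan is to view $X_k$ as a sum of $\binom{n}{k}$ weakly dependent indicators and compare the orders in $n$ of $\mathbb{E}(X_k)$ and $\mathrm{Var}(X_k)$ for fixed $k$. Write $X_k = \sum_S Y_S$, where $S$ ranges over the vertex $k$-subsets of $K_n$ and $Y_S$ is the indicator that the copy of $K_k$ induced on $S$ is monochromatic. Because $p := \mathbb{P}(Y_S = 1) = 2^{1-\binom{k}{2}}$ is a positive constant depending only on $k$, one immediately gets $\mathbb{E}(X_k) = \binom{n}{k}\,p = \Theta(n^k)$.

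For the variance, I would use
$\mathrm{Var}(X_k) = \sum_S \mathrm{Var}(Y_S) + \sum_{S \neq T} \mathrm{Cov}(Y_S, Y_T)$
and classify the ordered pairs $(S,T)$ by their intersection size $t = |S \cap T|$. When $t \leq 1$ the two copies of $K_k$ share no edge, so $Y_S$ and $Y_T$ are independent. For $t \geq 2$, the $\binom{t}{2}$ shared edges force both cliques to have the same color whenever both are monochromatic, and a short calculation then shows the covariance equals a constant multiple of $2^{\binom{t}{2}} - 2$. This quantity vanishes precisely at $t = 2$ (a single shared edge) and is strictly positive for $t \geq 3$.

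Next I would count: the number of ordered pairs with $|S \cap T| = t$ is $\binom{n}{k}\binom{k}{t}\binom{n-k}{k-t}$, which is of order $n^{2k-t}$ for fixed $k$. Summing the nonzero covariance contributions over $t = 3, \ldots, k-1$, the leading term comes from $t = 3$, yielding $\mathrm{Var}(X_k) = \Theta(n^{2k-3})$; the diagonal term $\binom{n}{k}p(1-p) = \Theta(n^k)$ is of strictly lower order once $k \geq 4$. Since $2k - 3 > k$ whenever $k \geq 4$, we conclude $\mathbb{E}(X_k)/\mathrm{Var}(X_k) = \Theta(n^{3-k}) \to 0$, as claimed.

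The only subtle point is the exact cancellation at $t = 2$: this is what distinguishes the $k = 3$ case, where $t = 2$ is the only possible nontrivial intersection and the covariance sum collapses, leaving $\mathrm{Var}(X_3)$ of the same order as $\mathbb{E}(X_3)$. Beyond verifying this cancellation carefully, the remaining work is a routine asymptotic expansion of binomial coefficients, so no serious obstacle is anticipated.
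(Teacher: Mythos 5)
Your proof is correct, but it takes a genuinely different route from the paper. The paper does not compute the variance directly: it invokes Lemma 3.5 of Janson--{\L}uczak--Ruci\'nski (\emph{Random Graphs}), which gives $\mathrm{Var}(X_k) \asymp \tfrac{1}{2}\,\mathbb{E}^2(X_k)/\Phi(X_k)$ with $\Phi(X_k)=\min_H \mathbb{E}(X_H)$ over nontrivial subgraphs $H$ of $K_k$, observes that the minimum is attained at $H=K_3$, and reads off $\mathrm{Var}(X_k) = \Omega\bigl(\binom{n}{k} n^{k-3} 2^{-2\binom{k}{2}+2}\bigr)$, which dominates $\mathbb{E}(X_k)=\binom{n}{k}2^{1-\binom{k}{2}}$ once $k\geq 4$. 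You instead carry out the second-moment computation from scratch: your covariance formula is right --- conditioning on the first clique's color gives $\mathbb{E}(Y_SY_T)=2^{1-2\binom{k}{2}+\binom{t}{2}}$ for $t=|S\cap T|\geq 2$, hence $\mathrm{Cov}(Y_S,Y_T)=2^{1-2\binom{k}{2}}\bigl(2^{\binom{t}{2}}-2\bigr)$, which vanishes at $t=2$ and is strictly positive for $t\geq 3$ --- and since all covariance terms are nonnegative, the $t=3$ stratum of size $\Theta(n^{2k-3})$ genuinely determines the order of the variance, giving the same conclusion. The two approaches are really the same fact seen from two sides: the minimizing subgraph $H=K_3$ in the paper's citation corresponds exactly to your dominant stratum of clique pairs sharing a triangle. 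What your version buys is self-containedness and a transparent explanation of why $k=3$ is exceptional (there $t=2$ is the only nontrivial overlap and its covariance cancels identically, leaving $\mathrm{Var}(X_3)=\binom{n}{3}p(1-p)$, matching the paper's Table 2 entry $\binom{n}{3}\cdot 3/2^4$); what the paper's version buys is brevity and an off-the-shelf two-sided asymptotic. Your argument is complete as sketched --- the only remaining bookkeeping is the routine $\binom{n}{k}\binom{k}{t}\binom{n-k}{k-t}=\Theta(n^{2k-t})$ count, which you already state correctly.
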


\begin{proof}
Lemma 3.5 in \cite{JLR} gives the asymptotic order of Var$(X_k)$: Define 
$\Phi(X_k) = \min (\mathbb{E}(X_H))$, where the minimum is taken over all
nontrivial subgraphs $H$ of $K_k$.  Then $\mbox{Var}(X_k) \asymp \frac{1}{2} \frac{\mathbb{E}^2(X_k)}{\Phi(X_k)}.$
Taking $H=K_3$ (where $K_2$ is the degenerate case) we see that 
$
\mathrm{Var}(X_k) \asymp \frac{1}{2} \frac{{n \choose k}^2 / 2^{{2k \choose 2}-2}}{{n \choose 3}/2^2}
= \Omega\left({n \choose k} \frac{n^{k-3}}{2^{2{k \choose 2}-2}}\right),
$ which agrees with the expressions given in Table 2 (in the next
section).
We know that 
$\mathbb{E}(X_k) = \frac{{n \choose k}}{2^{{k \choose 2}-1}},$ and by comparison with the 
above expression we see that the lemma's statement holds.
\end{proof}

To further illustrate the point, in Figure 2 we present overlays of the best-fitting (defined in the next paragraph) Poisson distributions over the empirical
pmfs presented in Figure 1. As you can see, the Poisson distribution is clearly not a good fit for small values of $k$.

Our measure of
best-fitting is via the $\ell_1$-distance between two
 probability mass functions $f(k)$ and $g(k)$:
$
\sum_{k \geq 0} |f(k)-g(k)|.
$

\begin{figure}[h!]\tiny  
	\begin{center} \hspace*{-0.1in}
		\begin{tabular}{ccc}
			\includegraphics[scale=.21]{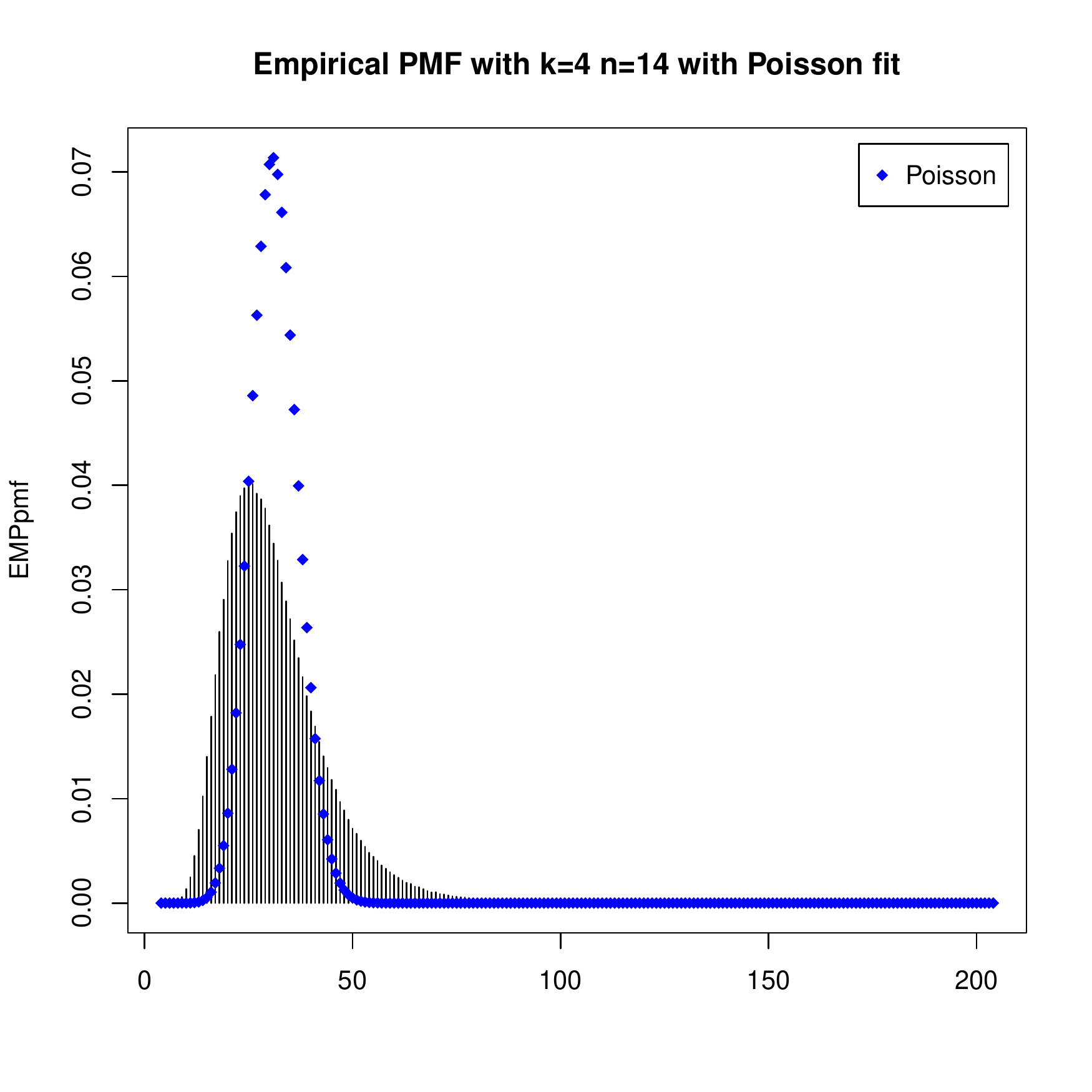} &  
			\includegraphics[scale=.21]{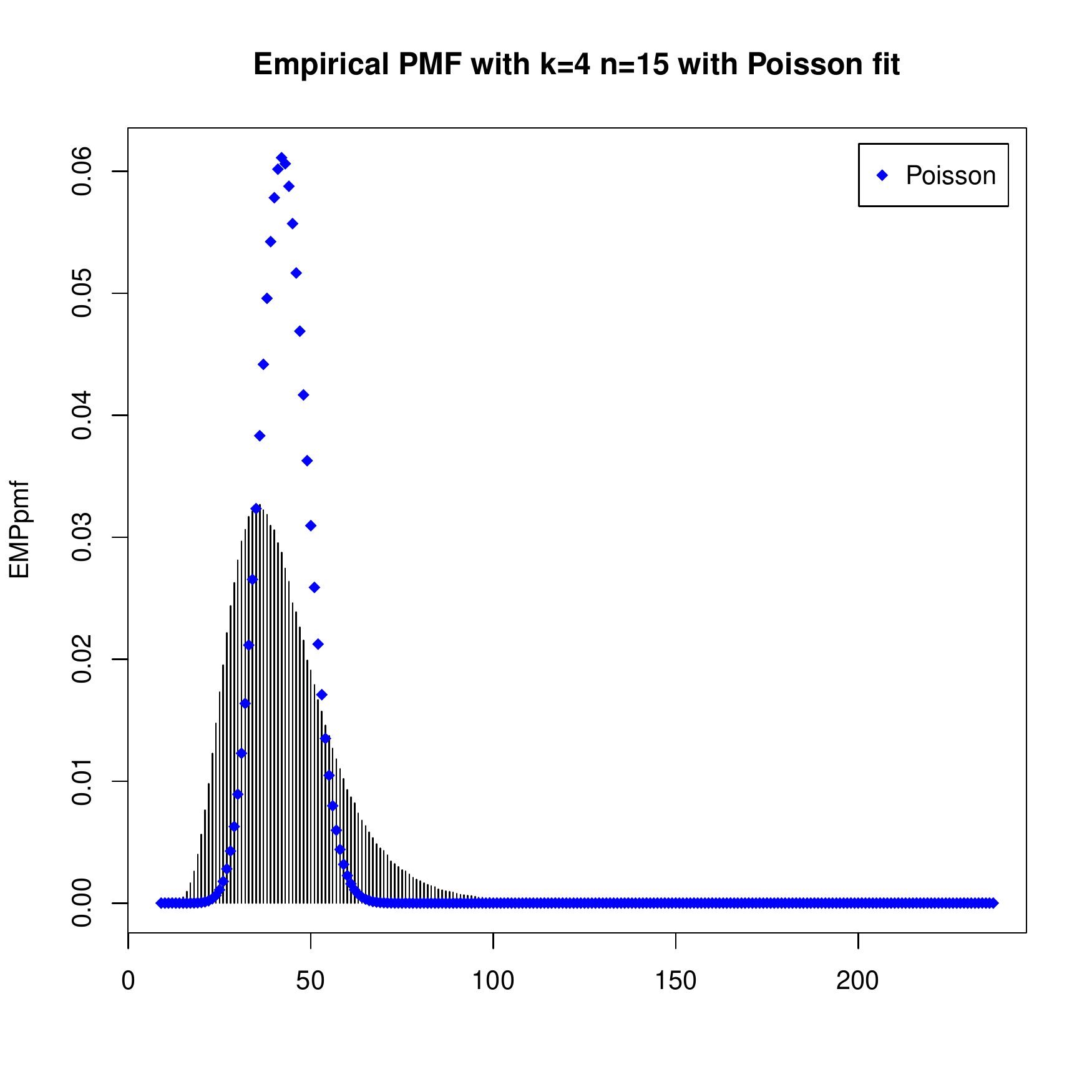} &   
			\includegraphics[scale=.21]{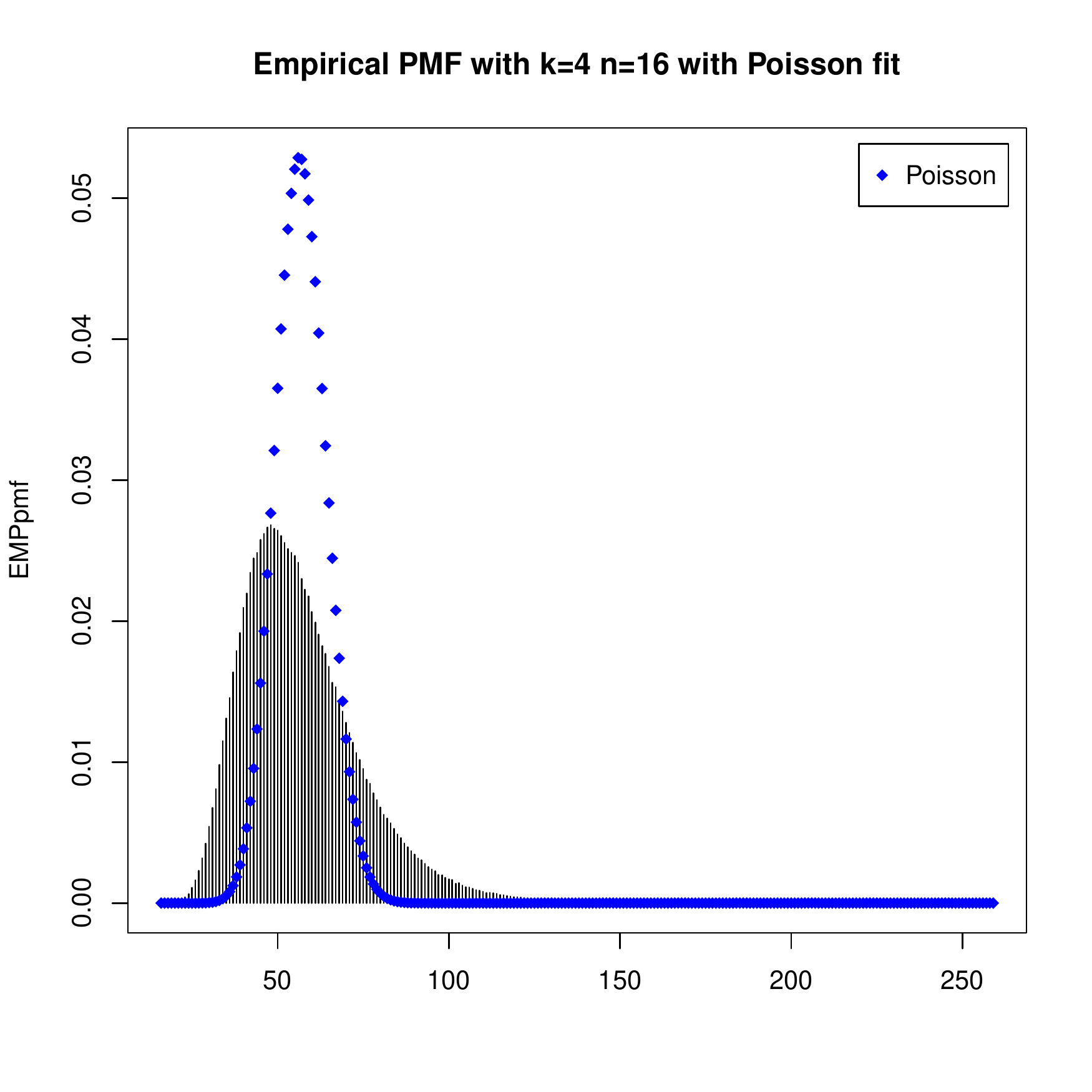} \vspace{-.4em}\\
			Sample size = 1M & Sample size = 1M & Sample size = 1M\\ 
			\includegraphics[scale=.21]{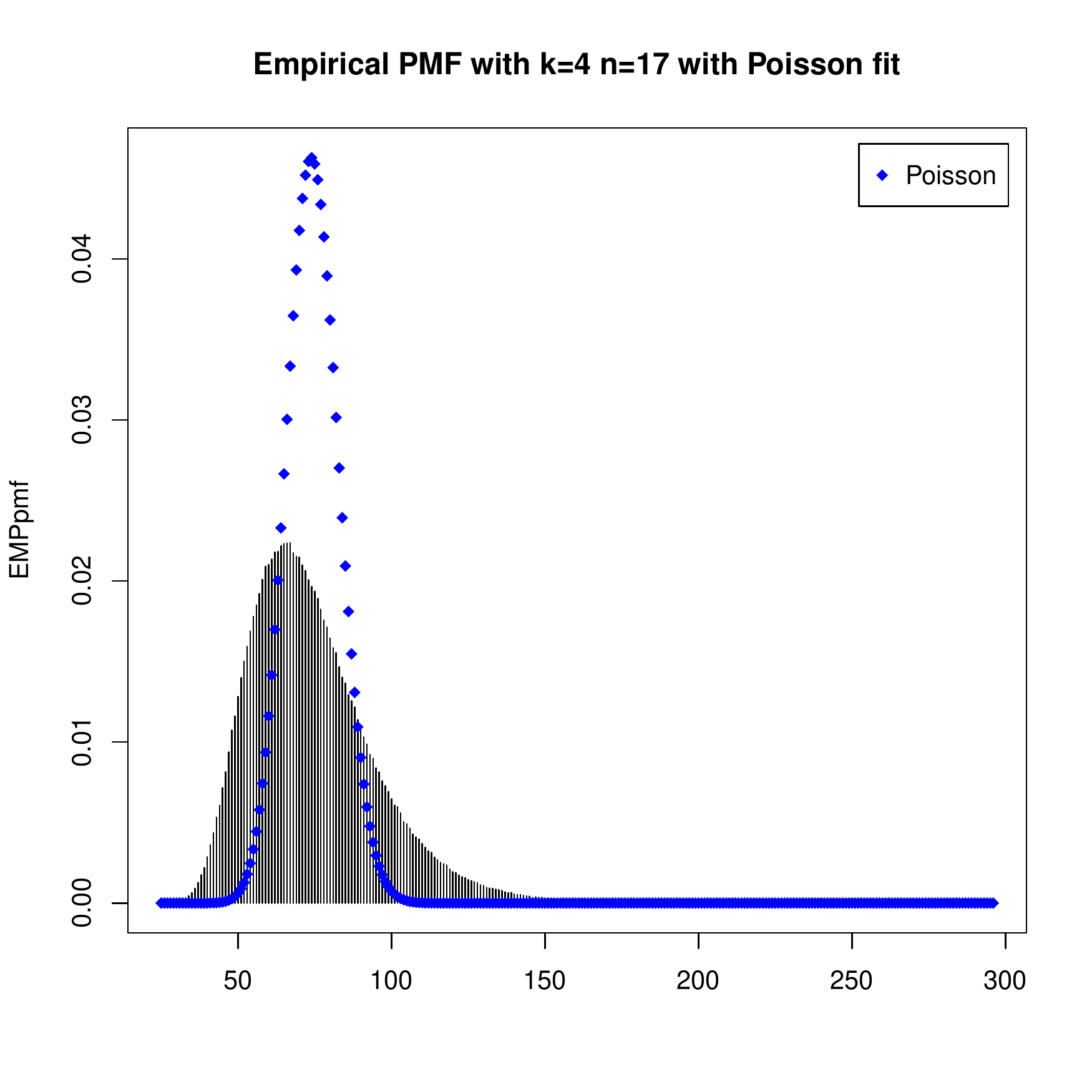} &  
			\includegraphics[scale=.21]{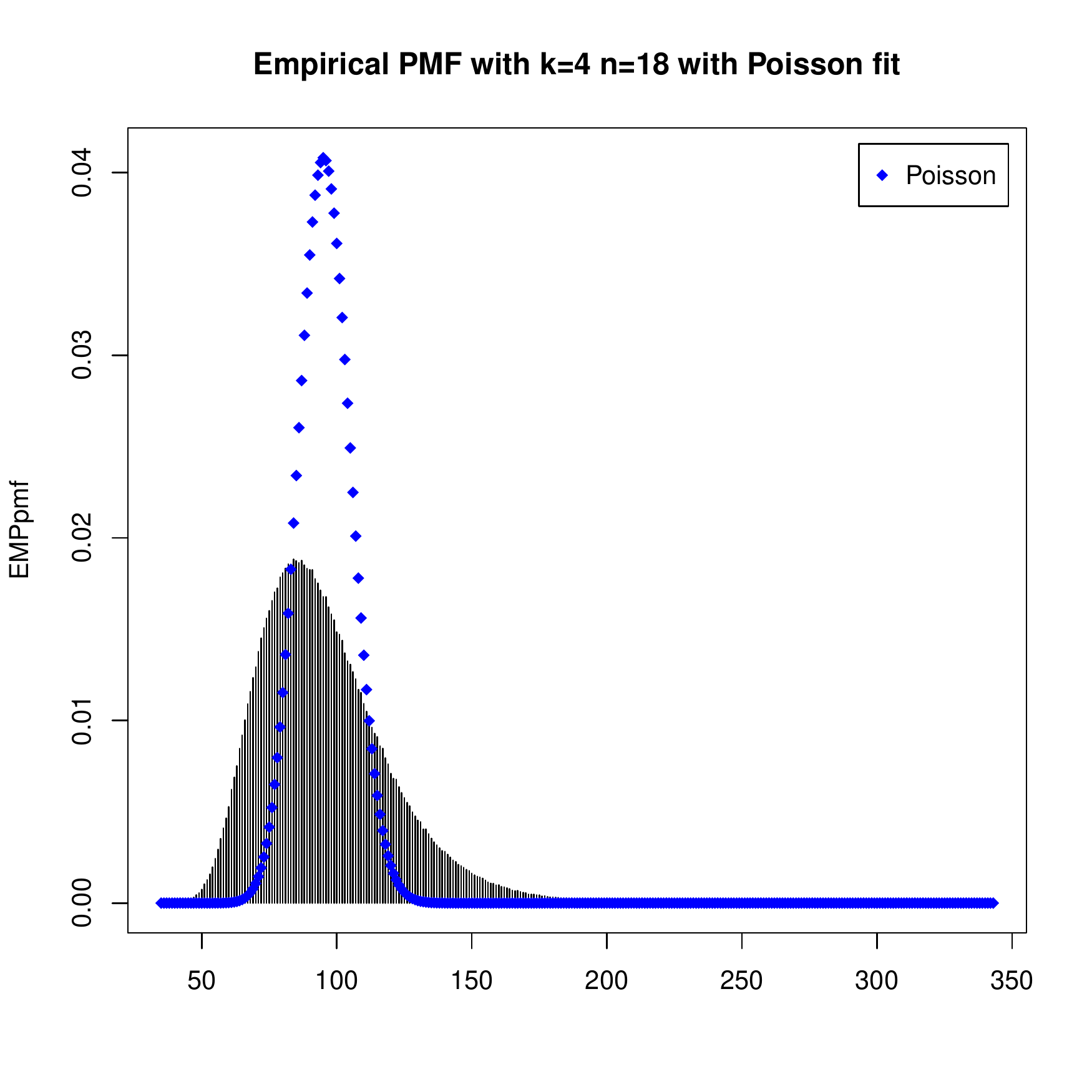} & 
			\includegraphics[scale=.21]{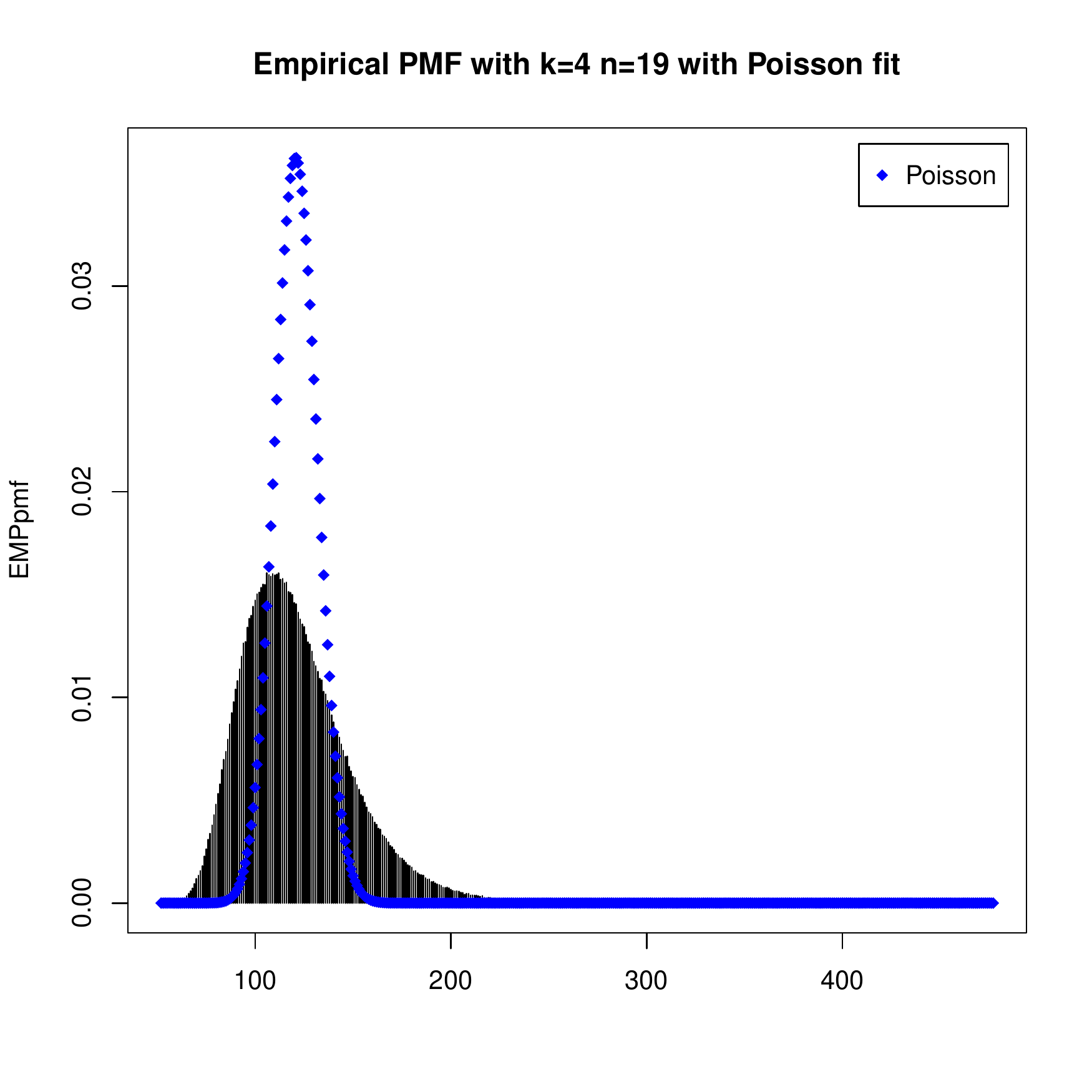} \vspace{-.4em}\\
			Sample size = 1M & Sample size = 1M & Sample size = 1M\\ 
			\includegraphics[scale=.21]{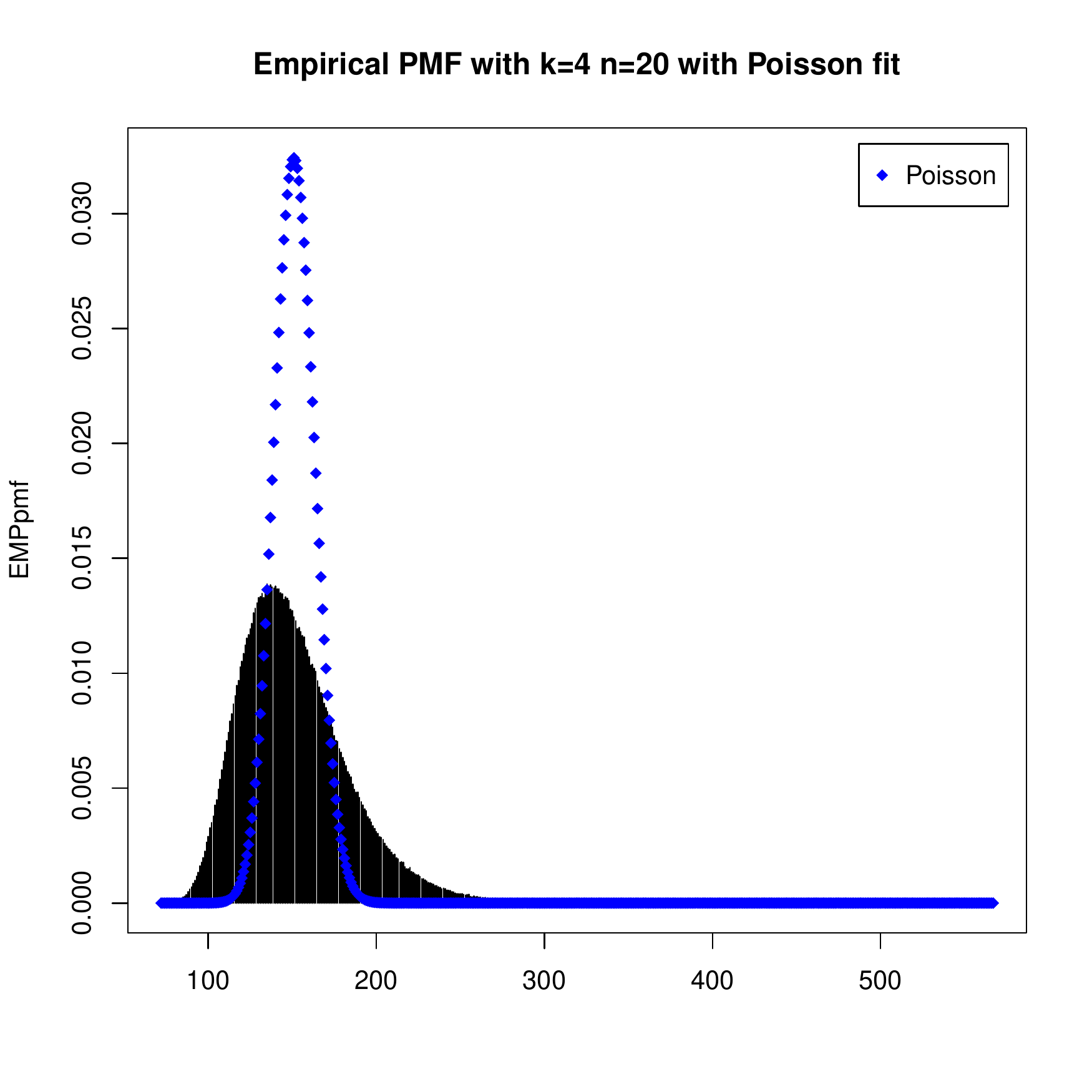} &  
			\includegraphics[scale=.21]{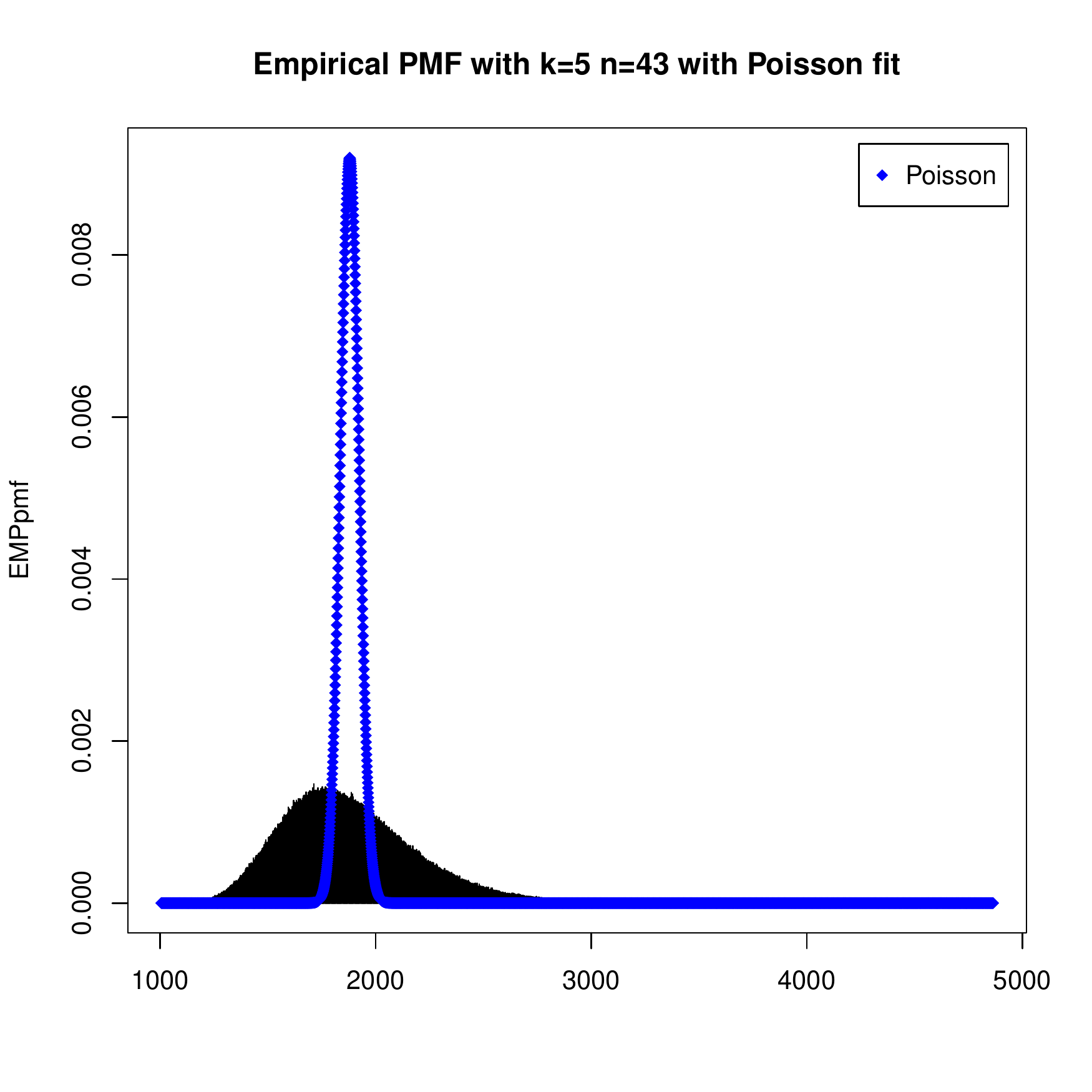} &   
			\includegraphics[scale=.21]{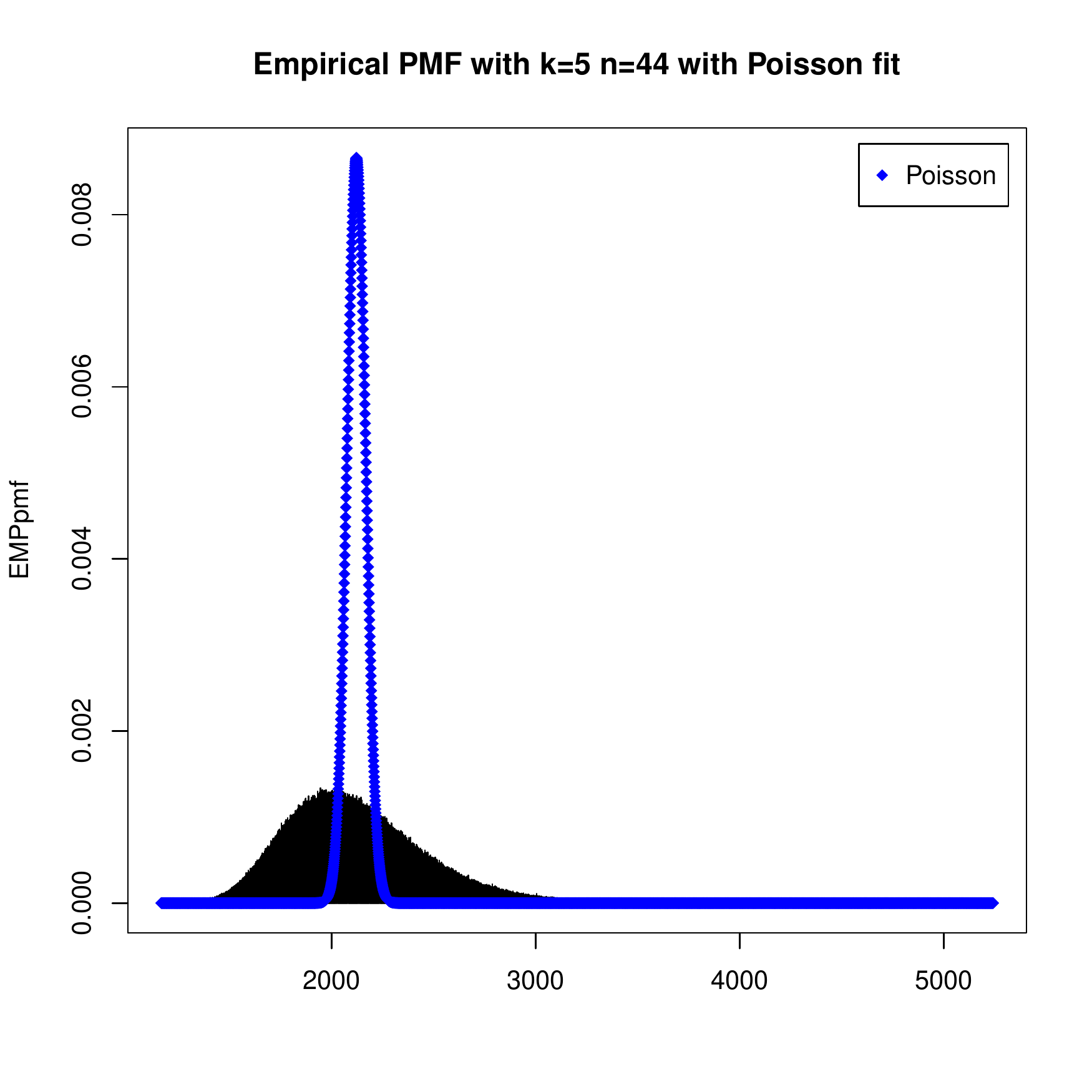} \vspace{-.4em}\\
			Sample size = 1M & Sample size = 1.1M & Sample size = 1M\\ 
			\includegraphics[scale=.21]{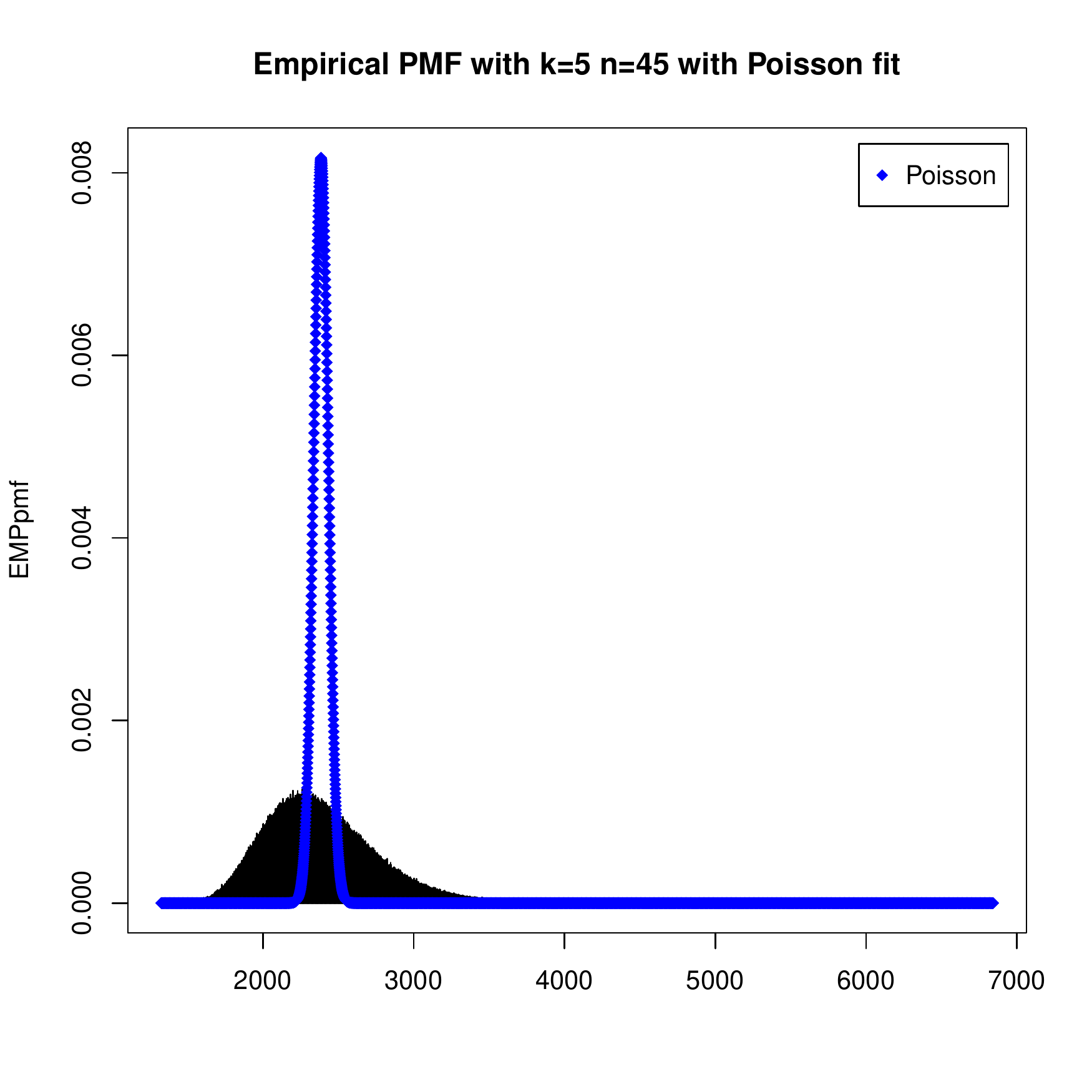} &  
			\includegraphics[scale=.21]{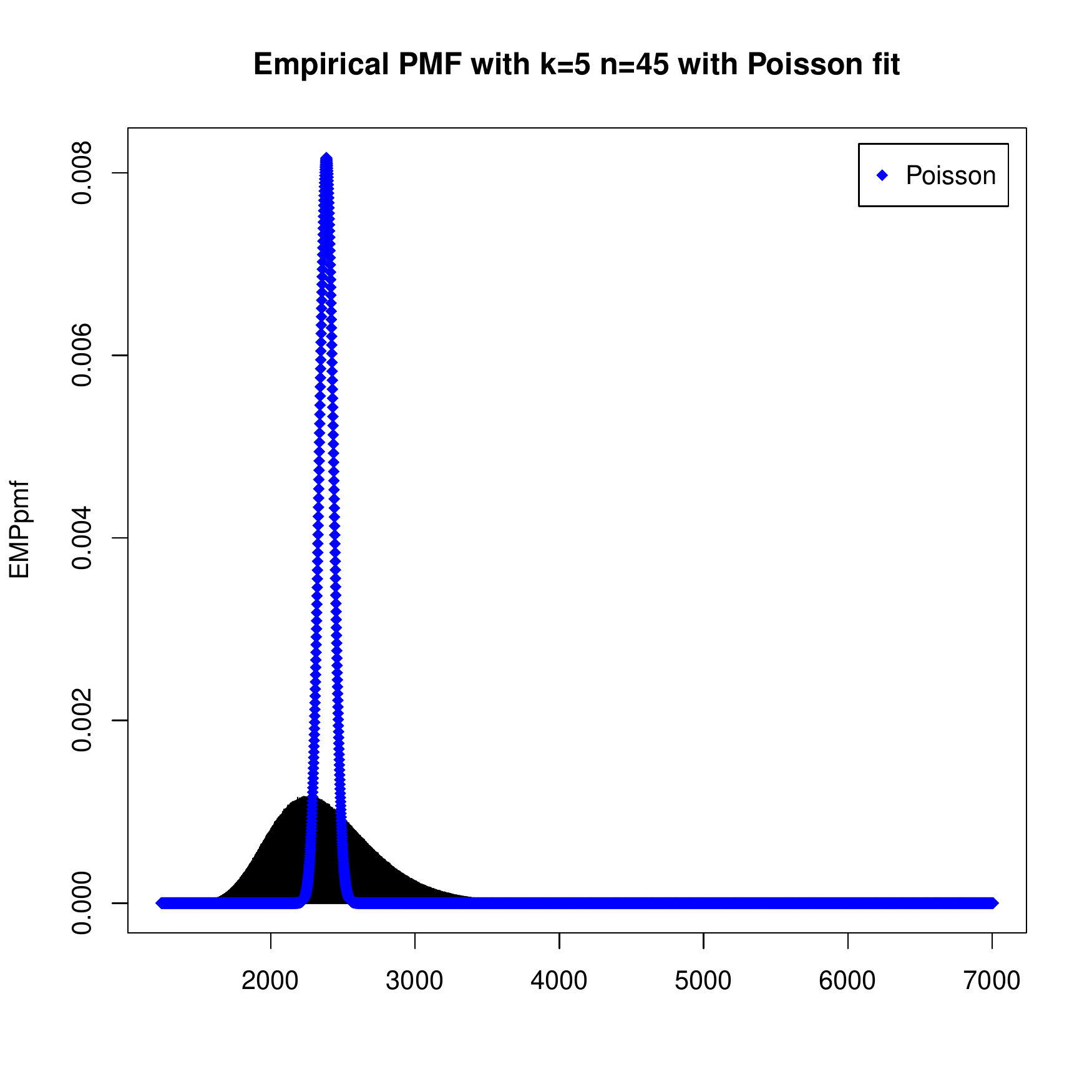} &
			\includegraphics[scale=.21]{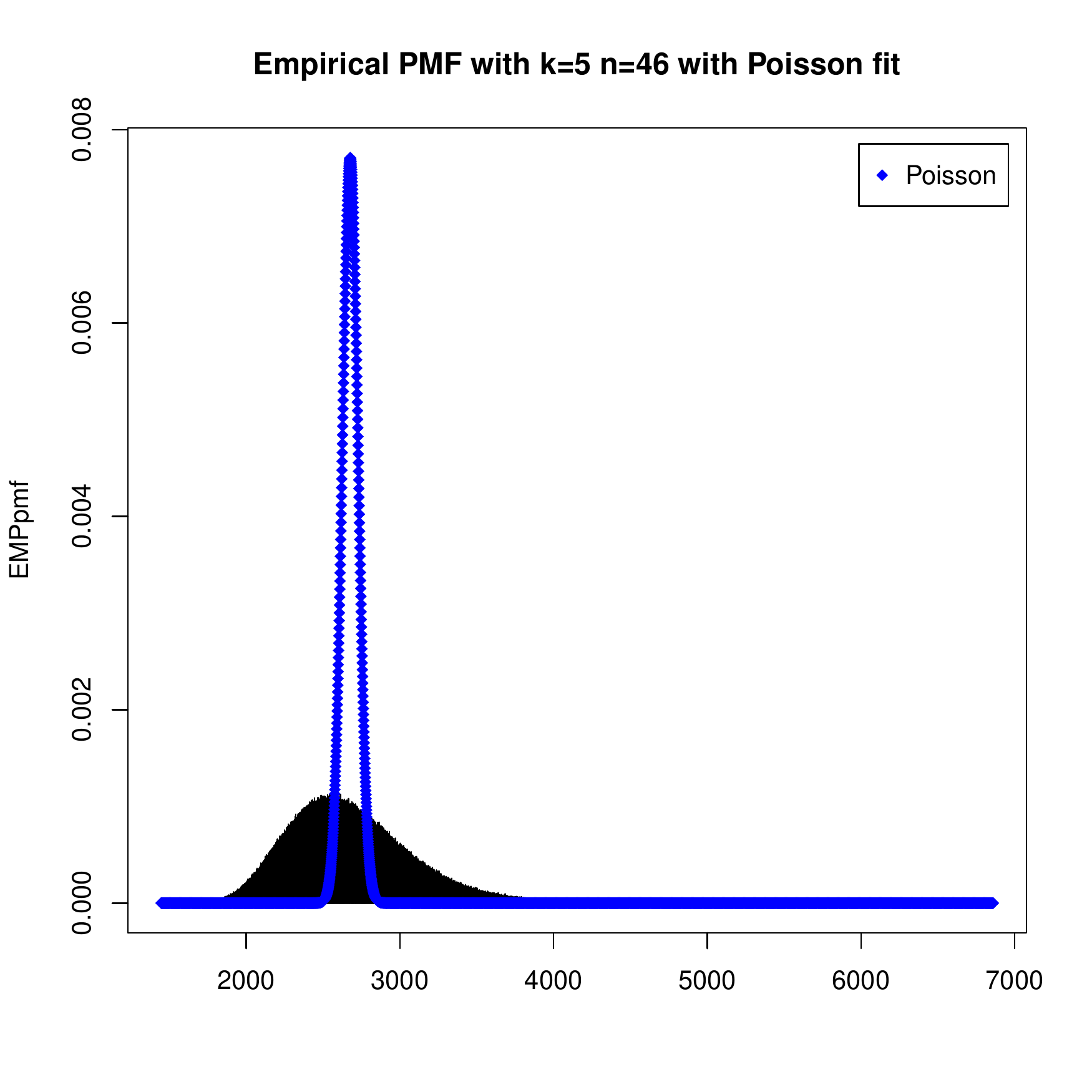} \vspace{-.4em}\\
			Sample size = 1M & Sample size = 17.4M & Sample size = 1.1M\\   
		\end{tabular} 
		\caption{Empirical pmfs for various scenarios with 
best-fit Poisson overlay}
	\end{center}
\end{figure}

To address the dependence and under-dispersion, we turn to mixed-Poisson processes, i.e.,
Poisson processes with a parameter $L$ that is
a random variable (as opposed to being fixed).  The result
will be a compound Poisson distribution. We need to maintain
the asymptotic Poisson nature and so, heuristically, having
the parameter contain a fixed portion $\lambda$ and a random portion $G$
addresses this.
Considering $L=\lambda+G$ gives our mixed-Poisson process a ``Poisson part" $\lambda$ (for the
independent subgraphs) and a
``local dependence corrector" $G$, which captures
unknown dependence.  While this addresses the dependence
issue, it also allows us to correct the under-dispersion of the
Poisson approximation.  This is to be expected since the under-dispersion
is linked with the failure to account for some dependence between
events.

A commonly used choice for mixing with Poisson is the Gamma distribution.
However, even though there is ample empirical evidence for
the use of Gamma as a mixing function, there is no real theoretic support for the choice of
Gamma \cite{V}.  It seems that the Gamma distribution is used because
of its flexibility and calculability when mixed with Poisson.
However, we can turn to the P\'olya-Eggenberger urn scheme
for some motivation.

As noted in \cite{M}, the P\'olya-Eggenberger urn models have been used
as contagion models.  In terms of our mixed-Poisson process, a contagion
model would have the property that the probabilities of future events occurring increase as
events occur (see \cite{Al}).  For us, this would model the change
in probabilities when dealing with dependent subgraphs.  Connecting this to the P\'olya-Eggenberger model, we find
that the negative binomial distribution is one of the limiting
distributions.  Since a Poisson distribution with random parameter
being Gamma results in a negative binomial distribution, we have some
(albeit, tangential)
relationship to the P\'olya-Eggenberger model as a contagion model.

Returning to our mixed-Poisson process, we can now describe
$L=\lambda+G$ as having a fixed ``Poisson part" $\lambda$ (for the
independent subgraphs) and a
``contagion driver" $G$, which helps to model the weak dependence.
This mixing process gives rise to the
following
convolution pmf called the Delaporte distribution.

\begin{definition}  A random variable $D = D(\lambda,\alpha,\beta)$ is
called a {\it Delaporte random variable} if it has probability mass
function
$$
\mathbb{P}(D=j) = \sum_{i=0}^j
\frac{\Gamma(\alpha+i)}{\Gamma(\alpha) i!} \!\left(\frac{\beta}{1+\beta}\right)^{\! i}
\!\left(\frac{1}{1+\beta}\right)^{\! \alpha} \frac{\lambda^{j-i}e^{-\lambda}}{(j-i)!}.
$$
Furthermore, $\mu=\mathbb{E}(D) = \lambda + \alpha\beta$, Var$(D) = \mathbb{E}((D-\mu)^2) = \lambda + \alpha\beta(1+\beta)$, and
$\mathbb{E}((D-\mu)^3) = \lambda + \alpha\beta(1+3\beta+2\beta^2)$. \label{DelDef}
\end{definition}

\noindent
{\bf Remark.} In the Delaporte pmf above, $\lambda$ is the parameter for the Poisson part while
$\alpha$ and $\beta$ are parameters for the Gamma part of our 
$L=\lambda + G$ model for the Poisson process rate, which leads to
a negative binomial distribution with parameters $\alpha$ and $\frac{\beta}{1+\beta}$.

\vskip 5pt

In Figure 3, we present the same empirical pmfs as in Figures 1 and 2 along with an overlay of the best-fit
Delaporte distribution.

\begin{figure}[h!] \tiny
	\begin{center} \hspace*{-0.1in}
		\begin{tabular}{ccc}
			\includegraphics[scale=.21]{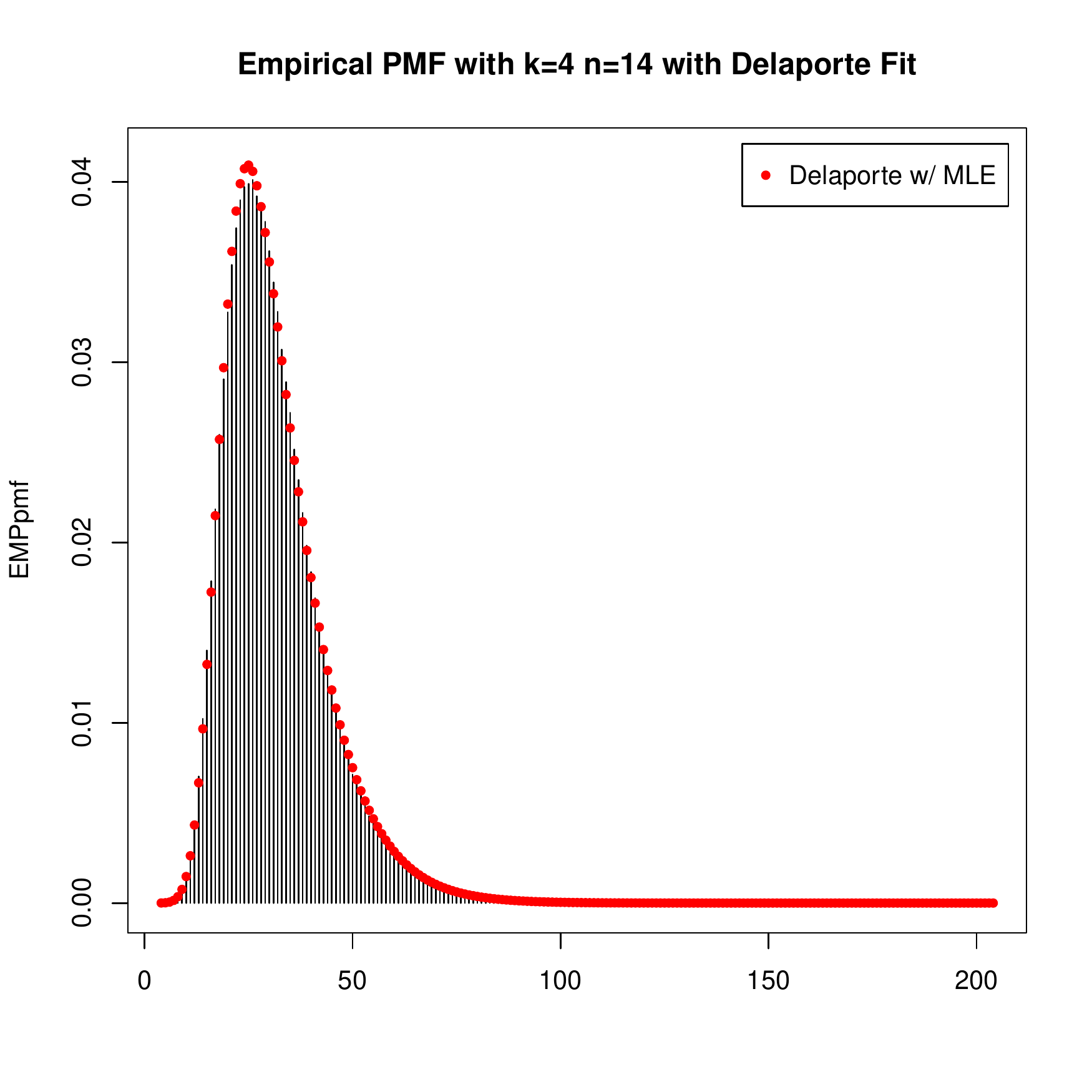} &  
			\includegraphics[scale=.21]{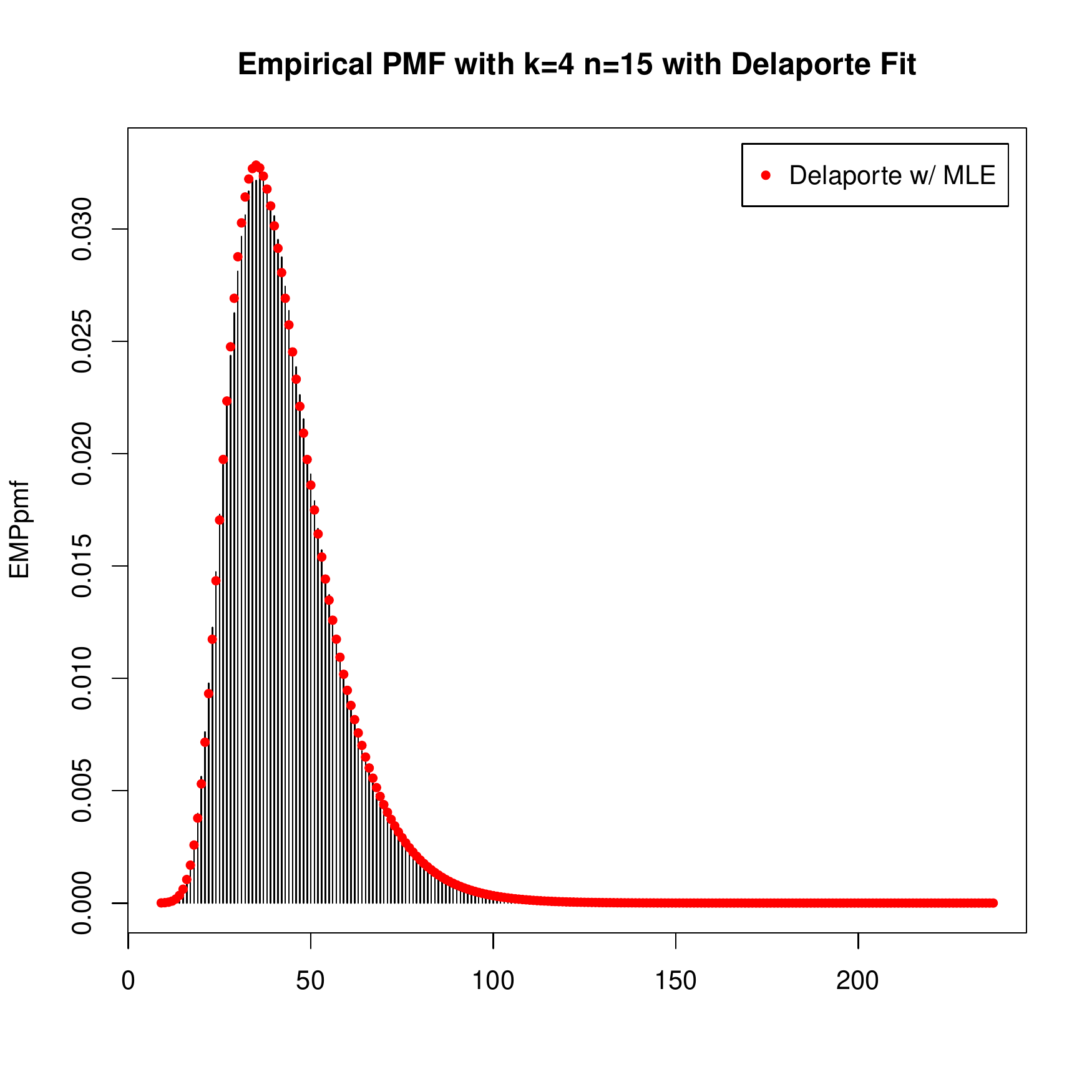} &   
			\includegraphics[scale=.21]{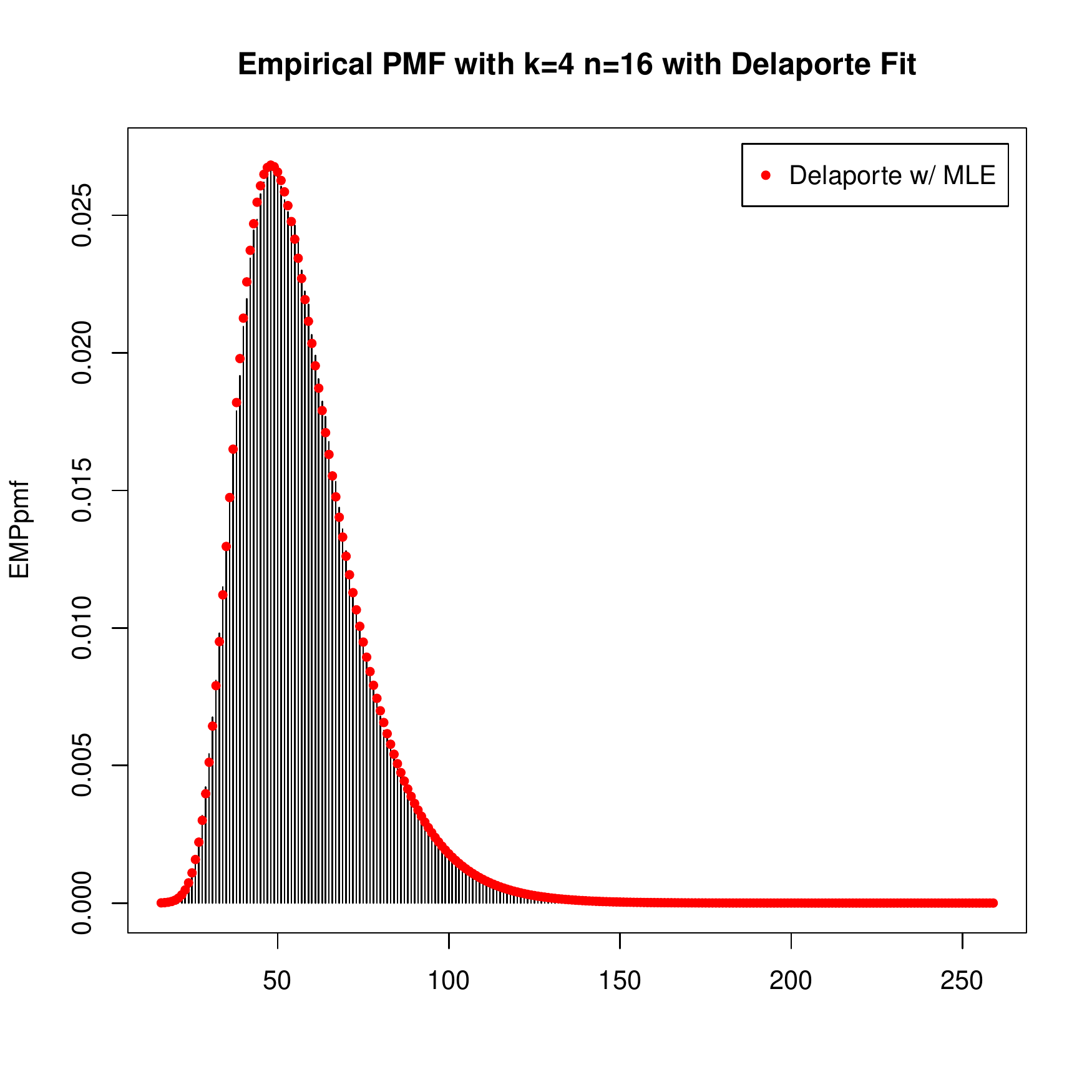} \vspace{-.4em}\\
			Sample size = 1M & Sample size = 1M & Sample size = 1M\\ 
			\includegraphics[scale=.21]{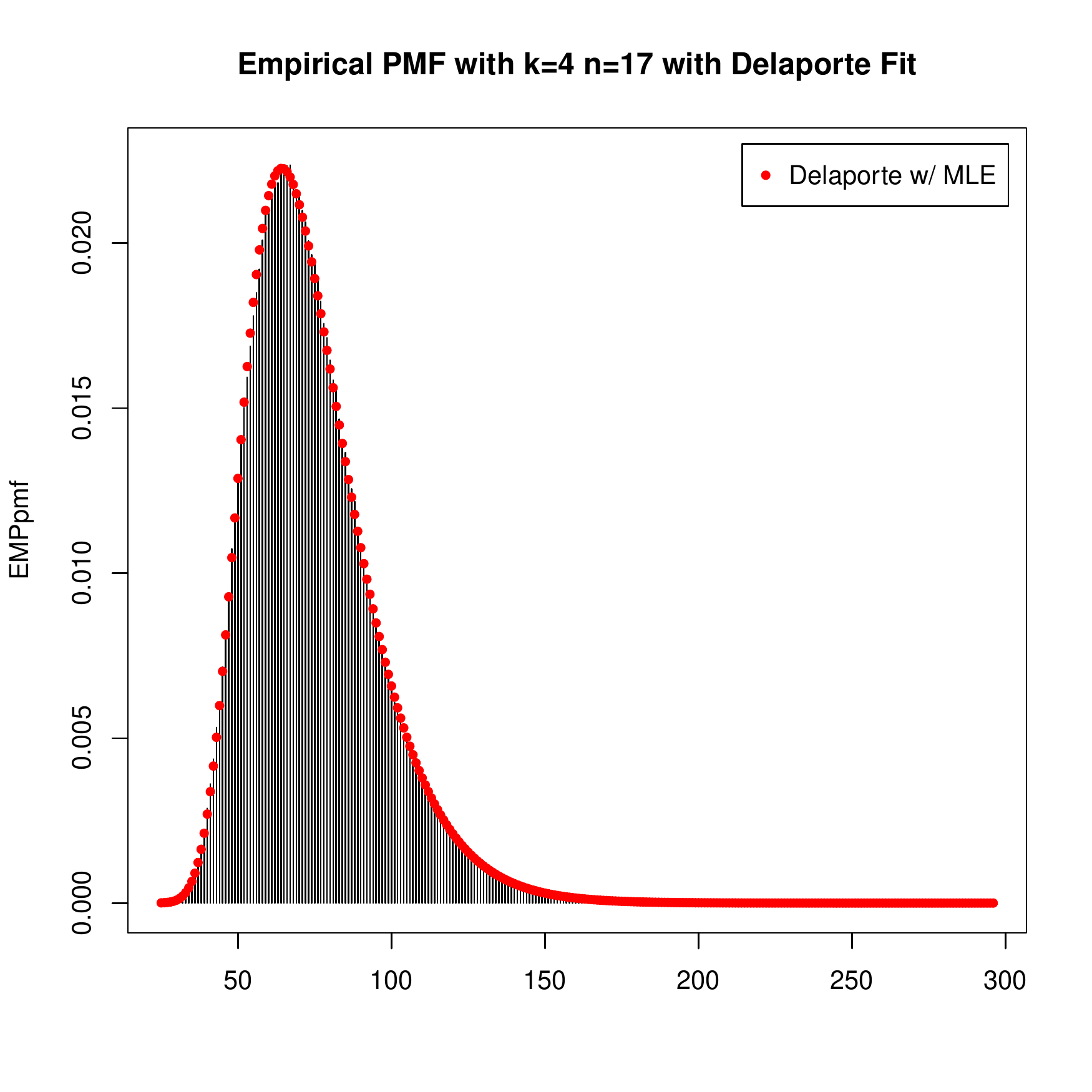} &  
			\includegraphics[scale=.21]{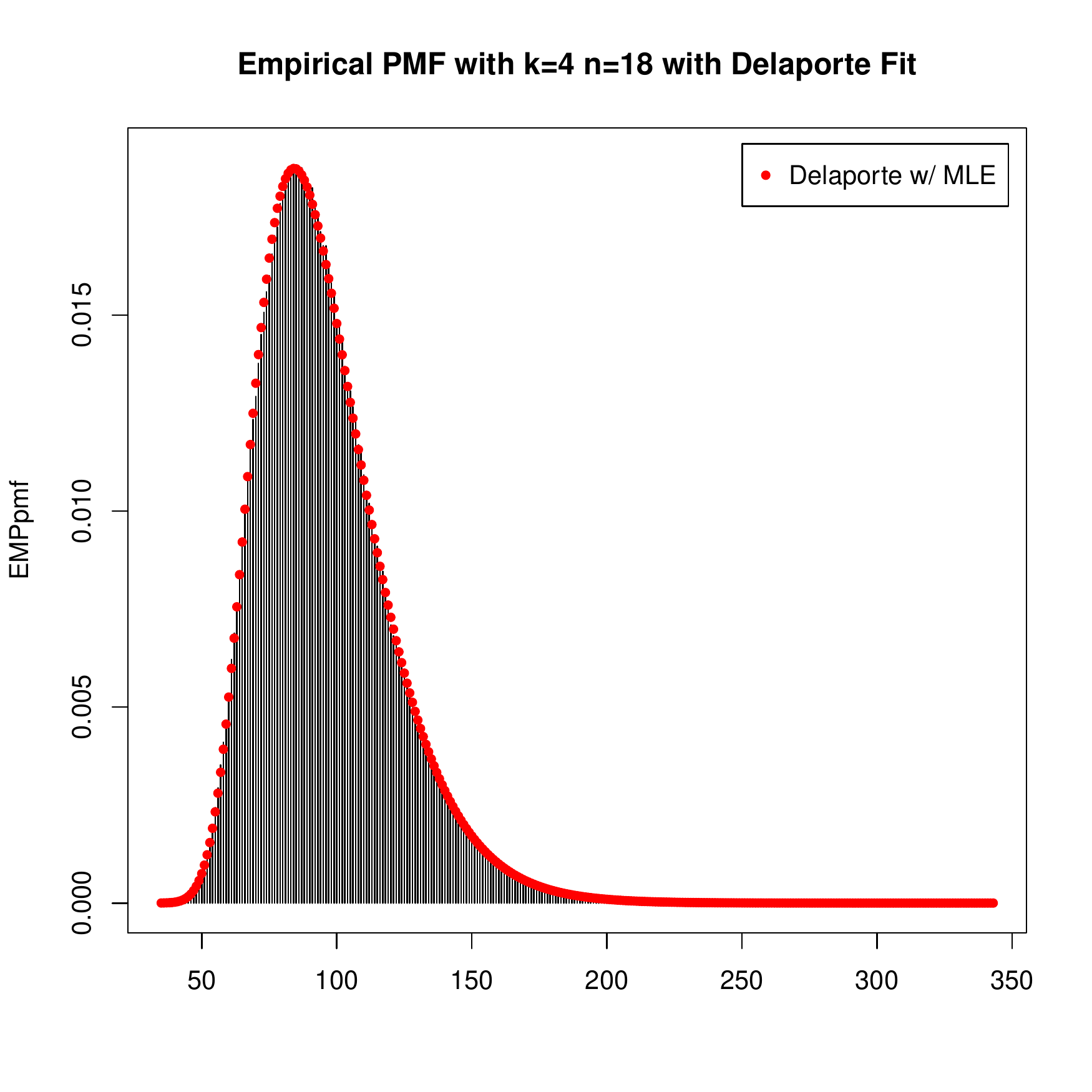} & 
			\includegraphics[scale=.21]{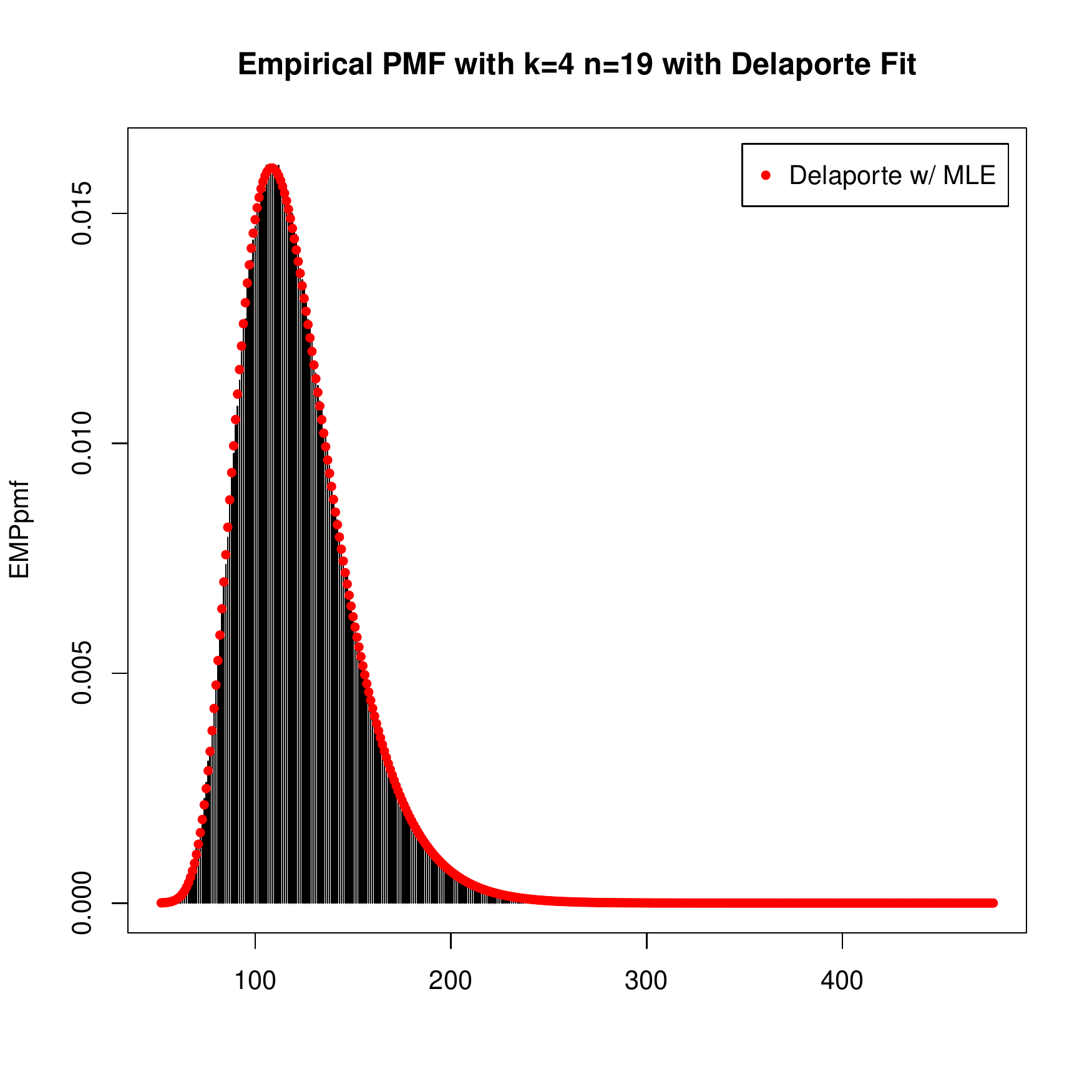} \vspace{-.4em}\\
			Sample size = 1M & Sample size = 1M & Sample size = 1M\\ 
			\includegraphics[scale=.21]{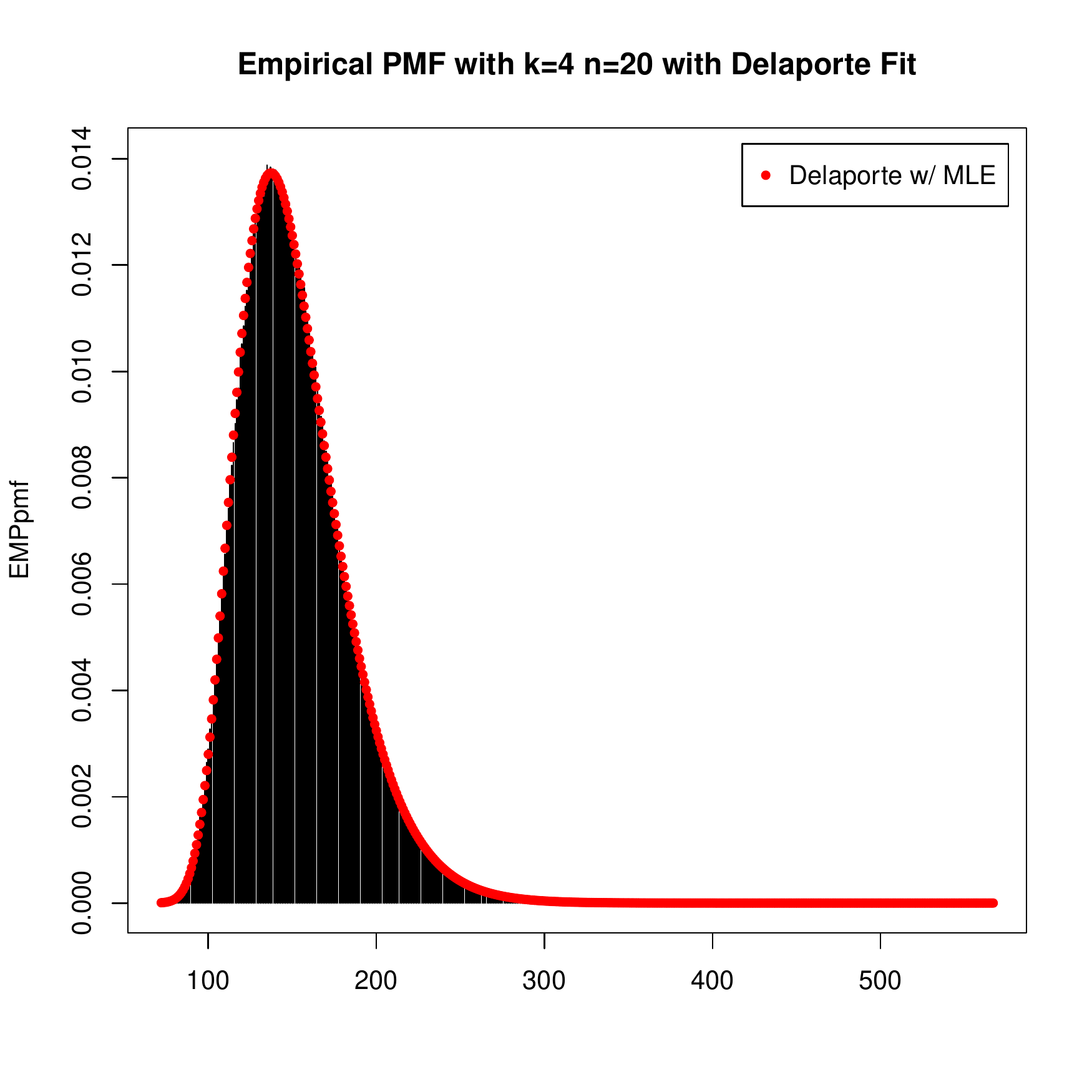} &  
			\includegraphics[scale=.21]{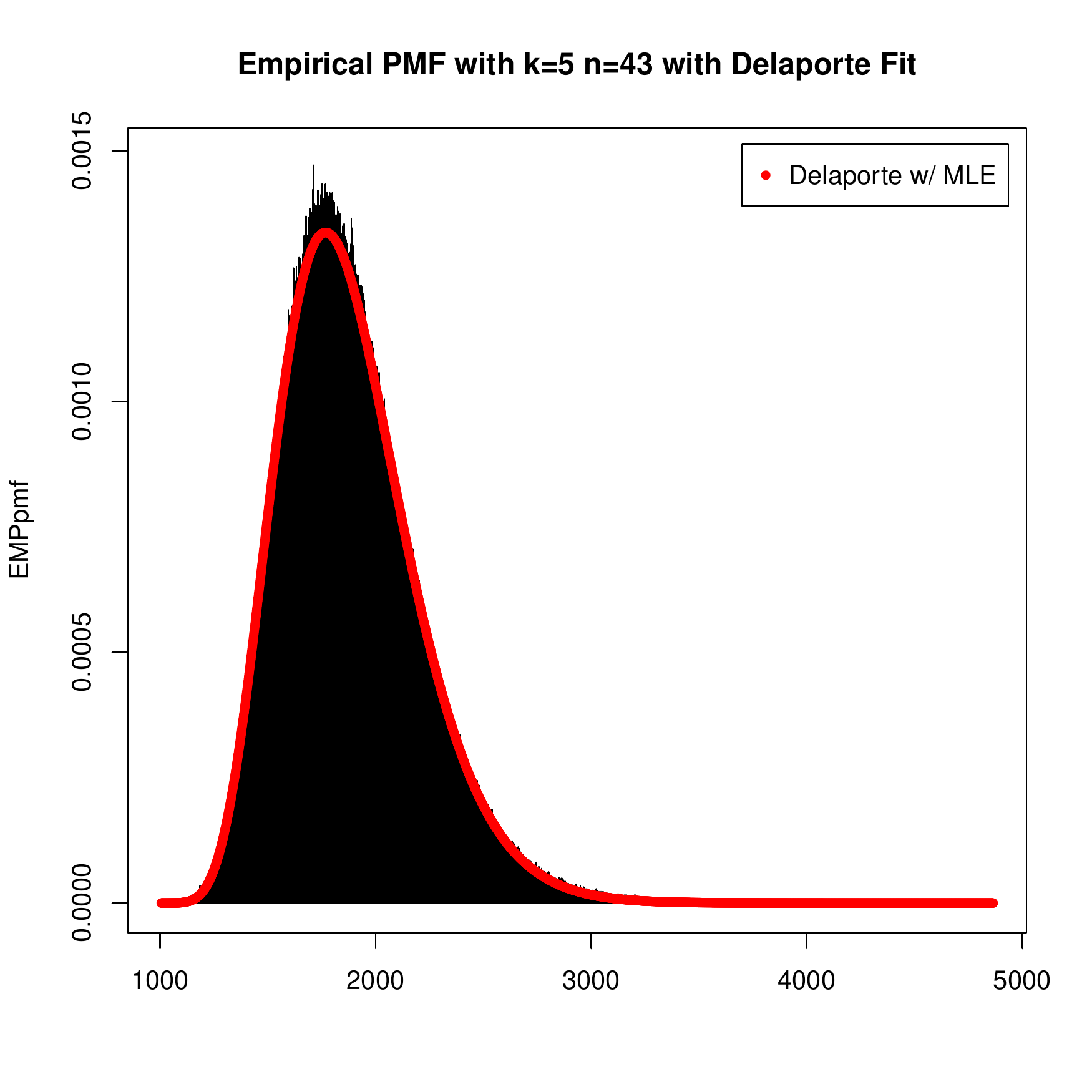} &   
			\includegraphics[scale=.21]{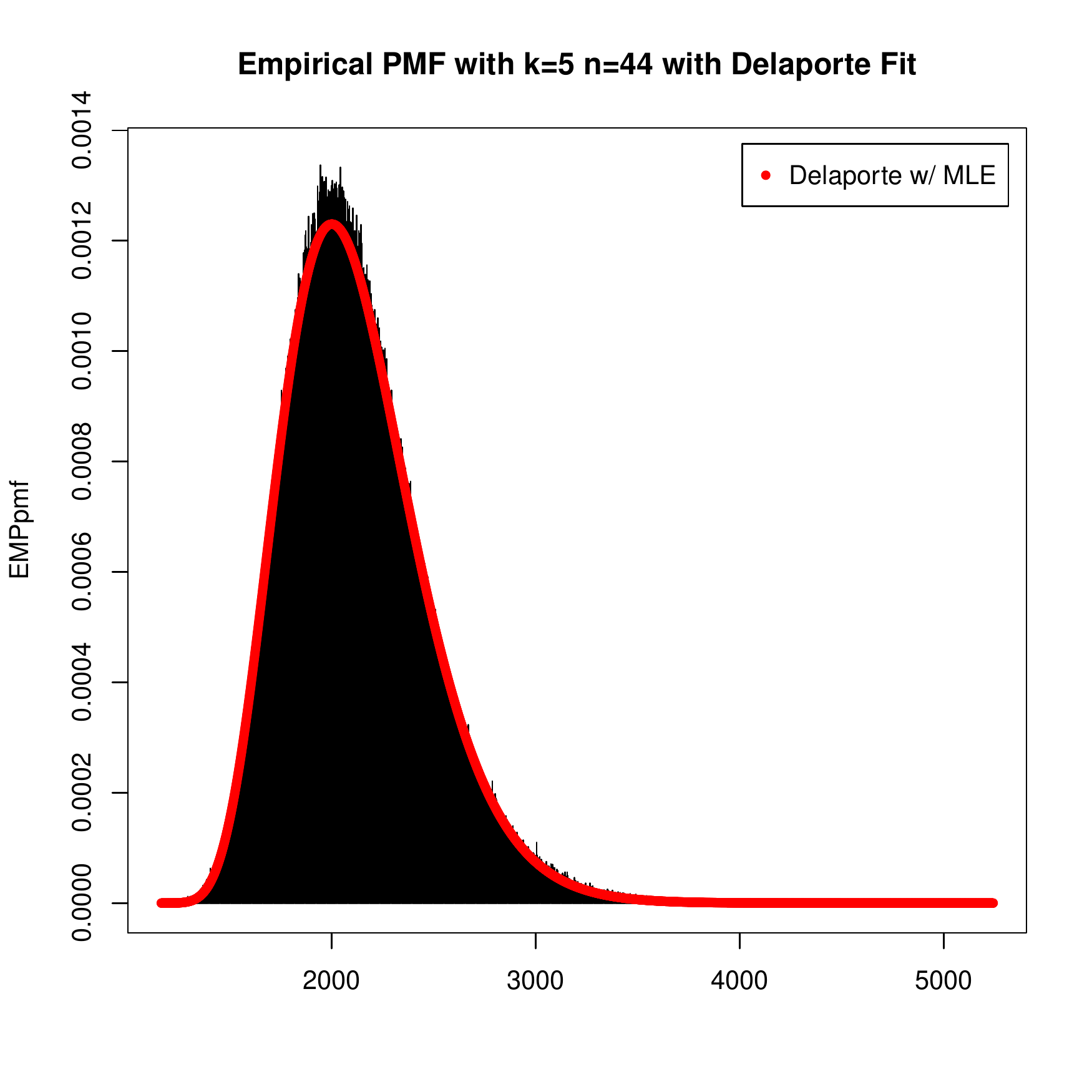} \vspace{-.4em}\\
			Sample size = 1M & Sample size = 1.1M & Sample size = 1M\\ 
			\includegraphics[scale=.21]{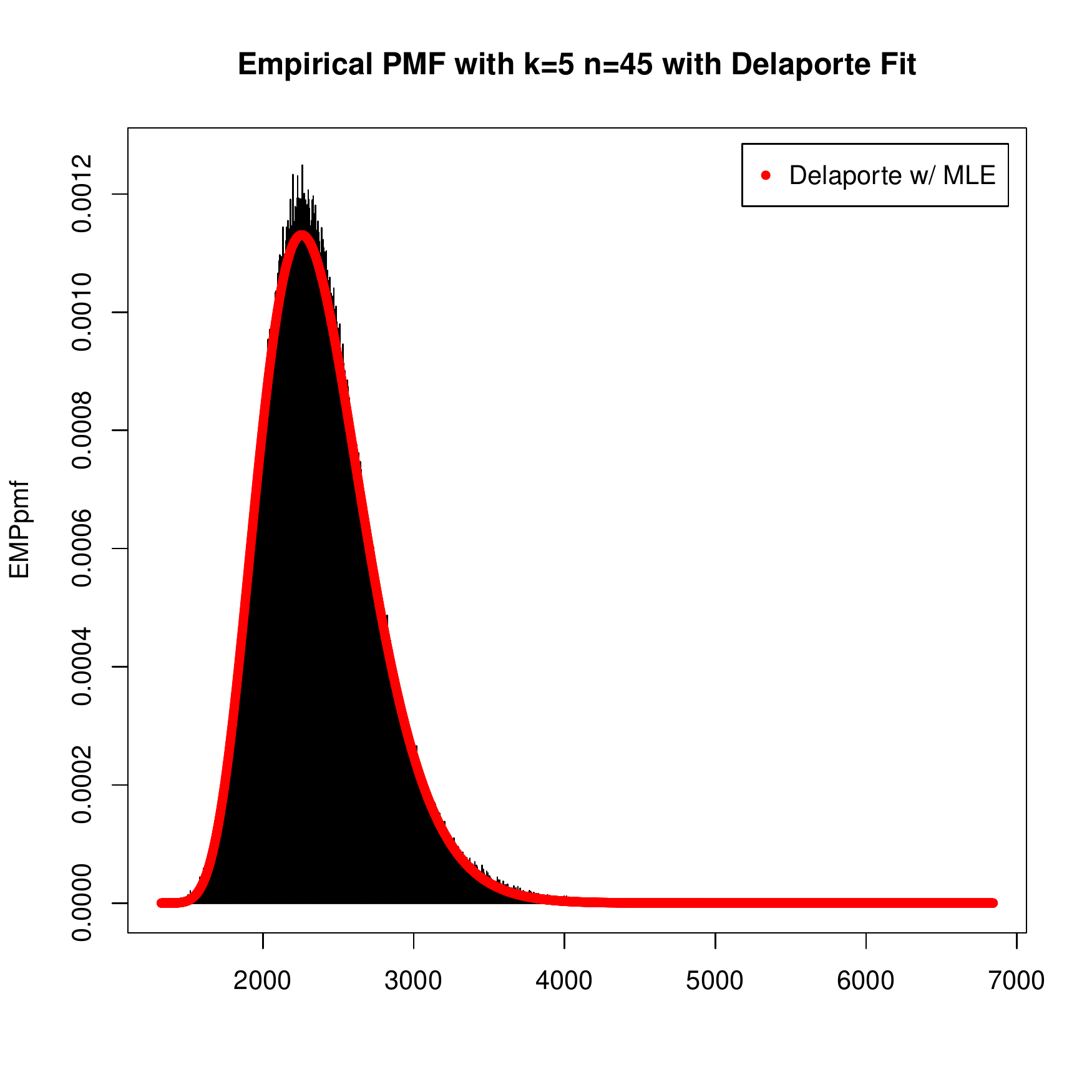} &  
			\includegraphics[scale=.21]{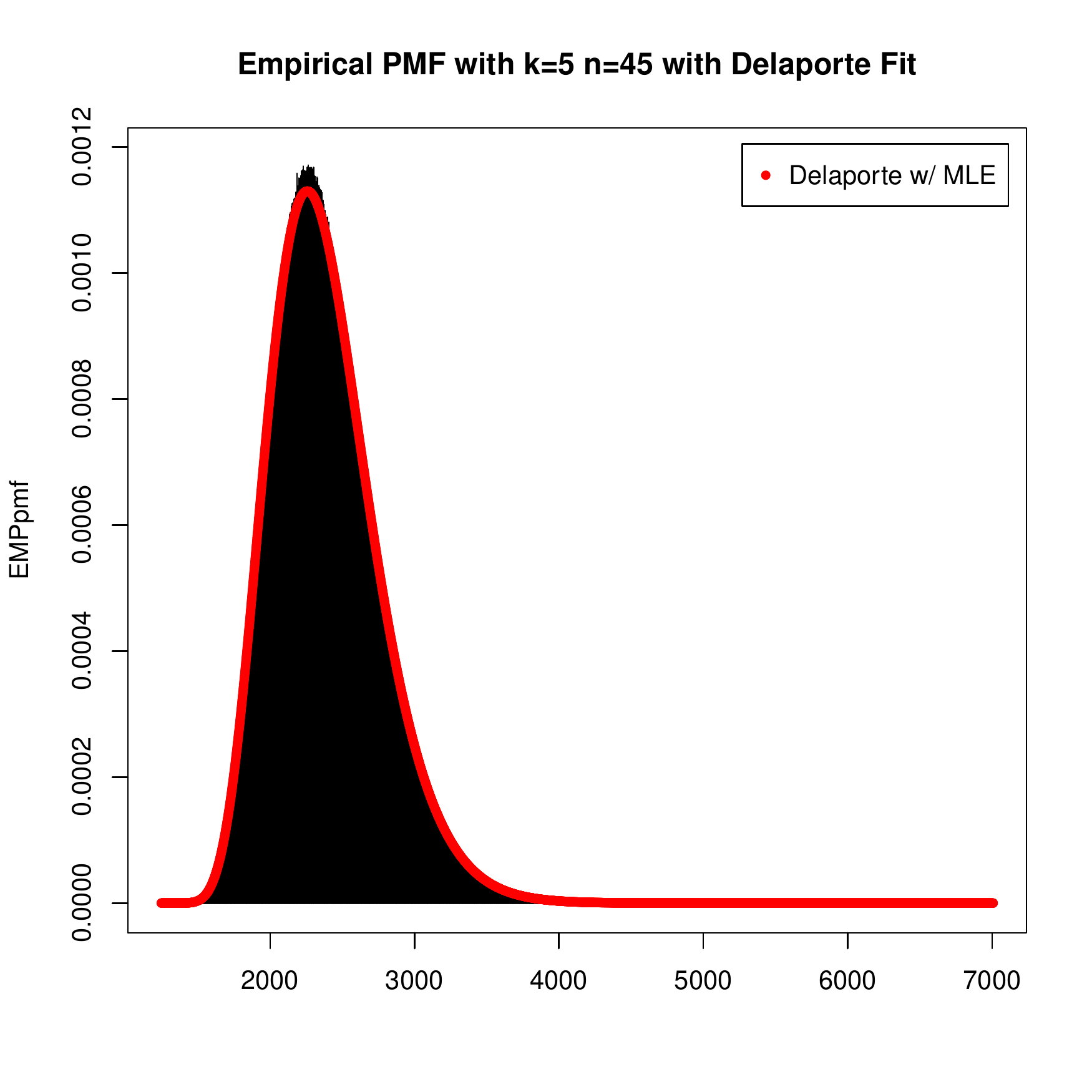} &   
			\includegraphics[scale=.21]{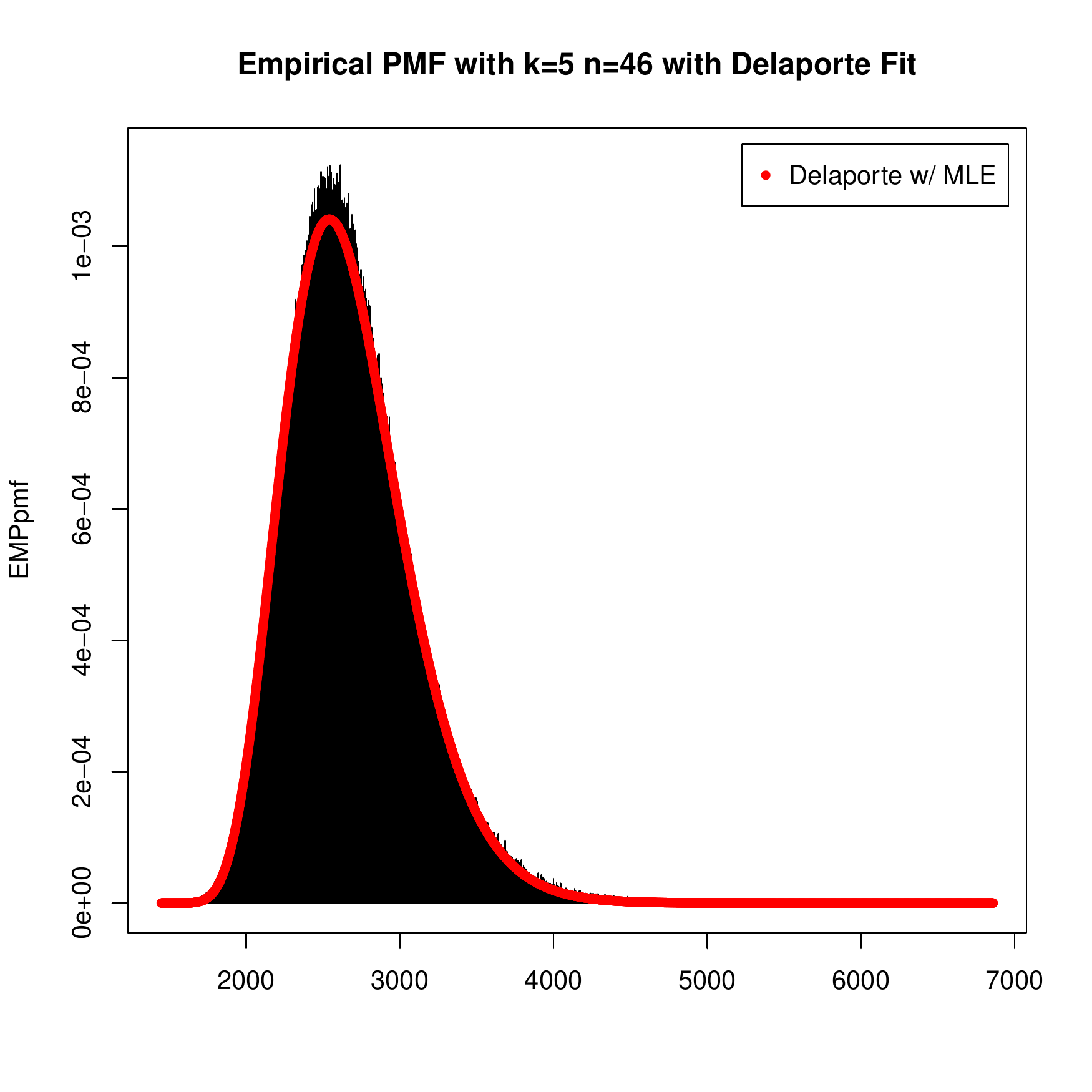} \vspace{-.4em}\\
			Sample size = 1M & Sample size = 17.4M & Sample size = 1.1M\\  
		\end{tabular}\normalsize
		\caption{Empirical pmfs for various scenarios with Delaporte Overlay}
	\end{center}
\end{figure}

In order to find the best-fitting such Delaporte distribution, we must
find good estimates for the parameters $\lambda$, $\alpha,$ and $\beta$.
The two main approaches are the maximum likelihood estimates (MLE) and
the method of moments estimates (MOM).

We have found that the MLEs consistently provide better results than the MOM estimates (this is generally true because likelihood methods
are more efficient).  In fact, these MLEs produce near-optimal results, i.e.,
a total $\ell_1$-distance between the empirical pmf and the 
MLE-estimated Delaporte distribution very near 0.  Unfortunately, closed-form formulas for the MLEs of
$\lambda, \alpha$, and $\beta$ do not exist in our situation (it should also
be noted that {\tt R}'s calculation of the MLEs for $k=5$ neared a day of
computation time for each $n$).  Hence, in the next section
we use MOM estimators for theoretical work. 
Although the MOM estimators do not provide the best fit, they are still very reasonable and relatively close to the MLEs as we show in our Simulation Study section.

A remark about the support of the Delaporte distribution as an approximation 
for the distribution of $X_k$ is in order.  We know that $X_k$ can only
take on values in $\{0,1,2,\dots,{n \choose k}\}$ while the Delaporte
distribution's support is the nonnegative integers.  The
$\ell_1$-distances  do include  the tail of the Delaporte distribution, i.e., the value
of $\sum_{j > {n \choose k}} \mathbb{P}(D=j)$.  Hence, over the
support of $X_k$, the total $\ell_1$-distance is smaller, although we will show that the difference is negligible.

Given $n,k \in \mathbb{Z}^+$, let $q = \mathbb{P}(D > {n \choose k}) = \sum_{j > {n \choose k}} \mathbb{P}(D=j)$.  Our goal is to show that
$q$ is negligible.  We will use the one-sided Chebyshev inequality:
 $\mathbb{P}(X - \mu \geq x) \leq \frac{\sigma^2}{\sigma^2+x^2},$
where $\mu = \mathbb{E}(X)$ and $\sigma^2 = $Var$(X)$.
In our situation we have $\mu = \frac{2}{2^{{k \choose 2}}}{n \choose k}$
so we let $x = \frac{2^{{k \choose 2}}-2}{2^{{k \choose 2}}}{n \choose k}+1$
to bound $P(D > {n \choose k})$.  We also know that Var$(D) = \mu + \alpha \beta^2$.  In the next section, we present evidence to suggest that 
$\sigma^2 \sim {n \choose k}n^{k-3} 2^{-k^2}$ so we will use this as an
assumption.  Putting this all together, we find that
$$
\mathbb{P}\left(D>{n \choose k}\right) \lesssim \frac{1}{2^k k^k n^3}.
$$
In practice, we have observed -- for small $k$ and $n$ -- that this bound
is quite weak.  Nevertheless, this does show that the tail probabilities
of our Delaporte distributions are quite negligible.

\section{Implications and Evidence}

\noindent
As stated before,
it was shown in \cite{G} that, loosely speaking, $X_k$ is asymptotically
Poisson.  This was done by showing that the total $\ell_1$-distance between
the distributions of
$X_k$ and a Poisson random variable with mean $\mathbb{E}(X_k)$ tends to
0 as $k$ tends to infinity (with $n$ bounded from above by a
function of $k$).  We will first
show that the proposed Delaporte distribution is consistent with this
fact.  We will be using the MOM estimates and
note that $\mathbb{E}(X_k) = \lambda + \alpha\beta$ under the method of
moments.  We use the following notation.

\begin{notation}  Let $X$ be a random variable.  We denote its moment generating 
function by mgf$(X)$; that is, mgf$(X) = \mathbb{E}(e^{tX})$.
\end{notation}

\begin{theorem} Let $n,k \in \mathbb{Z}^+$ with $k \geq 3$.  Define $D \sim$ Delaporte$(\lambda, \alpha, \beta)$, and $P \sim$ Poisson$(\lambda+\alpha\beta)$.  Then the $\mathrm{mgf}(D) \rightarrow \mathrm{mgf}(P)$ as $k \rightarrow \infty$
under the following assumptions:
\begin{center}
 $(1) \,\,\,\,\displaystyle\alpha   \sim \frac{{n \choose k}}{n^{k-1}}$;
\qquad $(2) \,\,\,\,\displaystyle\beta \sim \frac{n^{k-2}}{2^{{k \choose 2}}}$;
\qquad$(3) \,\,\,\,\displaystyle n = O\left(k^{1+\frac{1}{k-1}} \cdot2^{\frac{k}{2}}\right)$
\end{center} \label{asy}
\end{theorem}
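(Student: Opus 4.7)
The plan is to write both moment generating functions in closed form, cancel the shared Poisson factor, Taylor-expand the remainder, and then bound a tail series using assumptions (1)--(3). The first step is immediate from the compound-Poisson representation behind Definition~\ref{DelDef}: conditioning on the Gamma-distributed random rate contribution $G \sim \mathrm{Gamma}(\alpha,\beta)$ (whose mgf is $(1-\beta s)^{-\alpha}$) gives
\[
\mathrm{mgf}(D)(t) \;=\; e^{\lambda(e^t-1)}\bigl(1-\beta(e^t-1)\bigr)^{-\alpha}.
\]
Since $\mathrm{mgf}(P)(t) = e^{(\lambda+\alpha\beta)(e^t-1)} = e^{\lambda(e^t-1)}\cdot e^{\alpha\beta(e^t-1)}$, the common factor cancels and the theorem is equivalent to $\bigl(1-\beta(e^t-1)\bigr)^{-\alpha}\to e^{\alpha\beta(e^t-1)}$ as $k\to\infty$.

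I would next take logarithms: the statement becomes $-\alpha\log(1-\beta(e^t-1)) - \alpha\beta(e^t-1) \to 0$. The Taylor expansion $-\log(1-u) = \sum_{j\ge 1} u^j/j$ (valid when $|u|<1$, which will hold provided $\beta(e^t-1)$ is controlled, as it is near $t=0$ under the assumptions) shows that the $j=1$ term cancels the target $\alpha\beta(e^t-1)$ exactly, reducing the claim to
\[
\sum_{j\ge 2}\frac{\alpha\beta^j(e^t-1)^j}{j} \;\longrightarrow\; 0.
\]
For $|\beta(e^t-1)|$ bounded away from $1$, this tail is geometrically dominated by its first term $\tfrac{1}{2}\alpha\beta^2(e^t-1)^2$, so the crux of the proof is to show $\alpha\beta^2\to 0$ (after which the $j\ge 3$ terms fall into line).

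Finally, plug (1), (2), and $\binom{n}{k}\sim n^k/k!$ into $\alpha\beta^j$ to obtain
\[
\alpha\beta^j \;\sim\; \frac{n^{jk-2j+1}}{k!\,2^{j\binom{k}{2}}},
\]
and substitute the upper bound from (3) to reduce this to a purely $k$-dependent expression. Applying Stirling's formula to $k!$ then recasts the problem as a count of the $k$-, $2$-, and $e$-exponents. The main obstacle is this delicate exponent bookkeeping: one must verify that the $k$-power from $n^{jk-2j+1}$ is beaten by $k!$, with the residual factor $2^{-k/2}$ (produced by $2^{(jk-2j+1)k/2}/2^{jk(k-1)/2}$) absorbing the remaining slack. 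The form $k^{1+1/(k-1)}$ in (3) is tuned precisely so these exponents line up; once the $j=2$ case is settled, a uniform geometric bound handles $j\ge 3$ and sends the full tail to zero, proving $\mathrm{mgf}(D)\to \mathrm{mgf}(P)$.
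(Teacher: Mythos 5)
Your opening moves match the paper's: both write $\mathrm{mgf}(D)=e^{\lambda(e^t-1)}\bigl(1-\beta(e^t-1)\bigr)^{-\alpha}$, cancel the Poisson factor, take logarithms, and linearize $-\log(1-u)$ so that the $j=1$ term cancels $\alpha\beta(e^t-1)$. The argument breaks at the step where you declare the crux to be $\alpha\beta^2\to 0$. Under assumptions (1) and (2), $\alpha\beta^2\sim \binom{n}{k}n^{k-3}2^{-2\binom{k}{2}}\sim n^{2k-3}/(k!\,2^{k(k-1)})$; up to a factor of $4$ this is exactly the excess variance $\mathrm{Var}(X_k)-\mathbb{E}(X_k)$ appearing in Lemma \ref{ExpVar} and Table 2, and it \emph{diverges} in the regime the theorem targets. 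Concretely, when $n$ is of the order permitted by (3), say $n\asymp k\,2^{k/2}$, one has $n^{2k-3}\asymp k^{2k-3}2^{k^2-3k/2}$, hence $\alpha\beta^2\asymp k^{2k-3}2^{-k/2}/k!$, and since $k!\le k^k$ this is at least of order $k^{k-3}2^{-k/2}\to\infty$. So the exponent bookkeeping you defer to the end cannot work out: the $k$-power $k^{2k-3}$ contributed by $n^{2k-3}$ is \emph{not} beaten by $k!$, and the leftover $k^{k-3}$ swamps the residual $2^{-k/2}$ you were counting on. A second, related failure: since $\beta\to\infty$ under (2), for any \emph{fixed} $t>0$ we eventually have $\beta(e^t-1)>1$, so the series $-\log(1-u)=\sum_{j\ge1}u^j/j$ is invalid and in fact $\mathrm{mgf}(D)(t)$ ceases to exist (the negative binomial factor converges only for $t<\ln(1+1/\beta)$). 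There is no fixed neighborhood of $0$ on which your reduction can be run.

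The paper avoids both problems by letting the $t$-range shrink with $k$: it restricts to $t\le\ln\bigl(1+\tfrac{1}{\beta^2}\bigr)$, so that $u=\beta(e^t-1)\le 1/\beta\to0$. After cancelling the linear term, the error is then at most $\alpha\sum_{j\ge2}\beta^{-j}/j=O(\alpha/\beta^2)$, and $\alpha/\beta^2\approx 2^{k(k-1)}/(k!\,n^{2k-5})\to0$ once $n$ is of order $2^{k/2}$ or larger. In short, the quantity that must vanish is $\alpha/\beta^2$, not $\alpha\beta^2$, and the price paid is that convergence of the mgfs is only asserted on a shrinking interval containing $t=0$ (which the paper then deems sufficient). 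To repair your write-up, replace ``$|\beta(e^t-1)|$ bounded away from $1$'' by this explicit $k$-dependent restriction on $t$ and rerun your tail bound with $u\le 1/\beta$; as written, your plan reduces the theorem to a false statement.
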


\begin{proof}
Since $D$ is a convolution of a
Negative Binomial random variable with success probability $\frac{\beta}{1+\beta}$ and mean $\alpha \beta$
and a Poisson random variable with mean $\lambda$, using the moment generating functions
of these, we easily have
$$
\mathrm{mgf}(D) = \frac{e^{\lambda (e^t-1)}}{(1 - \beta(e^t-1))^\alpha}.
$$
Isolate the denominator and use $\ln(1+x) \approx x$ for small $x$.
Since   $\beta \rightarrow \infty$ as $k,n \rightarrow \infty$, with
the restriction $t \leq \ln\left(1+\frac{1}{\beta^2}\right)$
we have $\beta(e^t-1) \leq \frac{1}{\beta},$ so that $\beta(e^t-1)$ is small
for sufficiently large $\beta$.  Hence,
$\ln \left((1 - \beta(e^t-1))^\alpha\right) \approx -\alpha \beta (e^t-1)$.
 For large $n$ and $k$, this gives 
$
(1 - \beta(e^t-1))^\alpha \approx e^{-\alpha \beta (e^t-1)},
$
for $t \leq \ln\left(1+\frac{1}{\beta^2}\right)$.  Hence, we find that
mgf$(D) \approx e^{(\lambda +\alpha\beta)(e^t-1)} = $ mgf$(P)$ on a
small interval including $t=0$, which is enough to conclude the result.
\end{proof}

Having Theorem \ref{asy} and knowing that there is a one-to-one correspondence between
random variables and their moment generating functions, we can state that, loosely,
the Delaporte random variable is asymptotically Poisson.  Hence, we are not
violating the ``Poisson Paradigm," as noted in \cite{AS}.

We will now give evidence to suggest that the assumptions in Theorem \ref{asy} are
satisfied. We will be using MOM estimates.
We know that $\mu=\mathbb{E}(X_k) = {{n \choose k}}{2^{1-{k \choose 2}}}$.
Using Zeilberger's Maple package {\tt SMCramsey} that accompanies \cite{Z} we find
the leading terms for
the second and third moments about the mean for $X_k$ for small $k$:
\small
\begin{center}
\begin{tabular} { |c|l|l|} 
\hline
$k$& $\mathbb{E}((X-\mu)^2)$& $\mathbb{E}((X-\mu)^3)$  \\
\hline \hline
&&\\[-5pt]
3 & $\displaystyle {n \choose 3} \cdot \frac{3}{2^{4}}$ & $\displaystyle {n \choose 4} \cdot \frac{6n}{2^{6}}$ \\[12pt]
4 & $\displaystyle {n \choose 4} \cdot \frac{12n}{2^{10}}$ & $\displaystyle {n \choose 4} \cdot \frac{24n^3}{2^{15}}$ \\[12pt]
5 & $\displaystyle {n \choose 5} \cdot \frac{15n^2}{2^{18}}$ & $\displaystyle {n \choose 5} \cdot \frac{15n^5}{2^{27}}$ \\[12pt]
6 & $\displaystyle {n \choose 6} \cdot \frac{10n^3}{2^{28}}$ & $\displaystyle {n \choose 6} \cdot \frac{10n^7}{3\cdot 2^{42}}$ \\[12pt]
7 & $\displaystyle {n \choose 7} \cdot \frac{35n^4}{2 \cdot 2^{42}}$ & $\displaystyle {n \choose 7} \cdot \frac{35n^9}{6 \cdot 2^{64}}$ \\[12pt]
8 & $\displaystyle {n \choose 8} \cdot \frac{84n^5}{15\cdot 2^{56}}$ & $\displaystyle {n \choose 8} \cdot \frac{42n^{11}}{255 \cdot 2^{84}}$\\[12pt]
\hline
\end{tabular}
\vskip 5pt
{\small {\bf Table 2}: Second and third moment orders}
\end{center}

\normalsize
As noted in Definition \ref{DelDef}, for the Delaporte random variable $D$, we have
$$\begin{array}{c}
\mathbb{E}(D) = \lambda +\alpha\beta; \qquad
\mathbb{E}((D-\mu)^2) = \lambda + \alpha\beta(1+\beta);\qquad
\mathbb{E}((D-\mu)^3) = \lambda + \alpha\beta(1+3\beta+2\beta^2).
\end{array}
$$

By Lemma \ref{ExpVar} and the fact that 
$\mathbb{E}((D-\mu)^2) = \mathbb{E}(D)+\alpha\beta^2$,
we can deduce that
$\alpha\beta^2 \sim {n \choose k} \frac{n^{k-3}}{2^{2{k \choose 2}-2}}.$
Looking at the third moments in Table 2, we have evidence to suggest that
$2\alpha\beta^3 \sim {n \choose k} \frac{n^{2k-5}}{2^{3{k \choose 2}-3}}.$
Taking the ratio of these last two expressions yields
$$
\alpha \sim \frac{{n \choose k}}{n^{k-1}} \quad \mbox{and} \quad
\beta \sim \frac{n^{k-2}}{2^{{k \choose 2}}}.
$$

\noindent
{\bf Remark.} The interested reader can obtain more accurate results by
using the following MOM formulas calculated in Mathematica:
\small
$$\widehat{\lambda} = \dfrac{2 m^2 v+2 v f-m \left(2 f+v^2 \left(s^2 v+7\right)\right)+6 v^3}{4 m^2-12 m v+v^2 \left(9-s^2 v\right)};$$
\vskip 5pt
\noindent
$$\widehat{\alpha} = \dfrac{-4 (m-v)^4}{4 m^2(m-4v)\!-\!f(4m-6v)\!+\!s^2v^3(m-v)
\!+\! v^2(21m-9v)}; \qquad\widehat{\beta} = \dfrac{-2 m^2+f+5 m v-3 v^2}{2 (m-v)^2},$$
\vskip 5pt
\noindent
\normalsize
where $m$ is the sample mean, $v$ is the sample variance,  $s$ is the  sample skewness, and we use the notation
$f=\sqrt{s^2 v^3 (m-v)^2}$. Note that we calculate the sample skewness using the default {\tt R} command \texttt{skewness()}. A summary of sample skewness calculations can be found \cite{JG}.

\begin{remark} We are clearly extrapolating in our formulas for $\alpha$ and $\beta$, but it is
interesting to note that the order of $n$ required in Theorem \ref{asy} is actually
slightly better (by a factor of $k^{\frac{1}{k}}$)
than the best known lower bound on $R(k,k)$, while a larger order for $n$ would
void our proof of the asymptotic Poisson nature of $X_k$ via
the Delaporte distribution.  Might this be evidence that $k2^{\frac{k}{2}}$
is the correct order for the Ramsey number $R(k,k)$?
\end{remark}

\section{Simulation Study: MLE vs. MOM}

\noindent
Though the MOM estimators can be calculated in closed-form, these estimators are quite complicated and the derivation of their expected value is impractical. MLEs are more burdensome and so far have been calculated numerically using the {\tt optim} function in {\tt cran R} \cite{R}. 

Due to the nature of these estimators, a discussion on their accuracy and precision are difficult as closed-form expectations are impractical to calculate. To explore and compare the accuracy and precision of the MOM estimators and MLEs, consider their asymptotic behavior across three simulations motivated by estimators from the $k=4, n=14$; $k=5, n=20$; and $k=5, n=49$ graphs.

Tables 3 and 4 contain the results of these simulations of Delaporte data with parameter values as well as MOM estimates and MLEs across increasing sample sizes $n$. As $n$ increases, the MLEs quickly approach the true values, whereas the MOM estimators appear to require higher sample sizes. These simulations suggest that both the MLEs and MOM estimates might be asymptotically unbiased where, as expected, the MLEs are more efficient and have lower variability.

\begin{center}
\footnotesize
	\begin{tabular} { |l|l|c|c|c|c|c|c|} 
		\hline
		\multicolumn{2}{|c|}{}& \multicolumn{3}{c|}{MLEs}\\ \hline
		&&&&\\[-5pt]
		Parameters & $n$  & $\widehat{\alpha}$ & $\widehat{\beta}$ & $\widehat{\lambda}$\\
		\hline \hline
		\begin{tabular}{@{}l@{}}$\alpha=1.84$ \\ $\beta=7.89$ \\ $\lambda=16.75$\\\end{tabular} 
& 
\begin{tabular}{@{}l@{}}100\\1000\\10000\\100000
\end{tabular}
&
\begin{tabular}{@{}l@{}}
2.03 (1.40)\\1.87 (0.39)\\1.85 (0.10)\\1.84 (0.03)\\
\end{tabular}
& 
\begin{tabular}{@{}l@{}}8.90 (3.41)\\7.92 (0.91)\\7.88 (0.27)\\7.89 (0.09)\\\end{tabular}
& 
\begin{tabular}{@{}l@{}}16.96 (3.65)\\16.70 (1.22)\\16.74 (0.36)\\16.74 (0.11)\\
\end{tabular}
\\\hline
		
\begin{tabular}{@{}l@{}}
$\alpha=3.74$ \\ $\beta=15.46$ \\ $\lambda=93.57$\\\end{tabular} 
& \begin{tabular}{@{}l@{}}100\\1000\\10000\\100000\\\end{tabular}
& 
\begin{tabular}{@{}l@{}}4.55 (2.46)\\3.68 (0.75)\\ 3.74 (0.22)\\3.74 (0.08)\\\end{tabular}
& 
\begin{tabular}{@{}l@{}}15.48 (4.71)\\15.83 (1.88)\\15.46 (0.53)\\15.47 (0.18)\\\end{tabular}
& 
\begin{tabular}{@{}l@{}}90.66 (14.76)\\94.46 (5.28)\\ 93.60 (1.51)\\ 93.58 (0.55)\\\end{tabular}
\\\hline
		\begin{tabular}{@{}l@{}}$\alpha=9.45$ \\ $\beta=163.51$ \\$\lambda=2178.57$\\\end{tabular} 
& \begin{tabular}{@{}l@{}}100\\1000\\10000\\100000\\\end{tabular}&
		\begin{tabular}{@{}l@{}}11.39 (10.25)\\9.84 (2.20)\\9.56 (0.66)\\9.48 (0.18)\\\end{tabular}& 
		\begin{tabular}{@{}l@{}}183.02 (69.73)\\162.73 (19.11)\\162.66 (6.17)\\163.25 (1.67)\end{tabular}& 
		\begin{tabular}{@{}l@{}}2159.43 (560.00)\\2159.16 (154.19)\\2171.45 (49.41) \\2176.68 (14.19)\end{tabular}\\\hline
	\end{tabular}
	\vskip 5pt 
\noindent	{\small {\bf Table 3}: MLEs for Delaporte data simulated with previously
calculated parameters across various sample sizes}

\vskip 10pt
\footnotesize
	\begin{tabular} { |l|l|c|c|c|c|c|c|} 
		\hline
		\multicolumn{2}{|c|}{}& \multicolumn{3}{c|}{MOM Estimators}\\ \hline
		&&&&\\[-5pt]
		Parameters & $n$  & $\widehat{\alpha}$ & $\widehat{\beta}$ & $\widehat{\lambda}$\\
		\hline \hline
		\begin{tabular}{@{}l@{}}$\alpha=1.84$ \\ $\beta=7.89$ \\ $\lambda=16.75$\\\end{tabular} & \begin{tabular}{@{}l@{}}100\\1000\\10000\\100000\end{tabular}&
		\begin{tabular}{@{}l@{}}3.13 (2.46)\\1.98 (0.63)\\1.86 (0.17)\\1.85 (0.06)\\\end{tabular}& 
		\begin{tabular}{@{}l@{}}7.96 (4.10)\\7.89 (1.43)\\7.88 (0.44)\\7.87 (0.14)\\\end{tabular}& 
		\begin{tabular}{@{}l@{}}14.08 (5.87)\\16.41 (2.02)\\16.71 (0.63)\\16.72 (0.20)\\\end{tabular}\\\hline
		\begin{tabular}{@{}l@{}}$\alpha=3.74$ \\ $\beta=15.46$ \\ $\lambda=93.57$\\\end{tabular} & \begin{tabular}{@{}l@{}}100\\1000\\10000\\100000\\\end{tabular}&
		\begin{tabular}{@{}l@{}}6.10 (4.20)\\ 3.95 (1.23)\\3.77 (0.37)\\3.73 (0.10)\\\end{tabular}& 
		\begin{tabular}{@{}l@{}}14.06 (4.84)\\ 15.60 (2.71)\\15.44 (0.81)\\ 15.49 (0.24) \\\end{tabular}& 
		\begin{tabular}{@{}l@{}}82.30 (20.90)\\92.85 (8.31)\\93.42 (2.69)\\ 93.63 (0.70)\\\end{tabular}\\\hline
		\begin{tabular}{@{}l@{}}$\alpha=9.45$ \\ $\beta=163.51$ \\$\lambda=2178.57$\\\end{tabular} & \begin{tabular}{@{}l@{}}100\\1000\\10000\\100000\\\end{tabular}&
		\begin{tabular}{@{}l@{}}14.92 (12.81)\\10.49(4.11)\\9.62 (1.06) \\9.46 (0.28)\\\end{tabular}& 
		\begin{tabular}{@{}l@{}}173.22 (95.54)\\162.38 (29.14)\\162.60 (9.31)\\ 163.46 (2.55)\\\end{tabular}& 
		\begin{tabular}{@{}l@{}}1950.67 (709.17)\\2125.06 (275.23)\\2167.94 (79.22)\\ 2178.36 (21.67) \\\end{tabular}\\\hline
	\end{tabular}
	\vskip 5pt
\noindent {\small {\bf Table 4}: MOM estimators   for Delaporte data simulated with previously
calculated parameters across various sample sizes}

\end{center}

The results of this simulation study lend credence to the validity of using MOMs in the last section.

\section{Monochromatic Arithmetic Progressions}

\noindent
We follow a similar strategy for van der Waerden numbers. Let all possible 2-colorings of a given interval $[1,n]$ of
positive integers be equally likely and define $Y_k=Y_k(n)$ to be the random variable giving the total number of monochromatic arithmetic progressions of length $k$ in the interval.  To approximate the distribution of $Y_k$, the program {\tt APCount}\footnote[3]{Available at {\tt http://www.aaronrobertson.org}.} takes a user input of positive integers $n$, $k$, and $g$ and generates $g$ instances of the integers between $1$ and $n$ 
each having one of two colors. The color of any given integer is randomly
decided, again with Python's random module. The program counts the total number of monochromatic arithmetic progressions of length $k$
in each instance.  Over all $g$ instances, we then produce an
empirical probability mass function for $Y_k$.


Though {\tt APCount} is dealing with larger numbers than {\tt GraphCount}, it works much more quickly, given that arithmetic progressions are simpler than graphs. However, like {\tt GraphCount}, {\tt APCount} has
a run-time of $O(n^2)$ for $k=3$ and $O(n^k)$ for $k \ge 4$.

\begin{center}\small
\begin{tabular} { |c|c|c|} 
\hline
Input $k$ & Input $n$ & Time per progression (sec)  \\
\hline \hline
3 & 9 & 0.00016 \\
4 & 35 & 0.00049 \\ 
5 & 178 & 0.0226 \\
6 & 1132 & 4.9 \\
7 & 3703 & 167 \\
8 & 11495 & 4980 \\
\hline
\end{tabular}
\vskip 5pt 
{\small {\bf Table 5}: Run-times for {\tt APCount}}
\end{center}

In Figure 4, we present the empirical ps for small
$k$ and $n$, with $n$ near $w(k)$.

\begin{figure}[h!]\tiny
	\begin{center} \hspace*{-0.1in}
		\begin{tabular}{ccc}
			\includegraphics[scale=.21]{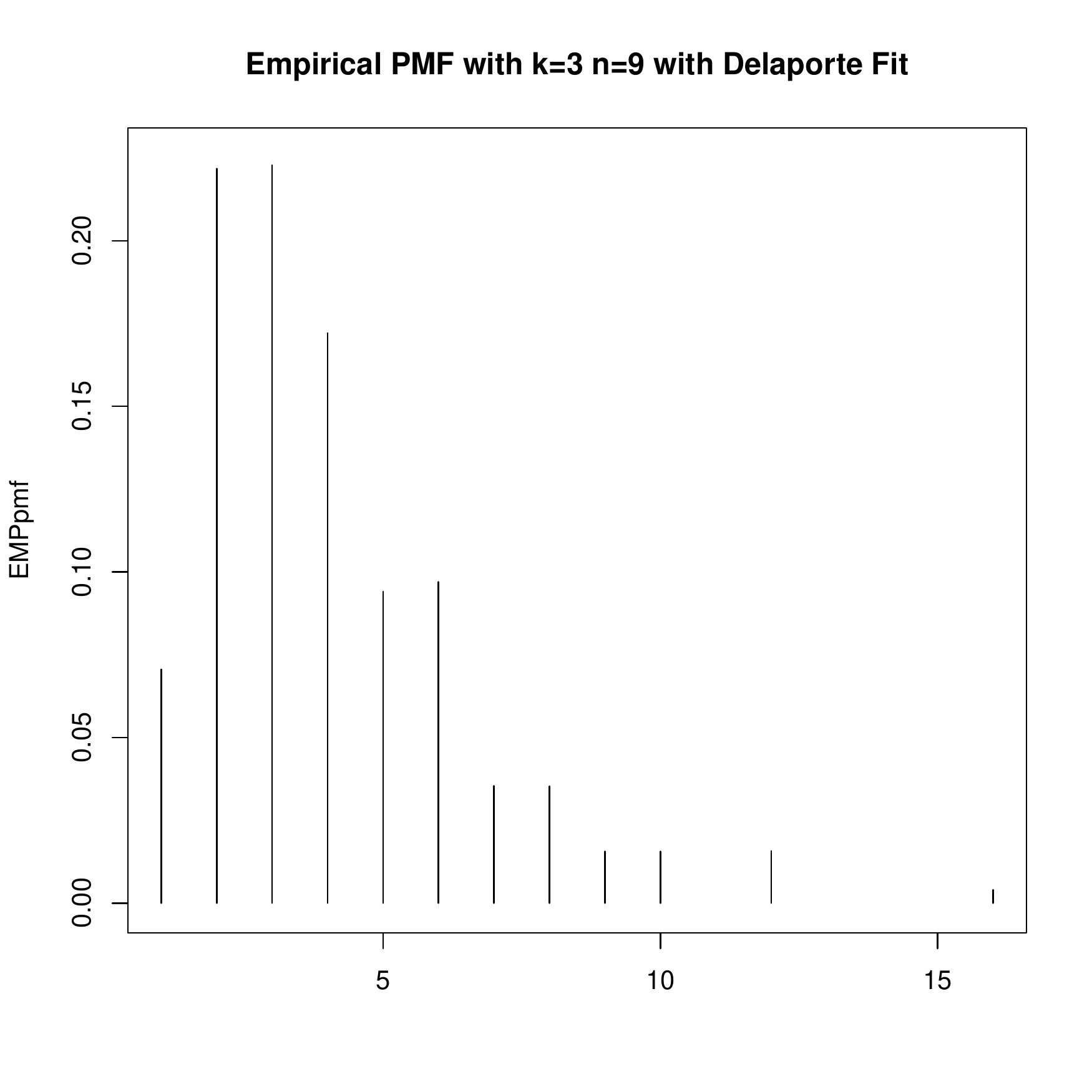} &  
			\includegraphics[scale=.21]{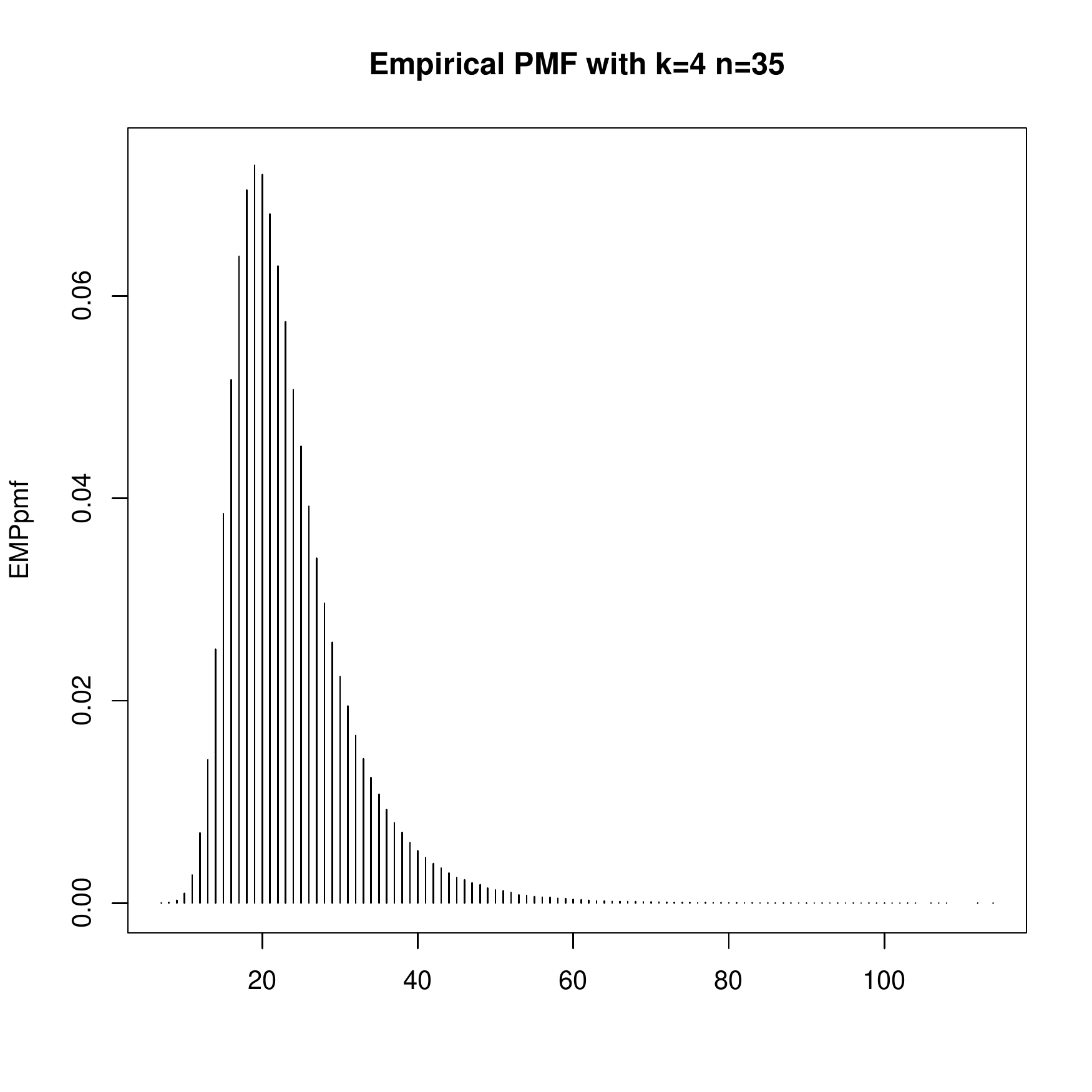} &   
			\includegraphics[scale=.21]{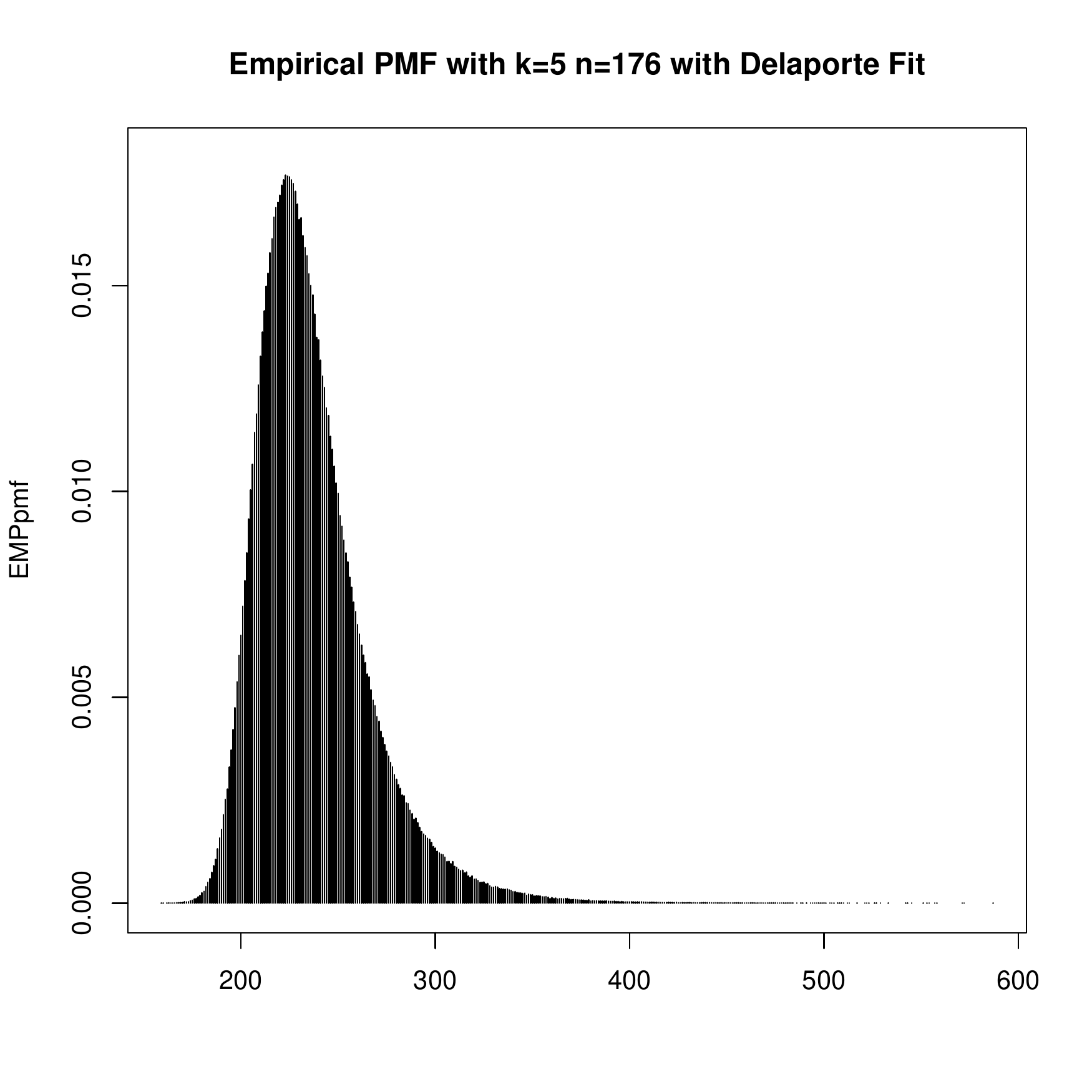} \vspace{-.4em}\\ 			
			Sample size = 1M & Sample size = 1M & Sample size = 1.7M\\
			\includegraphics[scale=.21]{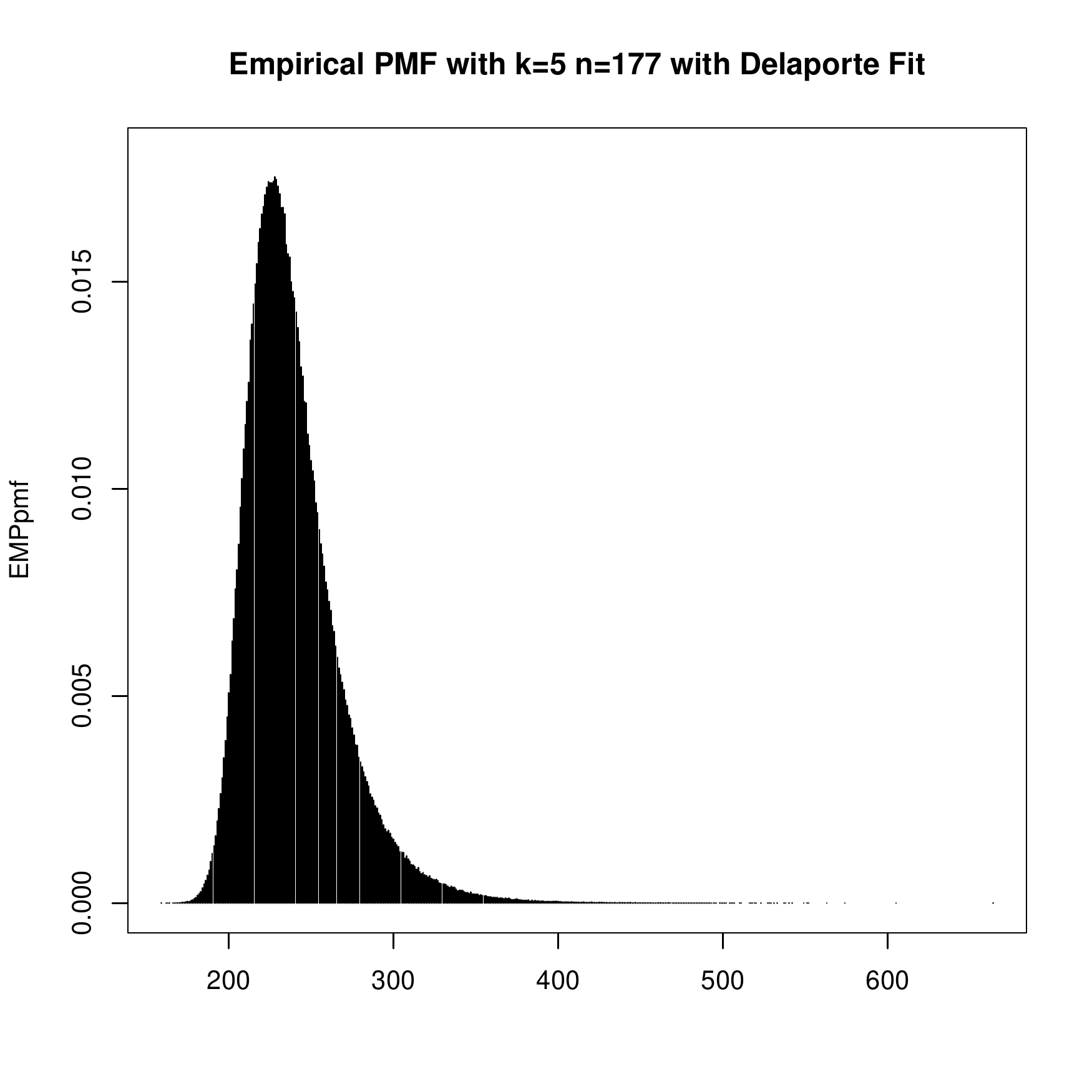} &  
			\includegraphics[scale=.21]{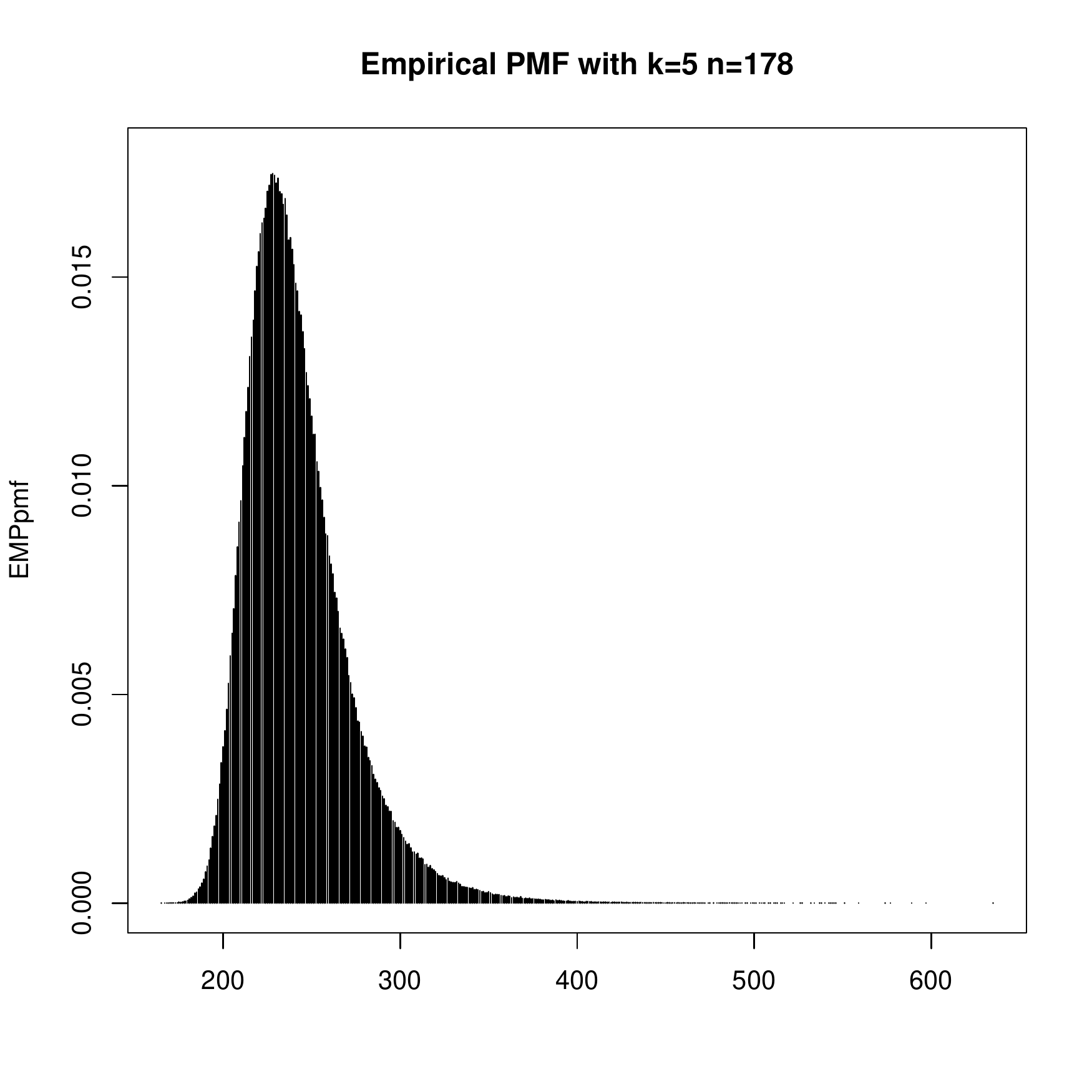} &   			
			\includegraphics[scale=.21]{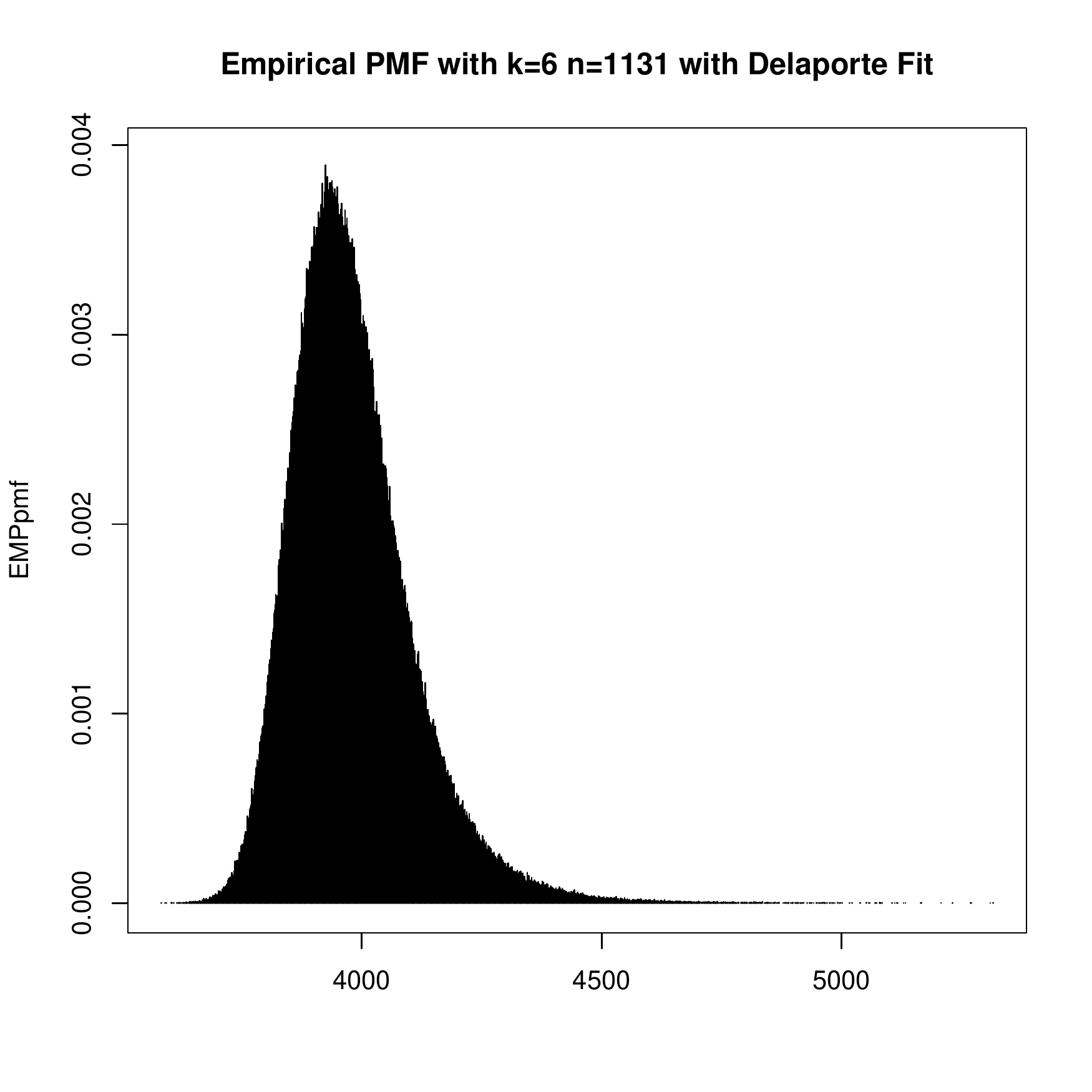} \vspace{-.4em}\\ 
			Sample size = 1.1M & Sample size = 1M & Sample size = 0.85M\\
			\includegraphics[scale=.21]{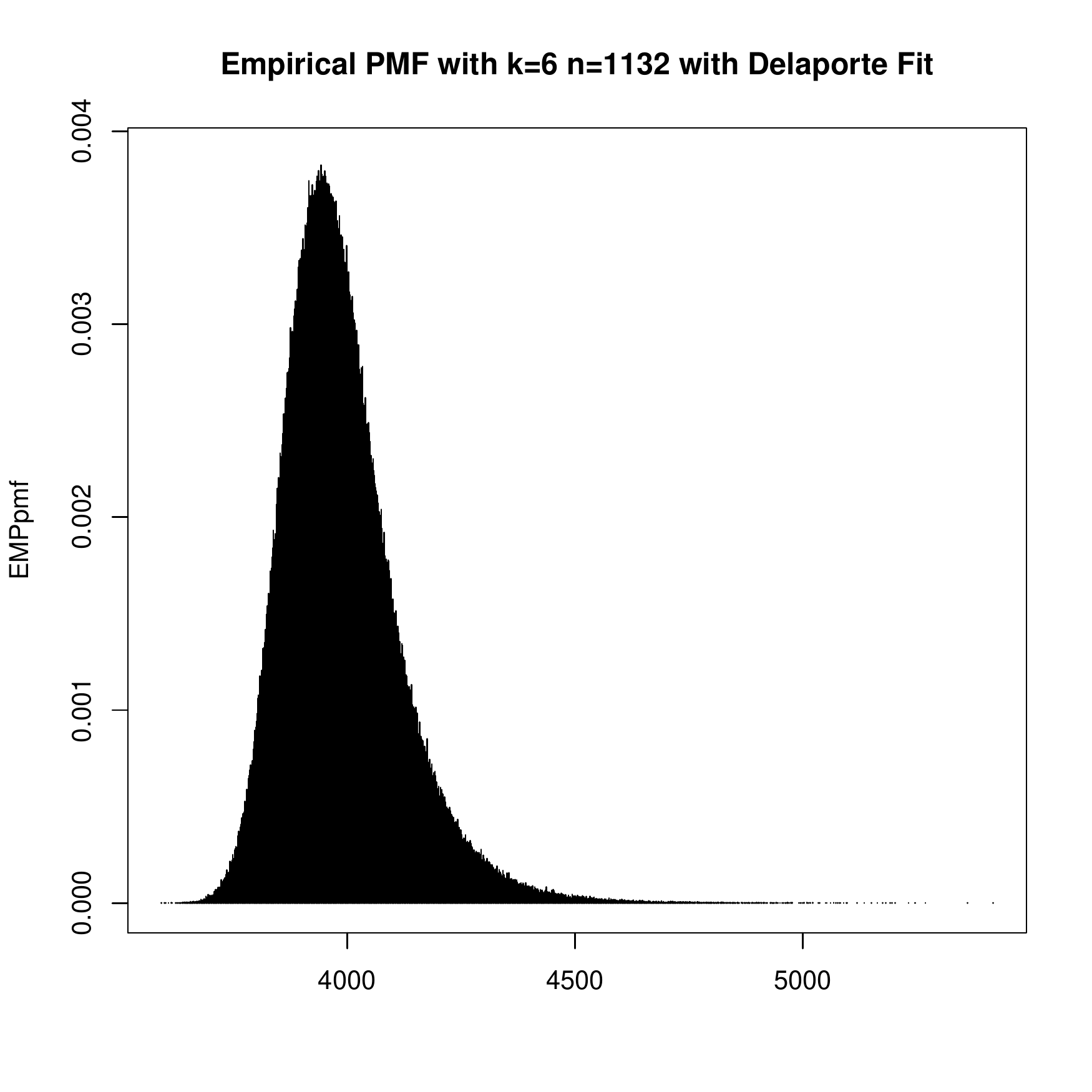} &  			
			\includegraphics[scale=.21]{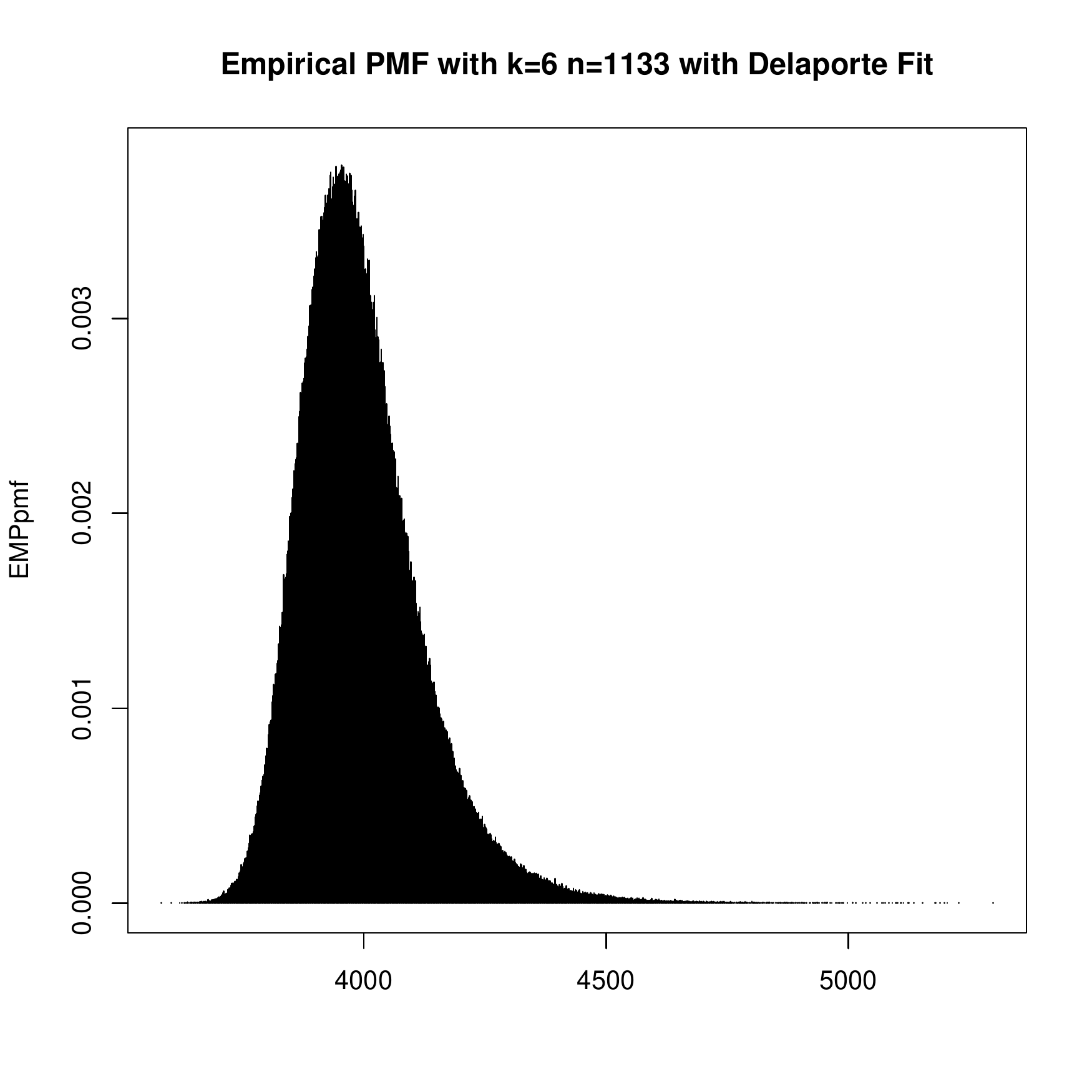} &   
			\includegraphics[scale=.21]{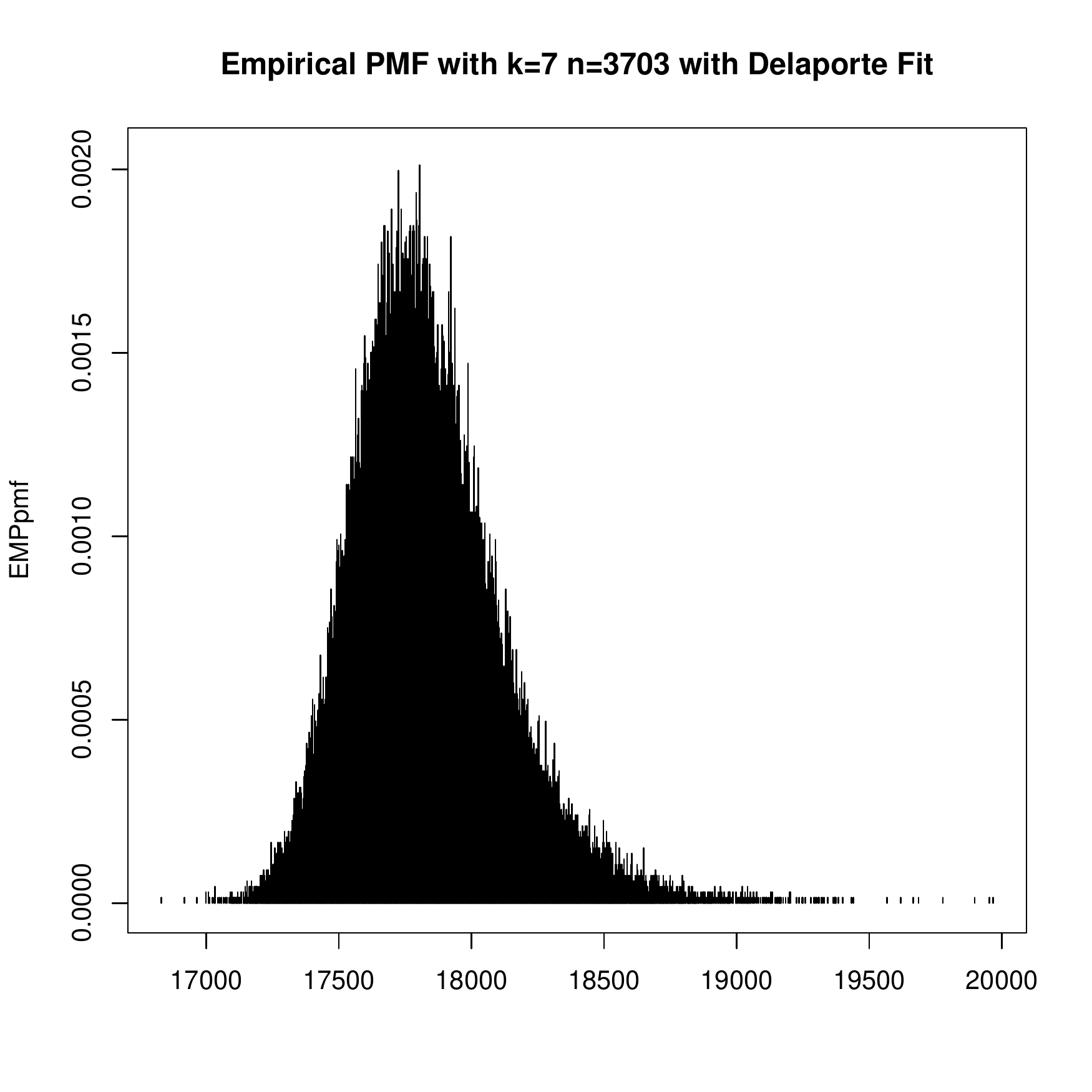} \vspace{-.4em}\\ 
		\end{tabular}
		\normalsize
		\caption{Empirical pmfs for the number of monochromatic arithmetic progressions}
	\end{center}
\end{figure}

At first blush, it is quite striking that the histograms for the number of monochromatic arithmetic progressions
have very similar shapes to the histograms for the number of monochromatic complete subgraphs.
However, our arithmetic progressions are mostly independently colored with some dependence between some pairs
of arithmetic progressions.  So, heuristically, the number of monochromatic
arithmetic progressions would be asymptotically Poisson by very similar
reasoning to the number of monochromatic subgraphs.  

We will now show that for fixed (small) $k$, the Poisson distribution is
under-dispersed for the distribution of the number of monochromatic
arithmetic progressions.  

\begin{lemma} Let $k \geq 3$.  We have $\displaystyle \lim_{n \rightarrow \infty}
\frac{\mathbb{E}(Y_k(n))}{\mathrm{Var}(Y_k(n))}<1$.
\end{lemma}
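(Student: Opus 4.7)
The plan is to decompose $Y_k=\sum_{i=1}^{N}Z_i$, where $Z_i$ is the indicator that the $i$-th length-$k$ arithmetic progression in $[1,n]$ is monochromatic and $N=N(n,k)\sim\frac{n^2}{2(k-1)}$ counts such APs. Each $Z_i$ is Bernoulli with success probability $2^{1-k}$, giving $\mathbb{E}(Y_k)=N\cdot 2^{1-k}$. I would then reduce the lemma to showing that $\mathrm{Var}(Y_k)$ exceeds $\mathbb{E}(Y_k)$ asymptotically.

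Writing $\mathrm{Var}(Y_k)=\mathbb{E}(Y_k)(1-2^{1-k})+\sum_{i\neq j}\mathrm{Cov}(Z_i,Z_j)$, the pairwise covariances are computed by overlap: if two distinct APs $A_i,A_j$ share $s=|A_i\cap A_j|$ elements, then all $2k-s$ elements of $A_i\cup A_j$ must receive a common color for both APs to be monochromatic, so $\mathbb{P}(Z_i=Z_j=1)=2^{1-2k+s}$ for $s\geq 1$, while the product of marginals equals $2^{2-2k}$. Hence $\mathrm{Cov}(Z_i,Z_j)=2^{1-2k}(2^s-2)$, which vanishes for $s\in\{0,1\}$ and is strictly positive for $s\geq 2$. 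Letting $T_s$ count ordered pairs of distinct length-$k$ APs sharing exactly $s$ elements, I would obtain
\[
\mathrm{Var}(Y_k)-\mathbb{E}(Y_k)=-\mathbb{E}(Y_k)\cdot 2^{1-k}+\sum_{s=2}^{k-1}T_s\cdot 2^{1-2k}(2^s-2),
\]
and the lemma reduces to showing the right-hand side is positive in the limit.

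For the required lower bound I would isolate \emph{shift pairs} contributing to $T_{k-1}$: for each $d\geq 1$ and $a\geq 1$ with $a+kd\leq n$, the APs $(a,a+d,\ldots,a+(k-1)d)$ and $(a+d,\ldots,a+kd)$ share the $k-1$ middle elements, so $T_{k-1}\geq 2\sum_{d\geq 1}(n-kd)_+\sim n^2/k$. Substituting, the shift contribution alone is asymptotically at least $\frac{(k-1)(2^{k-1}-2)}{k}\cdot\mathbb{E}(Y_k)\cdot 2^{1-k}$, and the multiplier $\frac{(k-1)(2^{k-1}-2)}{k}$ equals $4/3$ at $k=3$ and grows with $k$, so it exceeds $1$ for every $k\geq 3$. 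This forces $\mathrm{Var}(Y_k)\geq(1+c_k)\mathbb{E}(Y_k)$ asymptotically for some $c_k>0$, yielding $\lim_{n\to\infty}\mathbb{E}(Y_k)/\mathrm{Var}(Y_k)\leq 1/(1+c_k)<1$.

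The main obstacle is that enumerating every $T_s$ for $s\in\{2,\ldots,k-1\}$ is combinatorially unpleasant, since it amounts to counting length-$k$ APs through prescribed subsets of points and depends on divisibility of gaps. The key simplifying move is to lower bound the covariance sum using only the shift pairs in $T_{k-1}$, which admit a clean closed-form count and already give a multiplier strictly greater than $1$ uniformly in $k\geq 3$, bypassing the need for a full enumeration.
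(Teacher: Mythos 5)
Your proof is correct, and although it shares the paper's skeleton --- writing $Y_k$ as a sum of indicators $Z_i$, computing $\mathbb{E}(Y_k)=N\cdot 2^{1-k}$, and observing that only pairs of progressions sharing at least two terms contribute positive covariance --- the key counting step is genuinely different. The paper lower-bounds $\sum_{i\neq j}\mathrm{Cov}(Z_i,Z_j)$ by allotting to each progression just \emph{one} partner for each overlap size $t\in\{2,\dots,k-1\}$ and summing the resulting geometric series $\sum_{t=2}^{k-1}2^{t-2k}$, which yields a covariance contribution of order $2^{-4}\,\mathbb{E}(Y_k)$ and the explicit constant $\tfrac{16}{17}$. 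You instead work at the single overlap size $s=k-1$ but harvest the \emph{entire} family of unit-shift pairs, $T_{k-1}\gtrsim n^2/k$, giving a contribution of order $\frac{(k-1)(2^{k-1}-2)}{k}\cdot 2^{1-k}\,\mathbb{E}(Y_k)$ --- far more per overlap class. That extra strength is not idle: because you correctly keep the factor $1-2^{1-k}$ in the diagonal term $\sum_i\mathrm{Var}(Z_i)$, you must first overcome a deficit of $2^{1-k}\mathbb{E}(Y_k)$ before the variance exceeds the mean, and your multiplier $\frac{(k-1)(2^{k-1}-2)}{k}\geq\tfrac{4}{3}>1$ does exactly that for every $k\geq 3$; the paper's write-up silently replaces $1-2^{1-k}$ by $1$ in the diagonal term, so your version is the more careful of the two. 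Two cosmetic caveats: your covariance formula $2^{1-2k}(2^s-2)$ should be invoked only for $s\geq 1$ (for $s=0$ the indicators are independent outright, which is what you in fact use), and, like the paper, you really bound $\limsup_{n\to\infty}\mathbb{E}(Y_k)/\mathrm{Var}(Y_k)$ by $1/(1+c_k)<1$ rather than establishing that the limit exists --- a harmless reading of the statement that both arguments adopt.
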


\noindent
{\it Proof.}  Let $Z_i$ be the indicator function for whether or not the $i^{\mathrm{th}}$
$k$-term arithmetic progression is monochromatic so that $Y_k = \sum_{i=1}^{\frac{n^2}{2(k-1)}}Z_i$.
By the linearity of expectation, we have $\mathbb{E}(Y_k) = \sum_{i=1}^{\frac{n^2}{2(k-1)}}\mathbb{E}(Z_i) = {\frac{n^2}{2(k-1)}} \cdot \frac{1}{2^{k-1}}
= \frac{n^2}{(k-1)2^k}$.  We also have $\mathrm{Var}(Y_k) = \mathrm{Var}(\sum_{i=1}^{\frac{n^2}{2(k-1)}} Z_i)
= \sum_{i=1}^{\frac{n^2}{2(k-1)}} \mathrm{Var}(Z_i) + \sum_{i \neq j}\mathrm{Cov}(Z_i,Z_j)
= \sum_{i=1}^{\frac{n^2}{2(k-1)}} \left(\mathbb{E}(Z_i^2) - \mathbb{E}^2(Z_i)\right) +  \sum_{i \neq j} \mathrm{Cov}(Z_i,Z_j)
$.  Since $Z_i^2 = Z_i$, this simplifies to
$\mathrm{Var}(Y_k) = \sum_{i=1}^{\frac{n^2}{2(k-1)}} \left(\frac{1}{2^{k-1}} - \frac{1}{2^{2k-2}}\right)+ \sum_{i \neq j}\mathrm{Cov}(Z_i,Z_j) = \frac{2^{k-1}n^2}{(k-1)2^{2k-1}}
+ \sum_{i \neq j}\mathrm{Cov}(Z_i,Z_j)$.

We now must look at when $\mathrm{Cov}(Z_i,Z_j) \neq 0$ so that we only need consider
when $Z_i$ and $Z_j$ correspond to dependent arithmetic progressions, meaning that they
share at least one term.  Next, we note that if two arithmetic progressions $A$ and $B$
share only one term, then $\mathrm{Cov}(A,B) =\mathbb{E}(AB) - \mathbb{E}(A)\mathbb{E}(B)
= \frac{1}{2^{2k-2}} - \frac{1}{2^{k-1}}\cdot\frac{1}{2^{k-1}}= 0$.

 We have 
$\mathrm{Cov}(Z_i,Z_j) = \mathbb{E}(Z_iZ_j) - \mathbb{E}(Z_i)\mathbb{E}(Z_j)
= \mathbb{E}(Z_iZ_j) - \frac{1}{2^{2k-2}}$.  We will now give a lower bound for 
$\mathrm{Cov}(Z_i,Z_j)$.    
Given $i$, for each of those $j$ values that correspond to an arithmetic
progression that shares (exactly) $t \geq 2$ values with the
$i^{\mathrm{th}}$ arithmetic progression we have $\mathbb{E}(Z_iZ_j)=
\frac{1}{2^{2k-t-1}} - \frac{1}{2^{2k-2}} \geq \frac{1}{2^{2k-t}}$.
For each $i$, we use the trivial lower bound of $1$ for the number of values of $j$ for which the
$i^{\mathrm{th}}$ and $j^{\mathrm{th}}$ arithmetic progressions share  $t$ terms.
To prove the lemma we only need a trivial bound of $1$ value of $j$. 

Putting the above together and noting that for each $i$ we are using
$k-1$ values of $j$ based on how many common terms each has with the $i^{\mathrm{th}}$ arithmetic progression,
we get
$\sum_{i \neq j}\mathrm{Cov}(Z_i,Z_j) \geq  {\frac{n^2}{2(k-1)}} \sum_{t=2}^{k-1} \frac{1}{2^{2k-t}}
= \frac{n^2}{2(k-1)} \left(\frac{1}{2^{k+2}} - \frac{1}{2^{2k}}\right) \geq \frac{n^2}{(k-1)2^{k+4}}$.
We now are done since
$$\displaystyle \lim_{n \rightarrow \infty}
\frac{\mathbb{E}(Y_k(n))}{\mathrm{Var}(Y_k(n))} \leq \displaystyle \lim_{n \rightarrow \infty}\frac{\frac{n^2}{(k-1)2^k}}{\frac{n^2}{(k-1)2^k} + \frac{n^2 }{(k-1)2^{k+4}}} = \frac{1}{1+2^{-4}} = \frac{16}{17}<1.$$
\hfill $\Box$

\vskip 5pt
\noindent
{\bf Remark.} We were unsuccessful in our attempts to show that the limit in the above
lemma is 0 as is the case with monochromatic complete subgraphs.
\vskip 5pt

Based on the under-dispersion of the Poisson distribution compared to $Y_k$ and the
weak dependence between monochromatic arithmetic progressions, we see that we are in a  similar situation to the monochromatic subgraphs.
Hence, relying on the same heuristics, we investigate  how well the Delaporte distribution approximates these new empirical histograms
(see Figure 5, below).

\begin{figure}[h!] \tiny
	\begin{center} \hspace*{-0.1in}
		\begin{tabular}{ccc}
			\includegraphics[scale=.21]{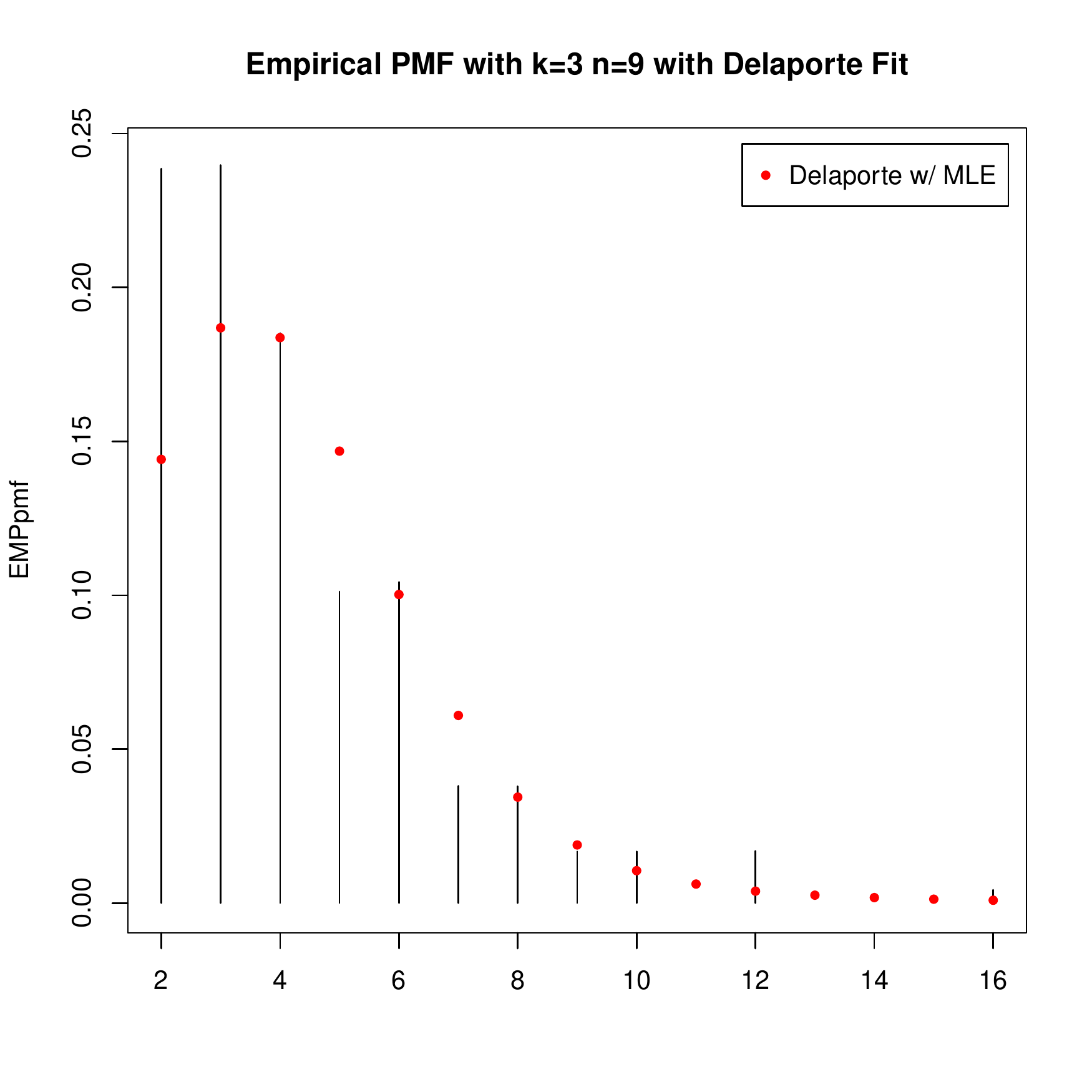} &  
			\includegraphics[scale=.21]{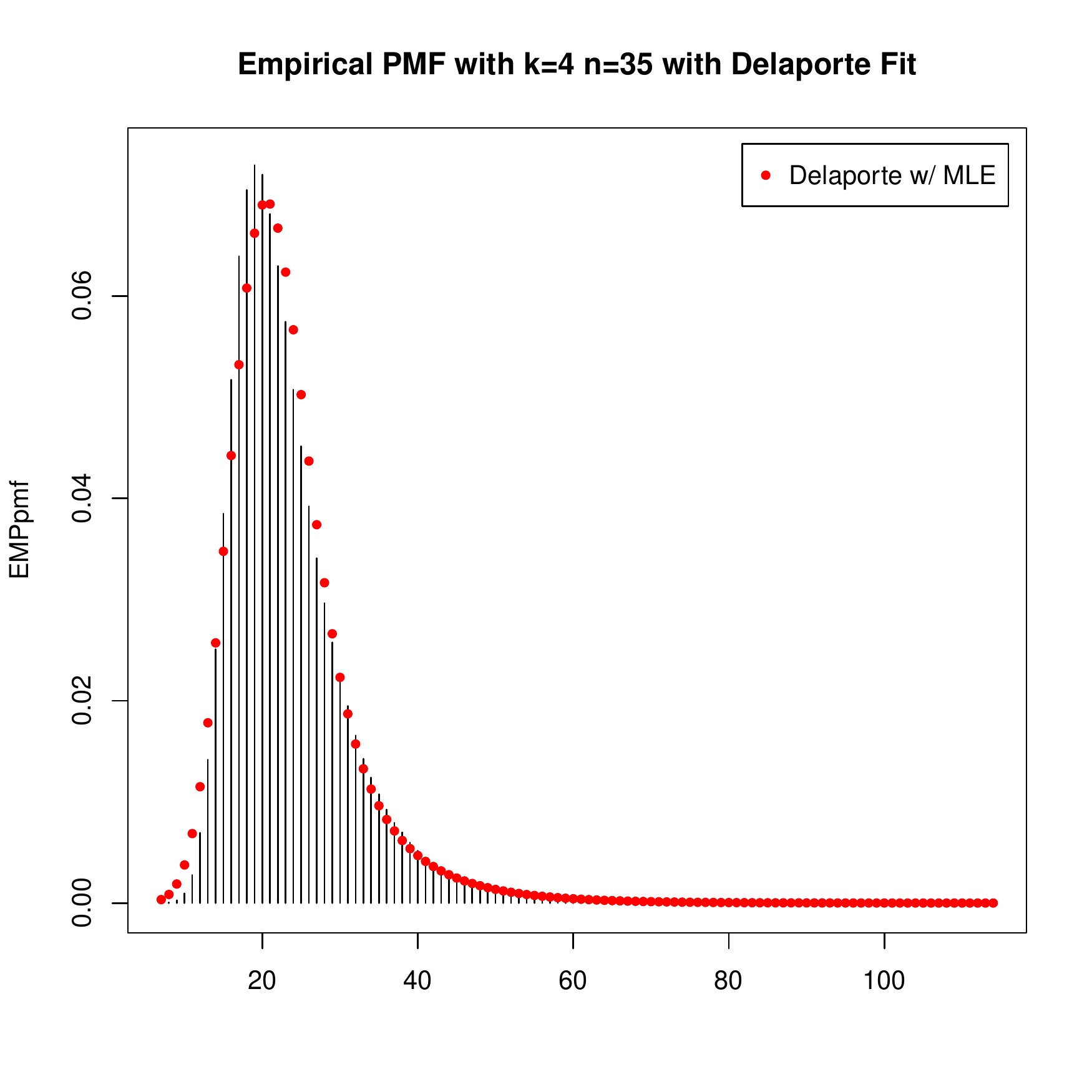} &   \includegraphics[scale=.21]{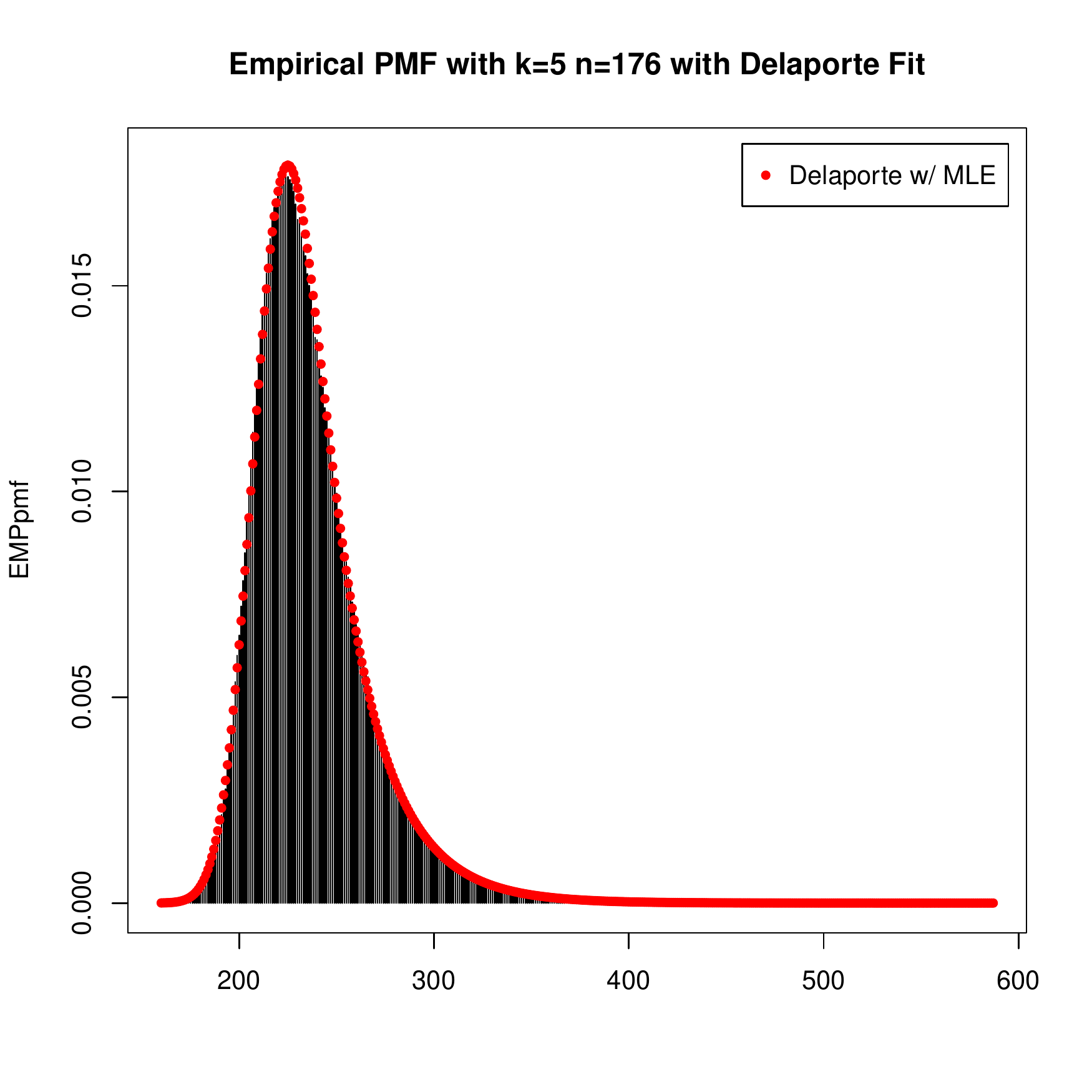} \vspace{-1em}\\ 			
			Sample size = 1M & Sample size = 1M & Sample size = 1.7M\\
			\includegraphics[scale=.21]{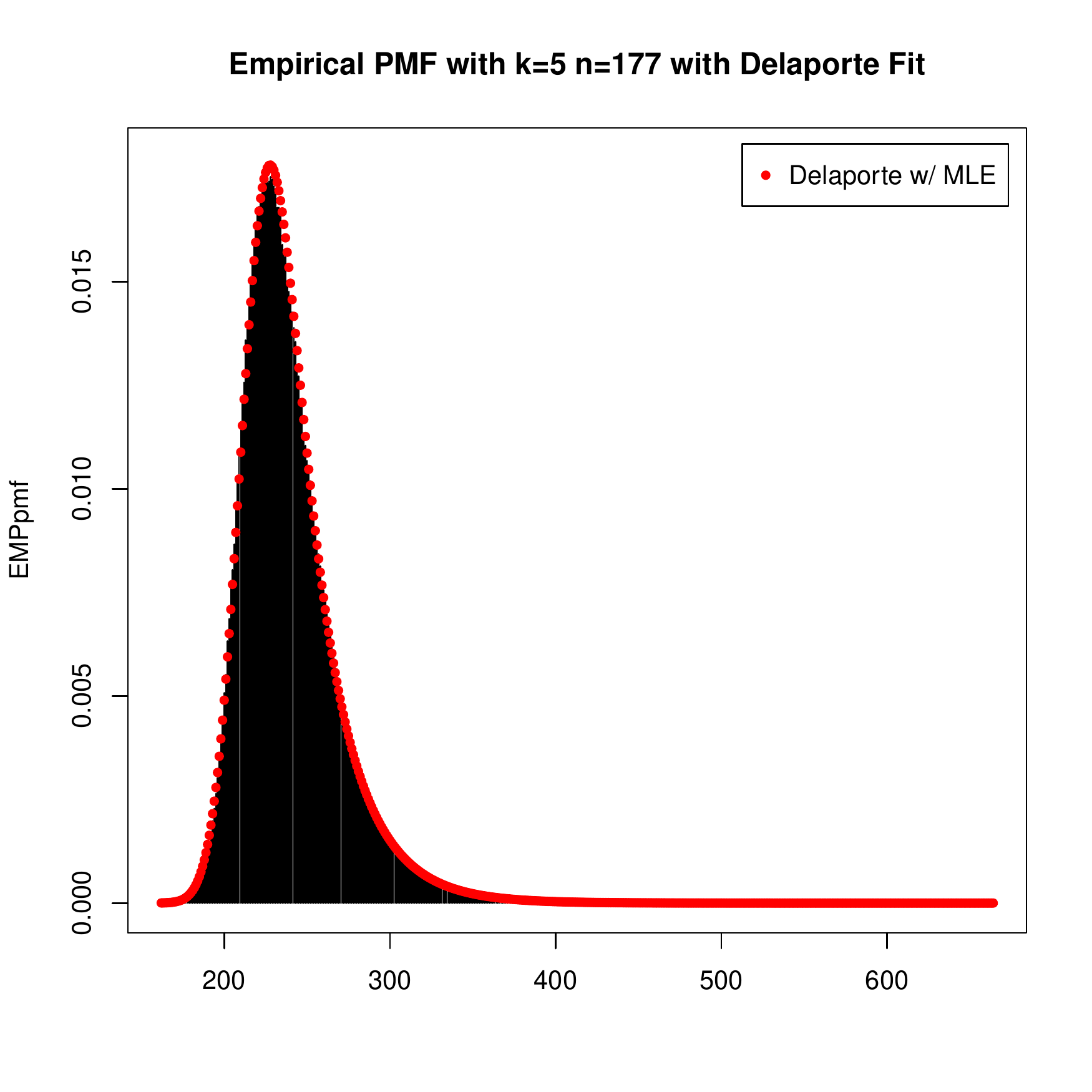} &  
			\includegraphics[scale=.21]{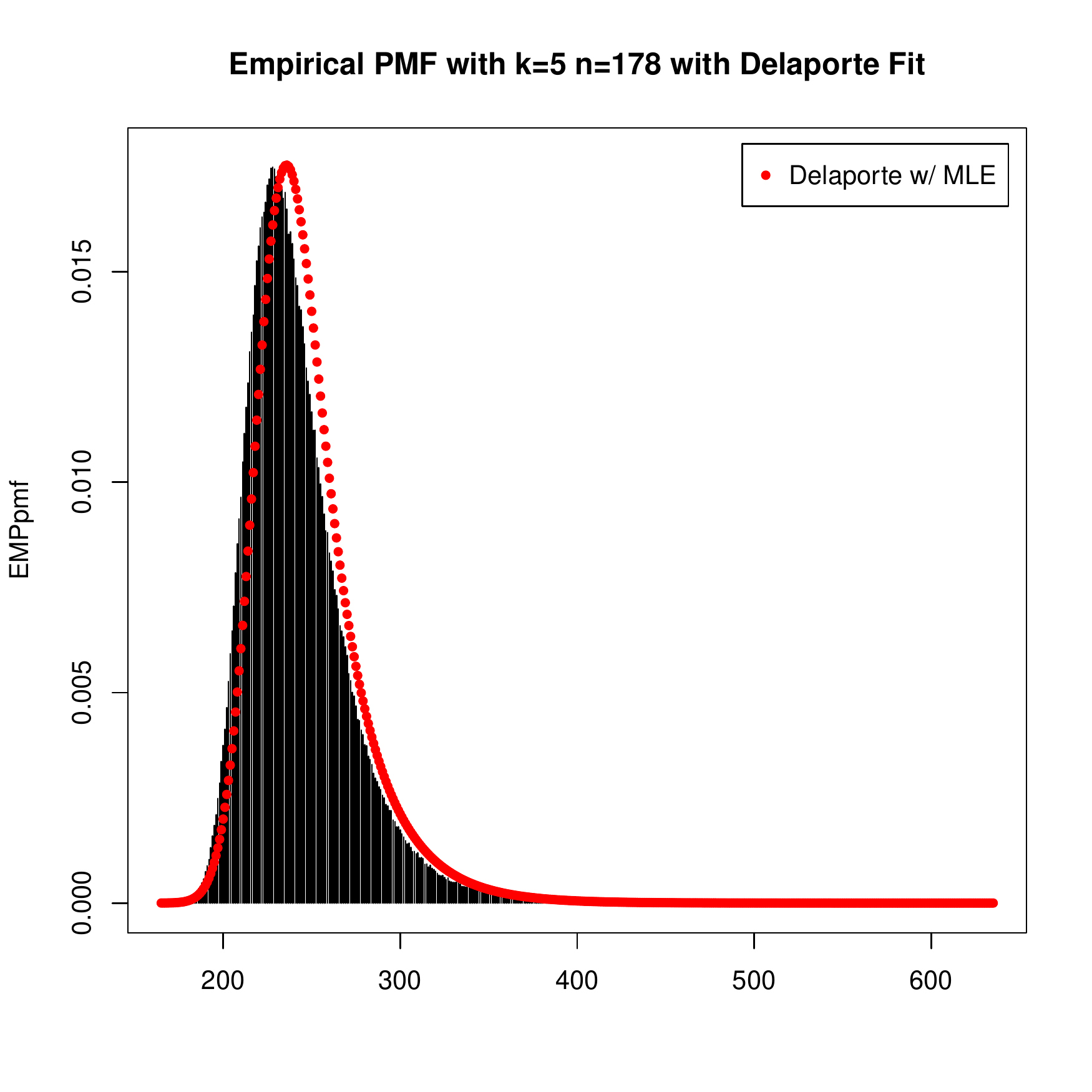} &   			\includegraphics[scale=.21]{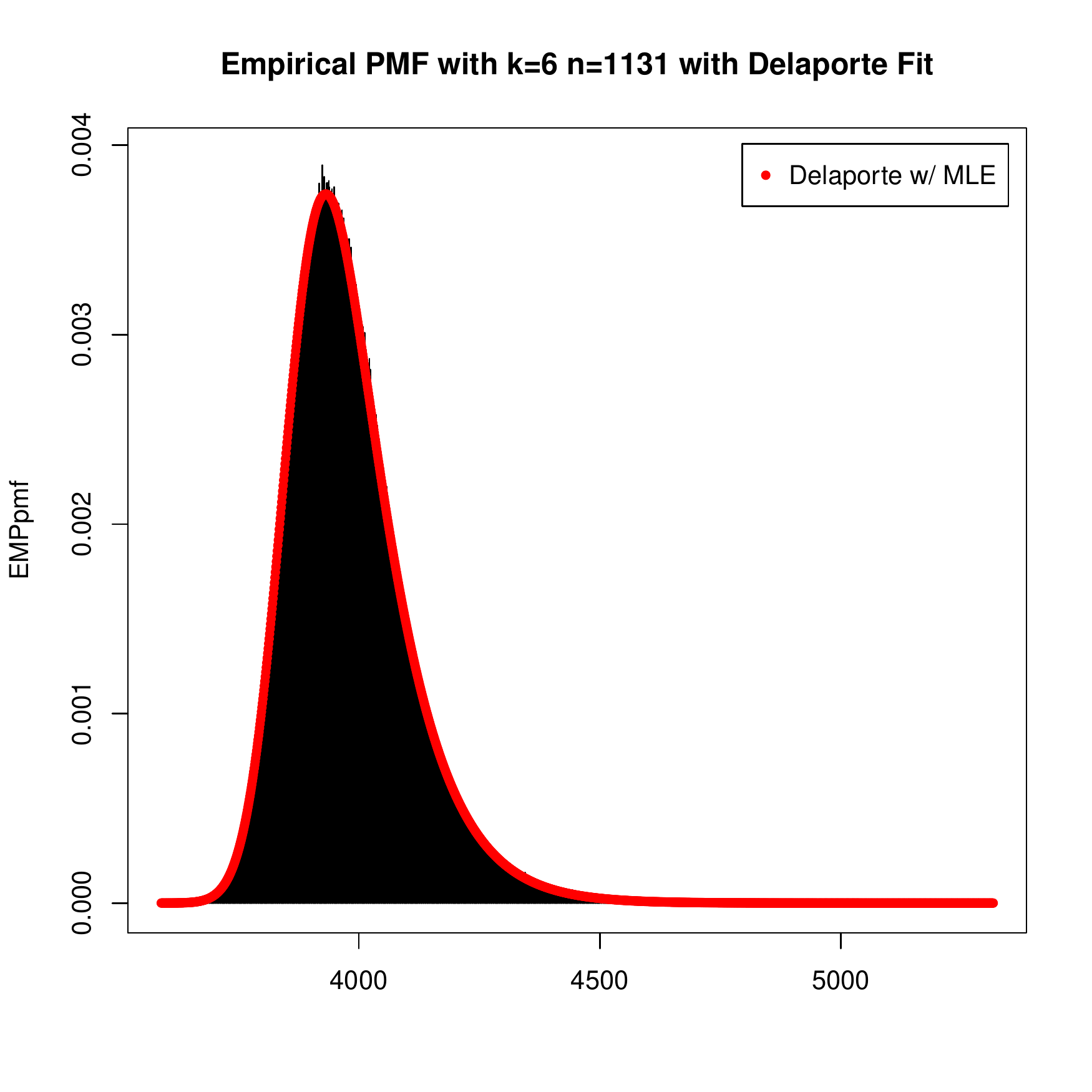} \vspace{-1em}\\ 
			Sample size = 1.1M & Sample size = 1M & Sample size = 0.85M\\
			\includegraphics[scale=.21]{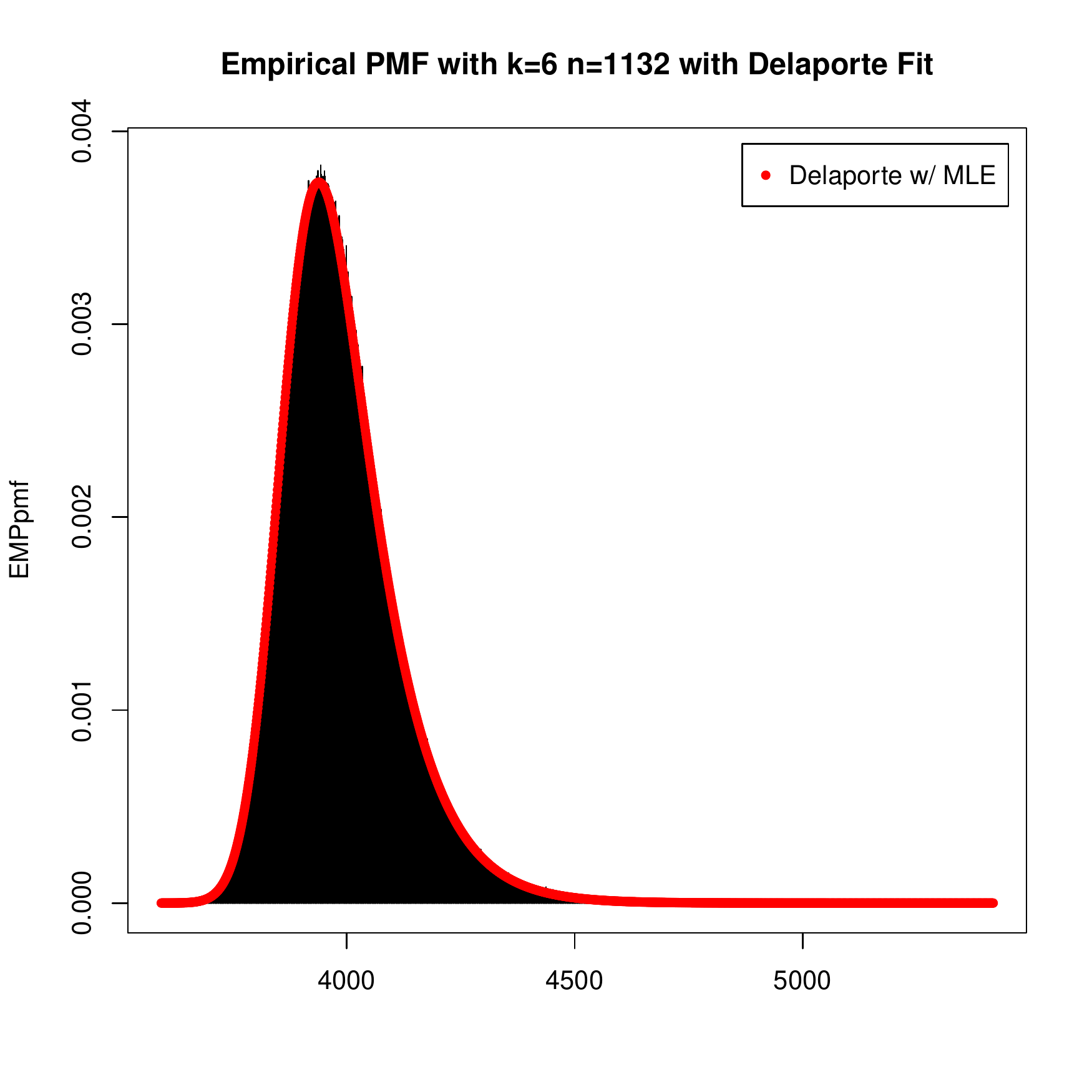} &  			\includegraphics[scale=.21]{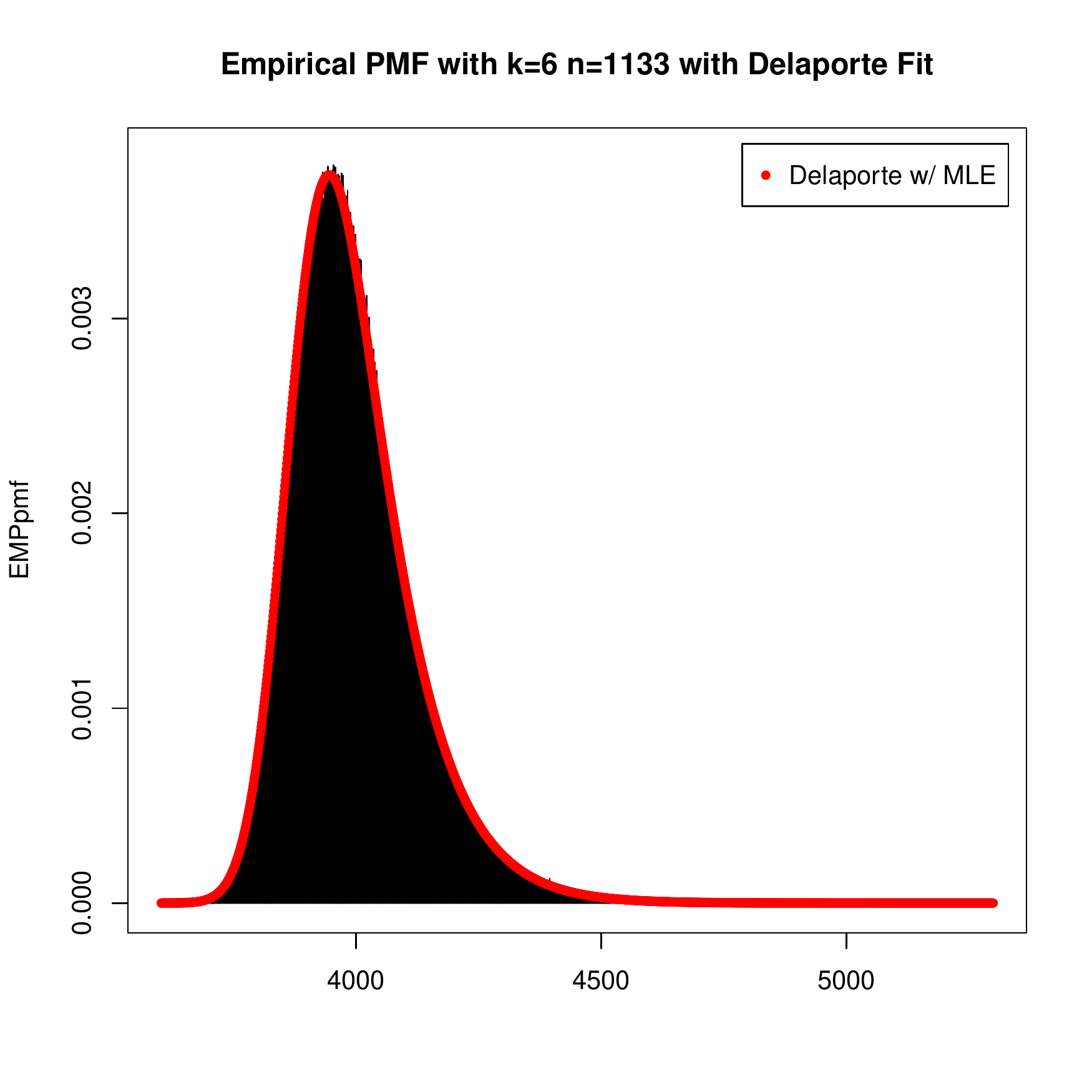} &   \includegraphics[scale=.21]{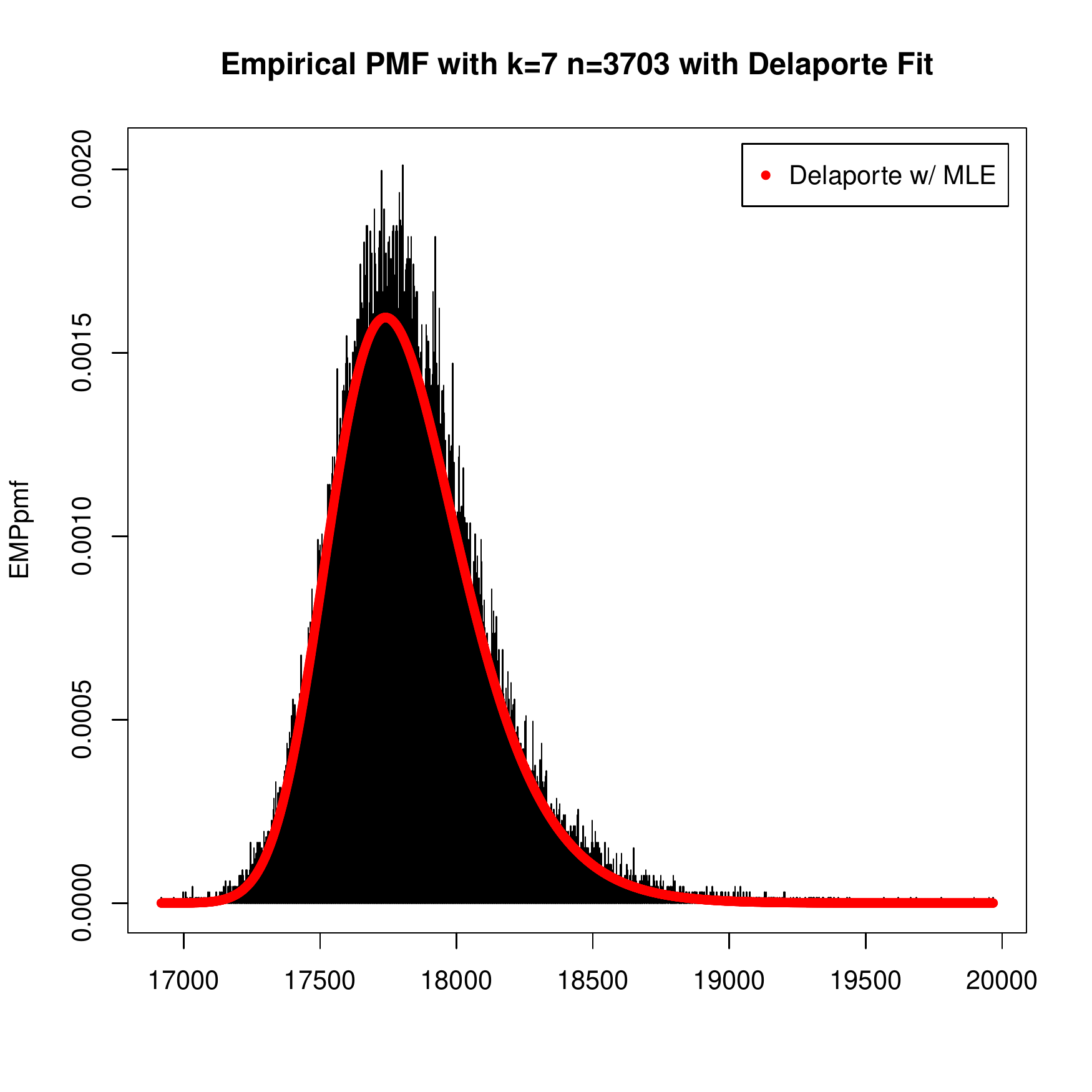} \vspace{-1em}\\ 
			Sample size = 0.89M & Sample size = 0.85M & Sample size = 67K\\ 
		\end{tabular} \normalsize
		\caption{Empirical pmfs for various scenarios with Delaporte Overlay}
	\end{center}
\end{figure}

The Delaporte distribution is, again, an unusually good
approximation, this time for the number of monochromatic arithmetic progressions.
The reader may notice that for the last histogram  in Figures 4 and 5
(with a sample
size of 67K), there are spikes at the peak and that the Delaporte
overlay misses these spikes.  Based on our many simulations (only a 
fraction of which are shown in this article), we find that
these spikes diminish as the sample size increases and that they settle
near the Delaporte peak.  We included this histogram to show what is
expected when the sample size is relatively ``small."

Having both the number of monochromatic complete subgraphs and the number of monochromatic
arithmetic progressions producing such similar empirical
histograms, 
we end with the following question:
\vskip 5pt
\centerline{
Is there a ``Delaporte Paradigm" for Ramsey objects?}


\begin{thebibliography}{30} \footnotesize \parskip=0pt

\bibitem{A} A. Adler, Delaporte: Statistical functions for the Delaporte
distribution, {\tt R} package version 3.0.0, 2016,
{\tt https://CRAN.R-project.org/package=Delaporte}.

\bibitem{Al} P. Allison, Estimation and testing for Markov model
of reinforcement, {\it Sociological Methods and Research} {\bf 8} (1980), 434-453.

\bibitem{AS} N. Alon and J. Spencer, The Probabilistic Method, fourth edition, Wiley, New Jersey, 2015.

\bibitem{G} A. Godbole, D. Skipper, and R. Sunley, The asymptotic lower bound of diagonal Ramsey numbers: a closer look,
{\it Disc. Prob.  Algorithms} {\bf 72} (1995), 81-94.

\bibitem{J} S. Janson, Poisson convergence and poisson processes with applications to random graphs,
{\it Stochastic Processes and their Applications} {\bf 26} (1987), 1-30.

\bibitem{JLR}  S. Janson, T. Luczak, and A. Rucinski, Random Graphs, Wiley-Interscience, New
York, 2000.

\bibitem{JG} D. Joanes  and C. Gill, Comparing measures of sample skewness and kurtosis,
{\it The Statistician} {\bf 47} (1998), 183-189.

\bibitem{K} M. Kouril, The van der Waerden number $W(2,6)$ is 1132, {\it Experiment. Math.} {\bf 17} (2008), 53-61.

\bibitem{Lawless} J. Lawless, Negative binomial and mixed Poisson regression,
{\it Canad. J. Stat.} {\bf 15} (1987), 209-225.

\bibitem{M} A. Marshall and I. Olkin, Bivariate distributions generated
from P\'olya-Eggenberger urn models, {\it J. Multivariate Anal.} {\bf 35} (1990), 48-65.

\bibitem{MR} B. D. McKay and S.P. Radziszowski, R(4,5) = 25, {\it Journal of Graph Theory} {\bf 19} (1995), 309-322.



\bibitem{R}   {\tt R} Core Team (2016). {\tt R}: A language and environment for statistical computing, {\tt R} Foundation for Statistical Computing, Vienna, Austria, {\tt https://www.R-project.org}.

\bibitem{V} G. Venter,   Effects of variations from Gamma-Poisson assumptions, {\it CAS Proceedings} {\bf LXXVIII} (1991), 41-55.

\bibitem{Z} D. Zeilberger, Symbolic moment calculus II: why is Ramsey theory sooooo eeeenormously hard,
{\it Integers} {\bf 7(2)} (2007), \#A34.

\end{thebibliography}
\end{document}